\theoremstyle{plain}
\newtheorem{theorem}{Theorem}[section]
\newtheorem{lemma}[theorem]{Lemma}
\newtheorem{proposition}[theorem]{Proposition}
\newtheorem{corollary}[theorem]{Corollary}
\theoremstyle{definition}
\newtheorem{definition}[theorem]{Definition}
\theoremstyle{remark}
\newtheorem{remark}[theorem]{Remark}
\numberwithin{equation}{section}
\begin{document}

\title{\textbf{Superconductivity\\ and the BCS-Bogoliubov Theory}}

\author{Shuji Watanabe\\
Division of Mathematical Sciences\\
Graduate Schoool of Engineering, Gunma University\\
4-2 Aramaki-machi, Maebashi 371-8510, Japan\\
e-mail: watanabe@fs.aramaki.gunma-u.ac.jp}

\date{}

\maketitle

\begin{abstract}
\noindent First, we reformulate the BCS-Bogoliubov theory of superconductivity from the viewpoint of linear algebra. We define the BCS Hamiltonian on $\mathbb{C}^{2^{2M}}$, where $M$ is a positive integer. We discuss selfadjointness and symmetry of the BCS Hamiltonian as well as spontaneous symmetry breaking. Beginning with the gap equation, we give the well-known expression for the BCS state and find the existence of an energy gap. We also show that the BCS state has a lower energy than the normal state. Second, we introduce a new superconducting state explicitly and show from the viewpoint of linear algebra that this new state has a lower energy than the BCS state. Third, beginning with our new gap equation, we show from the viewpoint of linear algebra that we arrive at the results similar to those in the BCS-Bogoliubov theory.

\medskip

\noindent 2000 Mathematics Subject Classification: 15A90.

\medskip

\noindent Keywords and phrases: Superconductivity, the BCS-Bogoliubov theory, new superconducting state having a lower energy than the BCS state, new gap equation.

\end{abstract}

\section{Introduction}

Superconductivity is one of the historical landmarks in condensed matter physics. In 1911 Onnes found out the experimental fact that the electrical resistivity of mercury drops to zero below the temperature 4.2 K. Later the zero electrical resistivity is observed in many metals and alloys, and such a phenomenon is called superconductivity. The magnetic properties of superconductors as well as their electric properties are also astonishing. For example, the magnetic flux is excluded from the interior of a superconductor. This phenomenon was observed first by Meissner in 1933, and is called the Meissner effect. In 1957 Bardeen, Cooper and Schrieffer \cite{bcs} proposed the highly successful quantum theory, which is called the BCS theory. This theory is based on the idea that there is an attractive interaction between electrons caused by the phonons. Due to this interaction, the electrons form bound pairs. This superconducting state is called the BCS state and the Hamiltonian they dealt with is called the BCS Hamiltonian. Using a canonical transformation Bogoliubov \cite{bogoliubov} obtained the results similar to those in the BCS theory in 1958. This canonical transformation is called the Bogoliubov transformation, and is useful for studying the spectrum of the system of the electrons. This theory is called the Bogoliubov theory. The ground state of the BCS Hamiltonian is discussed by several authors. In 1961 Mattis and Lieb \cite{mattislieb} studied the wavefunction of the ground state of the BCS Hamiltonian under the condition that in the ground state, all the electrons in the neighborhood of the Fermi surface are paired. See Richardson \cite{richardson} and von Delft \cite{delft} for the ground state of the BCS Hamiltonian without the condition just above. From the viewpoint of $C^{\ast}$-algebra, Gerisch and Rieckers \cite{hubbard} studied a class of BCS-models to show that there is a unique $C^{\ast}$-dynamical system for each BCS-model.

Let $L,\, K_{max}>0$ be large enough and let us fix them. For $n_1,\, n_2,\, n_3\in \mathbb{Z}$, set
\[
\Lambda=\left\{ \frac{\,2\pi\,}{L}(n_1,\, n_2,\, n_3)\in\mathbb{R}^3:
\, \frac{\,2\pi\,}{L}\sqrt{n_1^2+n_2^2+n_3^2}\leq K_{max} \right\},
\]
Let the number of all the elements of $\Lambda$ be $M$ and let wave vector $k$ belong to $\Lambda$. The number $n_{k\sigma}$ of electrons with wave vector $k$ and spin $\sigma$ ($\sigma=\uparrow$ (spin up), $\downarrow$ (spin down)) is equal to 0 or 1, and so the number of the states
\begin{equation}\label{eq:state}
| \, n_{k\uparrow},\, n_{k\downarrow},\,n_{k'\uparrow},\,
 n_{k'\downarrow},\,\ldots \rangle,\qquad k,\,k',\,\ldots \in \Lambda
\end{equation}
is equal to $2^{2M}$. Here, $n_{k\uparrow}$,\  $n_{k\downarrow}=0,\,1$, and
the elements $k$ and $k'$ in \eqref{eq:state} are arranged in a certain order.
In condensed matter physics it is known that the norm of each state is equal
to 1 and that any two states are orthogonal to each other. We therefore choose
\[
\mathcal{H}=\mathbb{C}^{2^{2M}}
\]
as our Hilbert space $\mathcal{H}$, and denote each standard unit vector
in $\mathcal{H}=\mathbb{C}^{2^{2M}}$:
\[
e_i=(0,\,\ldots,\,0,\,\overset{i}{\widehat{1}},\,0,\,\ldots,\,0),\qquad
i=1,\,2,\,\ldots,\,2^{2M}
\]
by each state \eqref{eq:state} for simplicity. For example, we denote $e_1=(1,\,0,\,0,\ldots,\,0)$ and $e_2=(0,\,1,\,0,\ldots,\,0)$ by $| 0,\,0,\,0,\,\ldots \rangle$ and by $| 1,\,0,\,0,\,\ldots \rangle$, respectively. Moreover, we denote $e_{2^{2M}}=(0,\,0,\,\ldots,\,0,\,1)$ by $| 1,\,1,\,1,\,\ldots \rangle$. Here the symbol $| 0,\, 0,\,0,\,\ldots \rangle$ corresponds to the state $n_{k\uparrow}=n_{k\downarrow}=0$ for all $k\in \Lambda$, and $| 1,\, 0,\,0,\,\ldots \rangle$ to the state $n_{k\uparrow}=1$ and $n_{k\downarrow}=n_{k'\sigma}=0$ for all $k' \in \Lambda\setminus \{ k\}$ and for all $\sigma=\uparrow,\,\downarrow$. Moreover, $| 1,\,1,\,1,\,\ldots \rangle$ corresponds to the state $n_{k\uparrow}=n_{k\downarrow}=1$ for all $k\in \Lambda$. \quad We abbreviate $| 0,\, 0,\,0,\,\ldots \rangle$ to $| 0 \rangle$ and call it the vacuum vector in $\mathcal{H}=\mathbb{C}^{2^{2M}}$. We denote by $\left( \cdot\, ,\,\cdot\right)$ the inner product of $\mathcal{H}=\mathbb{C}^{2^{2M}}$.

The paper proceeds as follows. In section 2 we define the BCS Hamiltonian on $\mathcal{H}=\mathbb{C}^{2^{2M}}$. We discuss selfadjointness and symmetry of the BCS Hamiltonian as well as spontaneous symmetry breaking \cite{nambujona}. In section 3, beginning with the ``gap equation" (\cite{bcs}, \cite{bogoliubov}), we give the well-known expression for the BCS state and find the existence of an energy gap for excitation from the BCS state. In section 4 we obtain an expression for the energy difference between the BCS and normal states, and show that the BCS state has a lower energy than the normal state. In section 5 we introduce a new superconducting state explicitly and show that this new state has a lower energy than the BCS state, and hence than the normal state. On the basis of the results above we introduce a new gap equation in section 6. Beginning with our new gap equation we arrive at the results similar to those in section 3, i.e., to those in the BCS-Bogoliubov theory.

\section{The BCS Hamiltonian and spontaneous symmetry breaking}

In this section we define the BCS Hamiltonian on $\mathcal{H}=\mathbb{C}^{2^{2M}}$. We then discuss selfadjointness and symmetry of the BCS Hamiltonian as well as spontaneous symmetry breaking.

We assume that each creation operator and each annihilation operator depend only on wave vector $k\in\Lambda$ and on spin $\sigma$ of an electron. We denote the creation operator (resp. the annihilation operator) by $C_{k\sigma}^{\ast}$ (resp. by $C_{k\sigma}$). Note that $\displaystyle{ |\ldots,\, n_{k\uparrow},\, n_{k\downarrow},\,\ldots \rangle }$ \quad $(n_{k\uparrow},\, n_{k\downarrow}=0,\,1)$ stands for the corresponding standard unit vector in $\mathcal{H}=\mathbb{C}^{2^{2M}}$, as mentioned in the preceding section.

\begin{definition}
\[
\left\{ \begin{array}{ll} \displaystyle{
C_{k\uparrow}| \ldots,\, n_{k\uparrow},\, n_{k\downarrow},\,\ldots \rangle=
(-1)^{\sharp} \delta_{1,\,n_{k\uparrow}}| \ldots,\, n_{k\uparrow}-1,\,
 n_{k\downarrow},\,\ldots \rangle,} & \\
\noalign{ \vskip0.3cm }
\displaystyle{
C_{k\uparrow}^{\ast}| \ldots,\, n_{k\uparrow},\, n_{k\downarrow},\,\ldots
 \rangle=(-1)^{\sharp} \delta_{0,\,n_{k\uparrow}}| \ldots,\,
  n_{k\uparrow}+1,\, n_{k\downarrow},\,\ldots \rangle,} &
\end{array}\right.
\]
where the symbol $\sharp$ denotes the number of electrons arranged at the left of the symbol $n_{k\uparrow}$ above.
\[
\left\{ \begin{array}{ll} \displaystyle{
C_{k\downarrow}| \ldots,\, n_{k\uparrow},\, n_{k\downarrow},\,\ldots \rangle=
(-1)^{\sharp\sharp} \delta_{1,\,n_{k\downarrow}}| \ldots,\, n_{k\uparrow},\,
 n_{k\downarrow}-1,\,\ldots \rangle, } & \\
\noalign{ \vskip0.3cm }
\displaystyle{
C_{k\downarrow}^{\ast}| \ldots,\, n_{k\uparrow},\, n_{k\downarrow},\,\ldots
 \rangle=(-1)^{\sharp\sharp} \delta_{0,\,n_{k\downarrow}}| \ldots,\,
  n_{k\uparrow},\, n_{k\downarrow}+1,\,\ldots \rangle,} &
\end{array}\right.
\]
where the symbol $\sharp\sharp$ denotes the number of electrons arranged at the left of the symbol $n_{k\downarrow}$ above.
\end{definition}

On the basis of the definition we regard each of the creation and annihilation operators as a linear operator on $\mathcal{H}=\mathbb{C}^{2^{2M}}$. The definition immediately gives the following two lemmas.

\begin{lemma}\label{lm:blo}
The annihilation operator $C_{k\sigma}$ is a bounded linear operator on $\mathcal{H}=\mathbb{C}^{2^{2M}}$, and its adjoint operator coincides with the creation operator $C_{k\sigma}^{\ast}$. Moreover,
\[
C_{k\sigma}| 0 \rangle=0,\qquad C_{k\sigma}^{\ast}| \,1,\,1,\,1,\,\ldots \rangle=0,\qquad k\in \Lambda,\quad \sigma=\uparrow,\,\downarrow\,.
\]
\end{lemma}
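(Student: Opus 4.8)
The plan is to treat the three assertions separately, exploiting throughout that $\mathcal{H}=\mathbb{C}^{2^{2M}}$ is finite-dimensional. First, boundedness of $C_{k\sigma}$ is automatic: the definition prescribes the action of $C_{k\sigma}$ on each standard unit vector $e_i$, and extending by linearity yields a linear operator on the finite-dimensional space $\mathcal{H}$, hence a bounded one (in fact, since each $e_i$ is sent to $\pm e_j$ or to $0$, one reads off $\|C_{k\sigma}\|\le 1$). The two displayed identities are immediate from the Kronecker deltas: in $|0\rangle$ one has $n_{k\sigma}=0$ for every $k\in\Lambda$, so $\delta_{1,n_{k\sigma}}=0$ and $C_{k\sigma}|0\rangle=0$; in $|\,1,1,1,\ldots\rangle$ one has $n_{k\sigma}=1$ for every $k\in\Lambda$, so $\delta_{0,n_{k\sigma}}=0$ and $C_{k\sigma}^{\ast}|\,1,1,1,\ldots\rangle=0$.

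For the adjoint, since $\mathcal{H}$ is finite-dimensional it suffices to verify $(C_{k\sigma}e_i,\,e_j)=(e_i,\,C_{k\sigma}^{\ast}e_j)$ for every pair of standard unit vectors $e_i,e_j$ — equivalently, that the matrix of $C_{k\sigma}^{\ast}$ in the standard basis is the (conjugate) transpose of the matrix of $C_{k\sigma}$. I would carry out the case $\sigma=\uparrow$ in detail, the case $\sigma=\downarrow$ being identical with $\sharp\sharp$ in place of $\sharp$. Writing $e_i=|\ldots,n_{k\uparrow},n_{k\downarrow},\ldots\rangle$ and $e_j=|\ldots,n'_{k\uparrow},n'_{k\downarrow},\ldots\rangle$, both inner products vanish unless $e_i$ and $e_j$ coincide in every occupation slot except the $n_{k\uparrow}$ slot, where $n_{k\uparrow}=1$ and $n'_{k\uparrow}=0$. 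In that case the definition gives $C_{k\uparrow}e_i=(-1)^{\sharp}e_j$ and $C_{k\uparrow}^{\ast}e_j=(-1)^{\sharp'}e_i$, so the two sides equal $(-1)^{\sharp}$ and $(-1)^{\sharp'}$ respectively, where $\sharp$ (resp.\ $\sharp'$) is the number of electrons to the left of the $n_{k\uparrow}$ slot in $e_i$ (resp.\ $e_j$). Since $e_i$ and $e_j$ agree in every slot lying to the left of that one, $\sharp=\sharp'$, and the identity follows; in every other configuration both sides are $0$.

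The only delicate point — and the step I would be most careful about — is exactly this sign bookkeeping: one must check that the parity $(-1)^{\sharp}$ attached by $C_{k\uparrow}$ to $e_i$ agrees with the parity $(-1)^{\sharp'}$ attached by $C_{k\uparrow}^{\ast}$ to $e_j$. This works precisely because lowering $n_{k\uparrow}$ from $1$ to $0$ changes only that single slot, leaving unchanged all occupation numbers — and hence the electron count — to its left. Beyond this, the argument reduces to the vanishing of $\delta_{1,0}$ and $\delta_{0,1}$ and to finite-dimensionality, so no genuine estimates are involved.
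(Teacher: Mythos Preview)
Your argument is correct and is exactly the verification the paper has in mind: the paper itself gives no proof of this lemma beyond the sentence ``The definition immediately gives the following two lemmas,'' and your write-up simply spells out that immediate verification (boundedness from finite-dimensionality, the adjoint identity by matching matrix entries and checking the sign $\sharp=\sharp'$, and the vacuum identities from the Kronecker deltas).
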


\begin{lemma}\label{lm:cacr}
The operators $C_{k\sigma}$ and $C_{k\sigma}^{\ast}$ satisfy the canonical anticommutation relations on $\mathcal{H}=\mathbb{C}^{2^{2M}}$:
\[
\left\{ C_{k\sigma},\, C_{k'\sigma'}^{\ast} \right\}=\delta_{k k'}
\delta_{\sigma \sigma'},\qquad \left\{ C_{k\sigma},\, C_{k'\sigma'}
 \right\}=\left\{ C_{k\sigma}^{\ast},\, C_{k'\sigma'}^{\ast} \right\}=0,
\]
where $\{ A,\,B \}=AB+BA$.
\end{lemma}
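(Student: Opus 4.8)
The plan is to verify each of the stated identities by evaluating both sides on an arbitrary standard basis vector $|\, n_{k\uparrow},\, n_{k\downarrow},\, n_{k'\uparrow},\, n_{k'\downarrow},\,\ldots\rangle$ of $\mathcal{H}=\mathbb{C}^{2^{2M}}$; since every operator involved is linear and $\mathcal{H}$ is finite dimensional, this suffices. The guiding remark is that, by the definition, each application of a creation or annihilation operator to a basis vector produces a Kronecker-delta factor recording whether the move is admissible together with a sign $(-1)^{\sharp}$ recording the number of occupied modes arranged to the left of the affected slot.

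First I would treat the same-mode relation $\{C_{k\sigma},\, C_{k\sigma}^{\ast}\}=1$. Fixing a basis vector, split into the cases $n_{k\sigma}=0$ and $n_{k\sigma}=1$. If $n_{k\sigma}=0$, then $C_{k\sigma}$ annihilates the vector, so $C_{k\sigma}^{\ast}C_{k\sigma}$ contributes $0$, whereas $C_{k\sigma}C_{k\sigma}^{\ast}$ first raises the slot to $1$ and then lowers it back to $0$; since only the slot $(k,\sigma)$ itself is altered, the two factors $(-1)^{\sharp}$ are computed with the same left-count and hence multiply to $1$, so $C_{k\sigma}C_{k\sigma}^{\ast}$ returns the original vector. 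If $n_{k\sigma}=1$, the roles of $C_{k\sigma}$ and $C_{k\sigma}^{\ast}$ are interchanged and the same computation gives $C_{k\sigma}^{\ast}C_{k\sigma}$ equal to the original vector and $C_{k\sigma}C_{k\sigma}^{\ast}$ equal to $0$. In both cases the sum is the original vector, so $\{C_{k\sigma},\, C_{k\sigma}^{\ast}\}$ is the identity on $\mathcal{H}$. Next, a second application of $C_{k\sigma}$ (resp. $C_{k\sigma}^{\ast}$) always meets the delta factor $\delta_{1,0}$ (resp. $\delta_{0,1}$) and therefore vanishes, so $\{C_{k\sigma},\, C_{k\sigma}\}=2C_{k\sigma}^{2}=0$ and likewise $\{C_{k\sigma}^{\ast},\, C_{k\sigma}^{\ast}\}=0$.

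For the remaining relations we may assume $(k,\sigma)\neq(k',\sigma')$, and here lies the crux. Applying $C_{k\sigma}$ and either $C_{k'\sigma'}$ or $C_{k'\sigma'}^{\ast}$ to a basis vector in the two possible orders, one checks that the admissibility (delta) conditions that must hold are the same in both orders, since the two operators act on distinct slots and hence do not interfere with each other's occupation requirements; so if either condition fails, both terms are $0$. When both conditions hold, the two orderings yield the same final basis vector, and it remains to compare signs. Say the slot $(k',\sigma')$ lies to the left of $(k,\sigma)$ in the fixed ordering of modes. Applying the $(k',\sigma')$-operator first changes the occupation of that slot by $\pm 1$, hence changes by one the number $\sharp$ of electrons counted to the left of the slot $(k,\sigma)$; applying the $(k,\sigma)$-operator first does not affect the count relevant to the $(k',\sigma')$-operator, since $(k,\sigma)$ lies to its right. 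Thus one ordering carries $(-1)^{\sharp}$ and the other $(-1)^{\sharp+1}$, so the two terms cancel; if instead $(k,\sigma)$ lies to the left of $(k',\sigma')$, the roles reverse but one still obtains exactly one relative sign flip. Therefore $\{C_{k\sigma},\, C_{k'\sigma'}^{\ast}\}=0$, $\{C_{k\sigma},\, C_{k'\sigma'}\}=0$, and $\{C_{k\sigma}^{\ast},\, C_{k'\sigma'}^{\ast}\}=0$ for $(k,\sigma)\neq(k',\sigma')$, which together with the same-mode computations establishes all the stated relations. The only delicate point is the careful bookkeeping of the left-count $\sharp$ under a change of order, which is precisely what forces anticommutators rather than commutators.
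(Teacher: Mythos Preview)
Your proof is correct and is precisely the kind of direct verification the paper has in mind: the paper offers no argument beyond the remark that the definition ``immediately gives'' the lemma, and your case-by-case check on basis vectors---with the sign bookkeeping on the left-count $\sharp$---is exactly how one unpacks that claim.
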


\begin{remark}
\[
| \, n_{k\uparrow},\, n_{k\downarrow},\,n_{k'\uparrow},\, n_{k'\downarrow},\,\ldots \rangle=\left( C_{k\uparrow}^{\ast}\right)^{n_{k\uparrow}}\left( C_{k\downarrow}^{\ast}\right)^{n_{k\downarrow}}\left( C_{k'\uparrow}^{\ast}\right)^{n_{k'\uparrow}}\left( C_{k'\downarrow}^{\ast}\right)^{n_{k'\downarrow}} \,\cdots | 0 \rangle.
\]
\end{remark}

Let $m$ and $\mu$ stand for the electron mass and the chemical potential, respectively. Here, $m,\,\mu>0$. Set $\xi_k=\hslash^2|k|^2/(2m)-\mu$. The BCS Hamiltonian \cite{bcs} is given by
\begin{equation}\label{eq:hamiltonian}
H=\sum_{k\in\Lambda,\,\sigma=\uparrow,\,\downarrow}\xi_k\,
 C_{k\,\sigma}^{\ast}C_{k\,\sigma}+\sum_{k,\,k'\in\Lambda}U_{k,\,k'}
 C_{k'\uparrow}^{\ast}C_{-k'\downarrow}^{\ast}C_{-k\downarrow}
 C_{k\uparrow}\,.
\end{equation}
Here, $U_{k,\,k'}$ is a function of $k$ and $k'$, and satisfies $U_{k,\,k'}\leq 0$, $U_{k',\,k}=U_{k,\,k'}$, $U_{-k,\,-k'}=U_{k,\,k'}$ and $U_{k,\,k}=0$.

Lemma \ref{lm:blo} immediately yields the following.

\begin{proposition}
The BCS Hamiltonian $H$ is a bounded, selfadjoint operator on $\mathcal{H}=\mathbb{C}^{2^{2M}}$.
\end{proposition}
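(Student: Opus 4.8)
The plan is to prove the two assertions separately, each as a direct consequence of Lemmas \ref{lm:blo} and \ref{lm:cacr} together with the reality and symmetry properties of the coefficients $\xi_k$ and $U_{k,\,k'}$. Note first that $\xi_k=\hslash^2|k|^2/(2m)-\mu\in\mathbb{R}$ since $m,\mu>0$, and $U_{k,\,k'}\in\mathbb{R}$ since $U_{k,\,k'}\leq 0$; these facts will be used when computing adjoints.

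For boundedness: the index set $\Lambda$ is finite with $M$ elements, so $H$ in \eqref{eq:hamiltonian} is a finite linear combination of finite products of the operators $C_{k\sigma}$ and $C_{k\sigma}^{\ast}$. By Lemma \ref{lm:blo} each such factor is a bounded linear operator on $\mathcal{H}=\mathbb{C}^{2^{2M}}$; since products and finite sums of bounded operators are bounded, $H$ is bounded. (One may alternatively simply observe that every linear operator on the finite-dimensional space $\mathbb{C}^{2^{2M}}$ is automatically bounded.)

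For selfadjointness I would take the adjoint of $H$ term by term, using $(AB)^{\ast}=B^{\ast}A^{\ast}$, the identities $(C_{k\sigma})^{\ast}=C_{k\sigma}^{\ast}$ and $(C_{k\sigma}^{\ast})^{\ast}=C_{k\sigma}$ from Lemma \ref{lm:blo}, and $\overline{\xi_k}=\xi_k$, $\overline{U_{k,\,k'}}=U_{k,\,k'}$. The kinetic term $\sum_{k,\sigma}\xi_k\,C_{k\sigma}^{\ast}C_{k\sigma}$ is visibly of the form $\sum\xi_k A_k^{\ast}A_k$ with $\xi_k$ real, hence selfadjoint. For the interaction term one computes $\bigl(U_{k,\,k'}\,C_{k'\uparrow}^{\ast}C_{-k'\downarrow}^{\ast}C_{-k\downarrow}C_{k\uparrow}\bigr)^{\ast}=U_{k,\,k'}\,C_{k\uparrow}^{\ast}C_{-k\downarrow}^{\ast}C_{-k'\downarrow}C_{k'\uparrow}$; summing over $k,k'\in\Lambda$ and then interchanging the names of the two summation indices $k\leftrightarrow k'$ turns this into $\sum_{k,k'}U_{k',\,k}\,C_{k'\uparrow}^{\ast}C_{-k'\downarrow}^{\ast}C_{-k\downarrow}C_{k\uparrow}$, which equals the original interaction term because $U_{k',\,k}=U_{k,\,k'}$. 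Therefore $H^{\ast}=H$.

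I do not anticipate any substantial obstacle; the only point needing a little care is bookkeeping — keeping track of the order-reversal of the four-operator product under the adjoint and carrying out the reindexing of the double sum that the hypothesis $U_{k',\,k}=U_{k,\,k'}$ is precisely tailored to absorb. The canonical anticommutation relations of Lemma \ref{lm:cacr} are not needed here, although they would become relevant if one wished to express $H$ in a normal-ordered form.
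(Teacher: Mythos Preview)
Your proof is correct and follows the same approach the paper intends: the paper simply states that the proposition follows immediately from Lemma~\ref{lm:blo}, and your argument is precisely the unpacking of that claim, correctly invoking the reality of $\xi_k$, $U_{k,\,k'}$ and the symmetry $U_{k',\,k}=U_{k,\,k'}$ for the selfadjointness of the interaction term. You provide considerably more detail than the paper, but the underlying idea is the same.
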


The bounded, selfadjoint operator
\begin{equation}\label{eq:gnrtr}
G=\sum_{k\in\Lambda,\,\sigma=\uparrow,\,\downarrow} C_{k\,\sigma}^{\ast}
C_{k\,\sigma}
\end{equation}
generates a strongly continuous unitary group $\displaystyle{\left\{ e^{i\,\alpha\,G}\right\}}_{\alpha\in\mathbb{R}}$ on $\mathcal{H}=\mathbb{C}^{2^{2M}}$. As is shown in Proposition \ref{prp:phase}, the transformation $\displaystyle{e^{i\,\alpha\,G}}$ gives rise to a phase transformation of the creation (the annihilation) operator.

A straightforward calculation gives the following.

\begin{lemma}\label{lm:formula}
Let $A$ be a bounded linear operator on a Hilbert space $\mathcal{H}$ and $B$ a bounded, selfadjoint operator on $\mathcal{H}$. Let $\alpha\in\mathbb{R}$. Then, for $f\in\mathcal{H}$,
\begin{equation*}
\sum_{n=0}^N \frac{\, \left( i\alpha\right)^n\,}{n!}
\overbrace{ [\,[\,\ldots\, [\, [ }^{n} \,A,\,\overbrace{B\,],\,B\,], \ldots,\,B\,],\, B\,]}^{n}\, f \longrightarrow e^{-i\,\alpha\,B}A\, e^{i\,\alpha\,B}f
\qquad \mbox{in} \;\;\mathcal{H}.
\end{equation*}
Here, $[A,\, B]=AB-BA$.
\end{lemma}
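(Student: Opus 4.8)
The plan is to interpret the $n$-fold nested bracket as the $n$-th iterate of the bounded linear map $\mathcal{D}\colon T\mapsto[T,B]$ acting on the Banach space $\mathcal{B}(\mathcal{H})$ of bounded operators, and to prove the stronger operator-norm identity
\[
e^{-i\alpha B}A\,e^{i\alpha B}=\sum_{n=0}^{\infty}\frac{(i\alpha)^n}{n!}\,\mathcal{D}^n(A).
\]
Once this is established, the conclusion of the lemma is immediate: applying both sides to $f$ and using $\|Tf\|\le\|T\|\,\|f\|$ turns norm convergence of the partial sums into convergence in $\mathcal{H}$.

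First I would verify that the series on the right converges absolutely in operator norm. Since $B$ is bounded, $\|\mathcal{D}(T)\|\le 2\|B\|\,\|T\|$, hence $\|\mathcal{D}^n(A)\|\le(2\|B\|)^n\|A\|$, and $\sum_{n\ge0}\frac{|\alpha|^n}{n!}\|\mathcal{D}^n(A)\|\le e^{2|\alpha|\,\|B\|}\|A\|<\infty$; so the right-hand side defines an element $S(\alpha)\in\mathcal{B}(\mathcal{H})$ and the partial sums converge to it in norm. To identify $S(\alpha)$ with $\Phi(\alpha):=e^{-i\alpha B}A\,e^{i\alpha B}$ --- this being the ``straightforward calculation'' alluded to --- I would expand both exponentials, which is legitimate because $B$ is bounded, to obtain
\[
\Phi(\alpha)=\sum_{p,q\ge0}\frac{(-i\alpha)^p(i\alpha)^q}{p!\,q!}\,B^pAB^q,
\]
a doubly-indexed series whose norm is dominated by $e^{2|\alpha|\,\|B\|}\|A\|$, hence absolutely convergent and so freely regrouped by the total degree $n=p+q$. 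Because left and right multiplication by $B$ commute on $\mathcal{B}(\mathcal{H})$, one has $\mathcal{D}^n(A)=\sum_{p+q=n}\binom{n}{p}(-1)^pB^pAB^q$, and since $(-1)^p i^{\,n}=(-i)^p i^{\,q}$ whenever $p+q=n$, the degree-$n$ block of the regrouped series is precisely $\frac{(i\alpha)^n}{n!}\,\mathcal{D}^n(A)$. Summing over $n$ gives $\Phi(\alpha)=S(\alpha)$, as desired. An alternative is to observe that $\Phi$ and $S$ are both norm-differentiable in $\alpha$, both solve the linear initial value problem $Y'(\alpha)=i\,\mathcal{D}(Y(\alpha))$, $Y(0)=A$, in the Banach space $\mathcal{B}(\mathcal{H})$ --- for $\Phi$ by differentiating the product and using $\tfrac{d}{d\alpha}e^{i\alpha B}=iBe^{i\alpha B}$, for $S$ by termwise differentiation of a locally uniformly convergent series --- and then to invoke uniqueness.

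I do not expect a genuine obstacle here: the only point requiring attention is the interchange of summations, and this is licensed throughout by the absolute norm convergence that boundedness of $B$ supplies. I would also note that in the setting of this paper $\mathcal{H}=\mathbb{C}^{2^{2M}}$ is finite-dimensional, so the convergence questions are automatic; the argument above is merely phrased so as to cover an arbitrary Hilbert space.
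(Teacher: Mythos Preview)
Your argument is correct; the paper does not actually prove this lemma but merely states that ``a straightforward calculation gives the following,'' and what you have written is precisely that calculation carried out in full. The expansion of the double exponential followed by regrouping by total degree (or the equivalent ODE argument) is the standard route, and since in the paper's setting $\mathcal{H}$ is finite-dimensional all convergence issues are indeed automatic.
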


\begin{proposition}\label{prp:phase} Let $G$ be as in \eqref{eq:gnrtr} and
$H$ as in \eqref{eq:hamiltonian}. Then, for $\alpha\in\mathbb{R}$, 
\[
e^{-i\,\alpha\,G}C_{k\,\sigma}\, e^{i\,\alpha\,G}=e^{i\,\alpha}C_{k\,\sigma}\,,
\qquad e^{-i\,\alpha\,G}C_{k\,\sigma}^{\ast}\, e^{i\,\alpha\,G}=e^{-i\,\alpha}
C_{k\,\sigma}^{\ast}\,.
\]
Consequently, \quad $\displaystyle{ e^{-i\,\alpha\,G}H\, e^{i\,\alpha\,G}=H}$.
\end{proposition}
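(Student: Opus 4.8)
The plan is to compute the conjugation $e^{-i\alpha G} C_{k\sigma} e^{i\alpha G}$ using Lemma \ref{lm:formula} with $A = C_{k\sigma}$ and $B = G$, and similarly for $C_{k\sigma}^{\ast}$. The key point is to evaluate the iterated commutators $[\,[\ldots[C_{k\sigma},G],G],\ldots,G\,]$ ($n$ nested brackets). First I would establish the base commutation relation $[C_{k\sigma}, G] = -C_{k\sigma}$ (and correspondingly $[C_{k\sigma}^{\ast}, G] = C_{k\sigma}^{\ast}$). This follows from Lemma \ref{lm:cacr}: writing $G = \sum_{k',\sigma'} C_{k'\sigma'}^{\ast} C_{k'\sigma'}$, one has $C_{k\sigma} C_{k'\sigma'}^{\ast} C_{k'\sigma'} = (\delta_{kk'}\delta_{\sigma\sigma'} - C_{k'\sigma'}^{\ast}C_{k\sigma})C_{k'\sigma'}$, and using $\{C_{k\sigma}, C_{k'\sigma'}\} = 0$ the only surviving term after forming the commutator is $-C_{k\sigma}$.

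Next I would iterate: since $[C_{k\sigma}, G] = -C_{k\sigma}$, an easy induction shows the $n$-fold nested commutator equals $(-1)^n C_{k\sigma}$. Substituting into the series in Lemma \ref{lm:formula},
\[
e^{-i\alpha G} C_{k\sigma} e^{i\alpha G} = \sum_{n=0}^{\infty} \frac{(i\alpha)^n}{n!} (-1)^n C_{k\sigma} = \sum_{n=0}^{\infty} \frac{(-i\alpha)^n}{n!} C_{k\sigma} = e^{-i\alpha} C_{k\sigma}.
\]
Wait --- I should be careful about the sign convention: the exponent in Lemma \ref{lm:formula} is $e^{-i\alpha B} A e^{i\alpha B}$ and the series coefficient is $(i\alpha)^n/n!$ times the $n$-fold bracket $[\,\ldots[A,B],\ldots,B]$, so with $[A,B]\mapsto -A$ each step the sum is $\sum (i\alpha)^n(-1)^n/n! \cdot C_{k\sigma} = e^{-i\alpha}C_{k\sigma}$. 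Let me recheck against the claimed identity $e^{-i\alpha G}C_{k\sigma}e^{i\alpha G} = e^{i\alpha}C_{k\sigma}$: that would require $[C_{k\sigma},G] = +C_{k\sigma}$. So the base relation must come out with the opposite sign, i.e. $[C_{k\sigma}, G] = C_{k\sigma}$; I would redo the anticommutator bookkeeping carefully — $C_{k\sigma}(C_{k\sigma}^{\ast}C_{k\sigma}) = (1 - C_{k\sigma}^{\ast}C_{k\sigma})C_{k\sigma} = C_{k\sigma} - C_{k\sigma}^{\ast}C_{k\sigma}C_{k\sigma} = C_{k\sigma}$ (the last term vanishes by $C_{k\sigma}^2 = 0$), while $(C_{k\sigma}^{\ast}C_{k\sigma})C_{k\sigma} = 0$, so indeed $[C_{k\sigma},G] = C_{k\sigma}$. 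Then the $n$-fold bracket is just $C_{k\sigma}$ for all $n$, and the sum gives $e^{i\alpha}C_{k\sigma}$ as claimed. The adjoint relation follows either by the same computation with $[C_{k\sigma}^{\ast},G] = -C_{k\sigma}^{\ast}$, or simply by taking adjoints of the first identity and using selfadjointness of $G$.

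For the final assertion $e^{-i\alpha G} H e^{i\alpha G} = H$, I would insert $e^{i\alpha G} e^{-i\alpha G} = I$ between every factor in each monomial of $H$ from \eqref{eq:hamiltonian}. The kinetic term $\xi_k C_{k\sigma}^{\ast} C_{k\sigma}$ picks up $e^{-i\alpha} e^{i\alpha} = 1$, and the interaction term $U_{k,k'} C_{k'\uparrow}^{\ast} C_{-k'\downarrow}^{\ast} C_{-k\downarrow} C_{k\uparrow}$ picks up $e^{-i\alpha} e^{-i\alpha} e^{i\alpha} e^{i\alpha} = 1$; hence $H$ is invariant. The only genuine obstacle is the careful sign/ordering bookkeeping in establishing the base commutator $[C_{k\sigma},G] = C_{k\sigma}$ from the anticommutation relations — in particular remembering that $C_{k\sigma}^2 = 0$ kills the apparently problematic term — and matching the sign convention of Lemma \ref{lm:formula}; everything after that is a mechanical induction and geometric-series summation.
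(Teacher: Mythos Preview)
Your approach is correct and is essentially identical to the paper's: compute $[C_{k\sigma},G]=C_{k\sigma}$ from the canonical anticommutation relations (Lemma~\ref{lm:cacr}) and then invoke Lemma~\ref{lm:formula} to sum the exponential series. Your initial sign slip is self-corrected, and the remaining details (iterated commutator, adjoint case, invariance of $H$) are spelled out more fully than in the paper's terse proof but follow the same line.
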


\begin{proof}
By Lemma \ref{lm:cacr}, $\displaystyle{
\left[C_{k\,\sigma},\, G\right]=\sum_{k'\in\Lambda,\,\sigma'=\uparrow,\,\downarrow} \left[C_{k\,\sigma},\, C_{k'\,\sigma'}^{\ast}C_{k'\,\sigma'}\right]=C_{k\,\sigma} }$. The result thus follows from Lemma \ref{lm:formula}.
\end{proof}

\begin{remark}
In other words, the equality $[G,\, H]=0$ holds on $\mathcal{H}=\mathbb{C}^{2^{2M}}$, as is checked directly. Proposition \ref{prp:phase} implies that the transformation $e^{i\,\alpha\,G}$ leaves the BCS Hamiltonian $H$ invariant. In this case the BCS Hamiltonian $H$ is said to have global U(1) symmetry.
\end{remark}

\begin{definition}\label{dfn:ssb}
Let $G$ be as in \eqref{eq:gnrtr}. Suppose that there is the ground state $\Psi_0\in\mathcal{H}=\mathbb{C}^{2^{2M}}$ of the BCS Hamiltonian $H$. The global U(1) symmetry is said to be spontaneously broken if there is a bounded linear operator $A$ on $\mathcal{H}=\mathbb{C}^{2^{2M}}$ satisfying
\[
\left( \Psi_0,\, \left[ \, G,\, A\right]\Psi_0\right)\not=0.
\]
\end{definition}

Lemma \ref{lm:cacr} immediately gives the following.

\begin{lemma}\label{lm:ssblemma} Set $A=C_{-k\downarrow}C_{k\uparrow}$ in Definition
\ref{dfn:ssb}. Then
\[
\left( \Psi_0,\, \left[ \, G,\, C_{-k\downarrow}C_{k\uparrow} \right]\Psi_0\right)
=-2\left( \Psi_0,\, C_{-k\downarrow}C_{k\uparrow}\Psi_0\right).
\]
\end{lemma}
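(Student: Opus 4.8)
The plan is to establish the operator identity $[\,G,\,C_{-k\downarrow}C_{k\uparrow}\,]=-2\,C_{-k\downarrow}C_{k\uparrow}$ on all of $\mathcal{H}=\mathbb{C}^{2^{2M}}$ and then simply pair both sides with $\Psi_0$. Everything here is an equality of bounded linear operators (note that $-k\in\Lambda$ whenever $k\in\Lambda$, so $C_{-k\downarrow}$ is defined), hence no domain considerations enter.

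First I would record, from the commutator computation carried out in the proof of Proposition \ref{prp:phase}, that $\left[C_{k'\sigma'},\,G\right]=C_{k'\sigma'}$ for every $k'\in\Lambda$ and $\sigma'\in\{\uparrow,\downarrow\}$; equivalently $\left[G,\,C_{k'\sigma'}\right]=-C_{k'\sigma'}$. In particular $\left[G,\,C_{-k\downarrow}\right]=-C_{-k\downarrow}$ and $\left[G,\,C_{k\uparrow}\right]=-C_{k\uparrow}$.

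Next I would apply the Leibniz (derivation) rule for commutators, $[G,\,AB]=[G,\,A]B+A[G,\,B]$, valid for bounded operators, with $A=C_{-k\downarrow}$ and $B=C_{k\uparrow}$:
\[
\left[\,G,\,C_{-k\downarrow}C_{k\uparrow}\,\right]
=\left[G,\,C_{-k\downarrow}\right]C_{k\uparrow}+C_{-k\downarrow}\left[G,\,C_{k\uparrow}\right]
=-C_{-k\downarrow}C_{k\uparrow}-C_{-k\downarrow}C_{k\uparrow}
=-2\,C_{-k\downarrow}C_{k\uparrow}.
\]
Taking the inner product of $\Psi_0$ with the action of each side on $\Psi_0$ then gives the asserted equality.

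I expect no real obstacle: the only points requiring care are the sign in $\left[G,\,C_{k'\sigma'}\right]=-C_{k'\sigma'}$ (the proof of Proposition \ref{prp:phase} is stated with the commutator in the opposite order) and the use of the derivation identity for the inner derivation $[G,\,\cdot\,]$. If one prefers to avoid invoking the Leibniz rule, the same conclusion follows by pushing $G$ through the product directly, using $GC_{k'\sigma'}=C_{k'\sigma'}G-C_{k'\sigma'}$ twice and collecting the two resulting copies of $-C_{-k\downarrow}C_{k\uparrow}$, which again yields $[\,G,\,C_{-k\downarrow}C_{k\uparrow}\,]=-2\,C_{-k\downarrow}C_{k\uparrow}$.
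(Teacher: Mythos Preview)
Your argument is correct and is essentially the same as the paper's: the paper simply states that Lemma~\ref{lm:cacr} immediately gives the result, and your computation of the operator identity $[G,\,C_{-k\downarrow}C_{k\uparrow}]=-2\,C_{-k\downarrow}C_{k\uparrow}$ via the Leibniz rule is exactly the one-line unpacking of that claim.
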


\begin{remark}\label{rmk:ssbremark}
If $\displaystyle{\left( \Psi_0,\, C_{-k\downarrow}C_{k\uparrow}\Psi_0\right)
\not=0}$, then the global U(1) symmetry is spontaneously broken.
\end{remark}

\begin{remark}
The concept of spontaneous symmetry breaking was introduced first by Nambu and Jona-Lasinio \cite{nambujona} in 1961. This plays an important role in quantum mechanics such as the BCS-Bogoliubov theory and quantum gauge field theory.
\end{remark}

\section{An energy gap for excitation from the BCS state}

In this section we give the well-known expression for the BCS state and find the existence of an energy gap for excitation from the BCS state.

Let $\Delta_k$ and $\theta_k$ be functions of $k\in\Lambda$. We assume the existence of the following $\Delta_k$ :\quad $\Delta_k$ satisfies $\Delta_{-k}=
\Delta_k$ and is a solution to the ``gap equation" (\cite{bcs}, \cite{bogoliubov})
\begin{equation}\label{eq:gapequation}
\Delta_k=-\frac{1}{\,2\,}\sum_{k'\in\Lambda}U_{k,\,k'}\,
\frac{\Delta_{k'}}{\,\sqrt{\,\xi_{k'}^2+\Delta_{k'}^2\,}\,}\,.
\end{equation}
Note that if $\Delta_k$ is a solution to the gap equation, then
$-\Delta_k$ is also a solution. So we let $\Delta_k \geq 0$. Let
$\theta_k$ satisfy (\cite{bcs}, \cite{bogoliubov})
\begin{equation}\label{eq:theta}
\sin 2\theta_k=\frac{\Delta_k}{\,\sqrt{\,\xi_k^2+\Delta_k^2\,}\,}\,,\qquad
\cos 2\theta_k=\frac{\xi_k}{\,\sqrt{\,\xi_k^2+\Delta_k^2\,}\,}
\end{equation}
with $0 \leq \theta_k \leq \pi/2$. Note that $\theta_{-k}=\theta_k$ . We denote by $G_{B}$ the following bounded, selfadjoint operator on $\mathcal{H}=\mathbb{C}^{2^{2M}}$:
\[
G_{B}=i\sum_{k\in\Lambda} \theta_k \left( C_{-k\downarrow}C_{k\uparrow}
-C_{k\uparrow}^{\ast}C_{-k\downarrow}^{\ast} \right).
\]
Here, $\theta_k$ is given by \eqref{eq:theta}. We set
\[
\Psi_{BCS}=e^{iG_B}|0\rangle\in\mathcal{H}=\mathbb{C}^{2^{2M}}
\]
and call it the BCS state (\cite{bcs}, \cite{bogoliubov}). The BCS state is explicitly expressed as follows.

\begin{lemma}\label{lm:bcsstate} \quad $\displaystyle{
\Psi_{BCS}=\left\{ \,\prod_{k\in\Lambda}\left( \cos\theta_k+\sin\theta_k\,C_{k\uparrow}^{\ast}C_{-k\downarrow}^{\ast} \right)\,\right\}|0\rangle
}$.
\end{lemma}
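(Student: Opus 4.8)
The plan is to factor $e^{iG_B}$ into a product of pairwise commuting exponentials, one for each $k\in\Lambda$, each acting only on the pair of modes $k\uparrow$ and $-k\downarrow$, and then to evaluate a single factor on the vacuum by summing its Taylor series.

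First I would write $iG_B=\sum_{k\in\Lambda}\theta_k\,B_k$, where $B_k=C_{k\uparrow}^{\ast}C_{-k\downarrow}^{\ast}-C_{-k\downarrow}C_{k\uparrow}$; by Lemma \ref{lm:blo} one has $B_k^{\ast}=-B_k$. The structural point is that, since $\Lambda=-\Lambda$, each mode $k\uparrow$ (resp. $k\downarrow$) occurs in exactly one of the operators $B_{k'}$, namely in $B_k$ (resp. in $B_{-k}$); hence for $k\neq k'$ the operators $B_k$ and $B_{k'}$ are built from disjoint sets of modes. Using the canonical anticommutation relations of Lemma \ref{lm:cacr}, any product of two creation or annihilation operators attached to one mode pair commutes with any single creation or annihilation operator, and with any such two-fold product, attached to a disjoint mode pair. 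This yields $B_kB_{k'}=B_{k'}B_k$ for $k\neq k'$, and since $\mathcal{H}$ is finite dimensional it gives $e^{iG_B}=\prod_{k\in\Lambda}e^{\theta_k B_k}$ with pairwise commuting factors.

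Next I would compute one factor on the vacuum. Since $C_{k\uparrow}|0\rangle=0$ (Lemma \ref{lm:blo}), we have $B_k|0\rangle=C_{k\uparrow}^{\ast}C_{-k\downarrow}^{\ast}|0\rangle$, and a short computation with Lemma \ref{lm:cacr} — using $(C_{k\uparrow}^{\ast})^2=0$, $\{C_{k\uparrow},C_{k\uparrow}^{\ast}\}=1$, $C_{k\uparrow}C_{-k\downarrow}^{\ast}|0\rangle=0$, and $C_{-k\downarrow}|0\rangle=0$ — gives $B_k\bigl(C_{k\uparrow}^{\ast}C_{-k\downarrow}^{\ast}|0\rangle\bigr)=-|0\rangle$. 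By induction $B_k^{2n}|0\rangle=(-1)^n|0\rangle$ and $B_k^{2n+1}|0\rangle=(-1)^n C_{k\uparrow}^{\ast}C_{-k\downarrow}^{\ast}|0\rangle$, so splitting the exponential series into even and odd powers yields
\[
e^{\theta_k B_k}|0\rangle=\cos\theta_k\,|0\rangle+\sin\theta_k\,C_{k\uparrow}^{\ast}C_{-k\downarrow}^{\ast}|0\rangle=\bigl(\cos\theta_k+\sin\theta_k\,C_{k\uparrow}^{\ast}C_{-k\downarrow}^{\ast}\bigr)|0\rangle.
\]

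Finally I would reassemble the product by applying the factors $e^{\theta_k B_k}$ successively: for $k'\neq k$, $B_{k'}$ commutes with $C_{k\uparrow}^{\ast}$ and $C_{-k\downarrow}^{\ast}$ (disjoint modes), so $e^{\theta_{k'}B_{k'}}$ slides past the creation operators already generated and acts on the vacuum exactly as above. An induction on the number of elements of $\Lambda$ then produces $e^{iG_B}|0\rangle=\prod_{k\in\Lambda}\bigl(\cos\theta_k+\sin\theta_k\,C_{k\uparrow}^{\ast}C_{-k\downarrow}^{\ast}\bigr)|0\rangle$, the order of the product being irrelevant since the operators $\cos\theta_k+\sin\theta_k C_{k\uparrow}^{\ast}C_{-k\downarrow}^{\ast}$ commute pairwise. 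The only slightly delicate points I anticipate are the anticommutator bookkeeping behind $B_k^2|0\rangle=-|0\rangle$ and the justification of both the factorization $e^{iG_B}=\prod_k e^{\theta_k B_k}$ and the order-independence of the final product; everything else is routine.
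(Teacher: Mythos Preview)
Your proof is correct and follows essentially the same route as the paper's: factor $e^{iG_B}$ into commuting one-mode-pair exponentials via $[B_k,B_{k'}]=0$ for $k\neq k'$, then compute $B_k^n|0\rangle$ by the two-step recursion $B_k|0\rangle=C_{k\uparrow}^{\ast}C_{-k\downarrow}^{\ast}|0\rangle$, $B_k^2|0\rangle=-|0\rangle$, and sum the series to get $\cos\theta_k+\sin\theta_k\,C_{k\uparrow}^{\ast}C_{-k\downarrow}^{\ast}$ acting on $|0\rangle$. The only difference is that you make the reassembly step (sliding $e^{\theta_{k'}B_{k'}}$ past the previously generated creation operators) explicit, whereas the paper leaves it implicit after the factorization; this is a matter of exposition, not of method.
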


\begin{proof}
Since \  $\displaystyle{ 
\left[\, C_{-k\downarrow}C_{k\uparrow}-C_{k\uparrow}^{\ast}C_{-k\downarrow}^{\ast},\, C_{-k'\downarrow}C_{k'\uparrow}-C_{k'\uparrow}^{\ast}C_{-k'\downarrow}^{\ast}  \,\right]=0}$,\  $k\not=k'$, \  it follows
\[
\Psi_{BCS}=\left\{ \,\prod_{k\in\Lambda}\exp\left[ -\theta_k \left(
C_{-k\downarrow}C_{k\uparrow}-C_{k\uparrow}^{\ast}C_{-k\downarrow}^{\ast}
\right)\right]\,\right\}|0\rangle.
\]
A straightforward calculation based on Lemmas \ref{lm:blo} and \ref{lm:cacr} gives that for $n=1,\,2,\,3,\,\ldots$,
\begin{eqnarray*}
\left[ -\theta_k \left( C_{-k\downarrow}C_{k\uparrow}-C_{k\uparrow}^{\ast}C_{-k\downarrow}^{\ast} \right) \right]^{2n-1} |0\rangle&=&
(-1)^{n-1}\,\theta_k^{\,2n-1}\,C_{k\uparrow}^{\ast}C_{-k\downarrow}^{\ast}\, |0\rangle,\\
\left[ -\theta_k \left( C_{-k\downarrow}C_{k\uparrow}-C_{k\uparrow}^{\ast}C_{-k\downarrow}^{\ast} \right) \right]^{2n} |0\rangle&=&
(-1)^n\,\theta_k^{\,2n}\, |0\rangle.
\end{eqnarray*}
Hence,\quad $\displaystyle{
\exp\left[ -\theta_k \left( C_{-k\downarrow}C_{k\uparrow}
-C_{k\uparrow}^{\ast}C_{-k\downarrow}^{\ast}\right)\right] \,|0\rangle=
\left( \cos\theta_k+\sin\theta_k\,C_{k\uparrow}^{\ast}C_{-k\downarrow}^{\ast}
\right)\,|0\rangle}$.\\ The result thus follows.
\end{proof}

\begin{remark}
In 1957 Bardeen, Cooper and Schrieffer \cite{bcs} introduced the well-known expression in Lemma \ref{lm:bcsstate}.
\end{remark}

\begin{corollary}\label{crl:bcsexpectationvalue}\hfill

\noindent {\rm (a)}\quad $\displaystyle{ \left(\Psi_{BCS},\, C_{-k\downarrow}C_{k\uparrow}\Psi_{BCS}\right)=\left(\Psi_{BCS},\,C_{k\uparrow}^{\ast}C_{-k\downarrow}^{\ast}\Psi_{BCS}\right)=\frac{1}{\,2\,}\sin 2\theta_k }$ .

\noindent {\rm (b)}\quad $\displaystyle{ \Delta_k=-\sum_{k'\in\Lambda}
U_{k,\,k'}\, \left(\Psi_{BCS},\, C_{-k'\downarrow}C_{k'\uparrow}\Psi_{BCS}\right) }$.
\end{corollary}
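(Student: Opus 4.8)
The plan is to establish (a) first by a direct computation and then read off (b) from the gap equation. For (a), the idea is to exploit the explicit product form of $\Psi_{BCS}$ supplied by Lemma~\ref{lm:bcsstate} together with the fact that $C_{-k\downarrow}C_{k\uparrow}$, being a product of two annihilation operators, commutes with every one of the (mutually commuting) factors $\cos\theta_{k'}+\sin\theta_{k'}\,C_{k'\uparrow}^{\ast}C_{-k'\downarrow}^{\ast}$ indexed by $k'\neq k$; this uses Lemma~\ref{lm:cacr}, and one should check that the relevant anticommutations persist even in the borderline case $k'=-k$, where they still hold because the spins or the wave vectors differ. Writing $P=\prod_{k'\neq k}\bigl(\cos\theta_{k'}+\sin\theta_{k'}\,C_{k'\uparrow}^{\ast}C_{-k'\downarrow}^{\ast}\bigr)$, one then gets $C_{-k\downarrow}C_{k\uparrow}\Psi_{BCS}=P\,C_{-k\downarrow}C_{k\uparrow}\bigl(\cos\theta_k+\sin\theta_k\,C_{k\uparrow}^{\ast}C_{-k\downarrow}^{\ast}\bigr)|0\rangle$. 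A short computation with the canonical anticommutation relations and $C_{k\sigma}|0\rangle=0$ (Lemma~\ref{lm:blo}) collapses the vector in parentheses: the $\cos\theta_k$ term vanishes because $C_{k\uparrow}|0\rangle=0$, while the $\sin\theta_k$ term gives $C_{-k\downarrow}C_{k\uparrow}C_{k\uparrow}^{\ast}C_{-k\downarrow}^{\ast}|0\rangle=|0\rangle$. Hence $C_{-k\downarrow}C_{k\uparrow}\Psi_{BCS}=\sin\theta_k\,P|0\rangle$.

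Next I would compute the inner product $(\Psi_{BCS},\,P|0\rangle)$. Since $P$ involves only the modes $k'\neq k$ it commutes with $C_{k\uparrow}^{\ast}C_{-k\downarrow}^{\ast}$, so splitting $\Psi_{BCS}=\cos\theta_k\,P|0\rangle+\sin\theta_k\,P\,C_{k\uparrow}^{\ast}C_{-k\downarrow}^{\ast}|0\rangle$ exhibits it as a sum of two orthogonal vectors, the first with the $k$-mode empty and the second with the $k$-mode doubly occupied; moreover $\|P|0\rangle\|^2=\prod_{k'\neq k}(\cos^2\theta_{k'}+\sin^2\theta_{k'})=1$. Therefore $(\Psi_{BCS},\,P|0\rangle)=\cos\theta_k$, and consequently $(\Psi_{BCS},\,C_{-k\downarrow}C_{k\uparrow}\Psi_{BCS})=\sin\theta_k\cos\theta_k=\tfrac12\sin2\theta_k$. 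The remaining equality in (a) follows at once: by Lemma~\ref{lm:blo} we have $C_{k\uparrow}^{\ast}C_{-k\downarrow}^{\ast}=(C_{-k\downarrow}C_{k\uparrow})^{\ast}$, so $(\Psi_{BCS},\,C_{k\uparrow}^{\ast}C_{-k\downarrow}^{\ast}\Psi_{BCS})$ is the complex conjugate of $(\Psi_{BCS},\,C_{-k\downarrow}C_{k\uparrow}\Psi_{BCS})$, which is the real number $\tfrac12\sin2\theta_k$.

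For (b) I would simply substitute. By \eqref{eq:theta} the factor $\Delta_{k'}/\sqrt{\xi_{k'}^2+\Delta_{k'}^2}$ appearing in the gap equation \eqref{eq:gapequation} equals $\sin2\theta_{k'}$, and by part (a) this equals $2\,(\Psi_{BCS},\,C_{-k'\downarrow}C_{k'\uparrow}\Psi_{BCS})$; inserting this into \eqref{eq:gapequation} converts the prefactor $-\tfrac12$ into $-1$ and yields exactly the claimed formula.

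I do not anticipate any genuine obstacle here; the only step demanding a little care is the first one, namely verifying that $C_{-k\downarrow}C_{k\uparrow}$ passes through every $k'\neq k$ factor of $\Psi_{BCS}$ (including $k'=-k$) and that the surviving single-mode computation indeed reduces to $\sin\theta_k|0\rangle$ with the correct sign. Everything else comes down to orthonormality of the occupation-number basis states and the identity $2\sin\theta_k\cos\theta_k=\sin2\theta_k$.
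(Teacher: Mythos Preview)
Your proposal is correct and follows essentially the same route as the paper: the paper's proof of (a) simply says ``Combining Lemma~\ref{lm:bcsstate} with Lemmas~\ref{lm:blo} and \ref{lm:cacr} gives (a),'' and your argument is precisely a detailed unpacking of that sentence---pull out the $k$-factor from the product form of $\Psi_{BCS}$, let $C_{-k\downarrow}C_{k\uparrow}$ act on it, and take the inner product. For (b) both you and the paper just substitute (a) into \eqref{eq:gapequation} via \eqref{eq:theta}.
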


\begin{proof} \quad Combining Lemma \ref{lm:bcsstate} with Lemmas \ref{lm:blo} and \ref{lm:cacr} gives (a). Part (b) follows immediately from (a) and \eqref{eq:gapequation}.
\end{proof}

Let us recall Remark \ref{rmk:ssbremark}. We replace $\Psi_0$ in Remark \ref{rmk:ssbremark} by $\Psi_{BCS}$ and set (for all $k\in\Lambda$)
\begin{equation}\label{eq:vev}
\left\{ \begin{array}{ll} \displaystyle{
C_{-k\downarrow}C_{k\uparrow}=
\left(\Psi_{BCS},\, C_{-k\downarrow}C_{k\uparrow}\Psi_{BCS}\right)+b_k \,,} & \\
\noalign{ \vskip0.3cm }
\displaystyle{
C_{k\uparrow}^{\ast}C_{-k\downarrow}^{\ast}=
\left(\Psi_{BCS},\,C_{k\uparrow}^{\ast}C_{-k\downarrow}^{\ast}\Psi_{BCS}\right)+b_k^{\ast}
\,.} &
\end{array}\right.
\end{equation}
A straightforward calculation gives the following.

\begin{lemma}\label{lm:h-hm} Set
\begin{eqnarray*}
H_M &=&\sum_{k\in\Lambda,\,\sigma=\uparrow,\,\downarrow}\xi_k\, 
C_{k\,\sigma}^{\ast}C_{k\,\sigma}-\sum_{k\in\Lambda}\Delta_k\left(
 C_{-k\downarrow}C_{k\uparrow}+C_{k\uparrow}^{\ast}C_{-k\downarrow}^{\ast}
 \right) \\
& &+\sum_{k\in\Lambda}\Delta_k\left(\Psi_{BCS},\, C_{-k\downarrow}
C_{k\uparrow}\Psi_{BCS}\right).
\end{eqnarray*}
Then the BCS Hamiltonian \eqref{eq:hamiltonian} is rewritten as
$\displaystyle{
H=H_M+\sum_{k,\,k'\in\Lambda}U_{k,\,k'}\, b_{k'}^{\ast}\, b_k }$ .
\end{lemma}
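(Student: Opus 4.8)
The plan is to substitute the decomposition \eqref{eq:vev} directly into the quartic interaction term of $H$ and expand. Write $\beta_k=\left(\Psi_{BCS},\,C_{-k\downarrow}C_{k\uparrow}\Psi_{BCS}\right)$; by Corollary \ref{crl:bcsexpectationvalue}(a) this also equals $\left(\Psi_{BCS},\,C_{k\uparrow}^{\ast}C_{-k\downarrow}^{\ast}\Psi_{BCS}\right)=\frac{1}{2}\sin 2\theta_k$ and is in particular a real number, so \eqref{eq:vev} reads $C_{-k\downarrow}C_{k\uparrow}=\beta_k+b_k$ and $C_{k\uparrow}^{\ast}C_{-k\downarrow}^{\ast}=\beta_k+b_k^{\ast}$, with $b_k^{\ast}$ genuinely the adjoint of $b_k$ (this is where the reality of $\beta_k$ enters). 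Since the $\beta_k$ are scalars, the product in the interaction term expands, keeping left-right order, as
\[
C_{k'\uparrow}^{\ast}C_{-k'\downarrow}^{\ast}C_{-k\downarrow}C_{k\uparrow}=(\beta_{k'}+b_{k'}^{\ast})(\beta_k+b_k)=\beta_{k'}\beta_k+\beta_{k'}b_k+\beta_k b_{k'}^{\ast}+b_{k'}^{\ast}b_k,
\]
and summing against $U_{k,k'}$ produces four sums, the last of which is exactly the remainder term $\sum_{k,k'}U_{k,k'}b_{k'}^{\ast}b_k$ appearing in the claim.

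Next I would evaluate the other three sums using the gap equation in the form of Corollary \ref{crl:bcsexpectationvalue}(b), that is, $\sum_{k'\in\Lambda}U_{k,k'}\beta_{k'}=-\Delta_k$, together with the symmetry $U_{k',k}=U_{k,k'}$. The $\beta_{k'}b_k$-sum becomes $\sum_k\bigl(\sum_{k'}U_{k,k'}\beta_{k'}\bigr)b_k=-\sum_k\Delta_k b_k$; the $\beta_k b_{k'}^{\ast}$-sum, after swapping the roles of $k$ and $k'$ and invoking $U_{k,k'}=U_{k',k}$, becomes $-\sum_k\Delta_k b_k^{\ast}$; and the scalar sum $\sum_{k,k'}U_{k,k'}\beta_{k'}\beta_k$ becomes $-\sum_k\Delta_k\beta_k$. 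Thus the interaction term equals $-\sum_k\Delta_k(b_k+b_k^{\ast})-\sum_k\Delta_k\beta_k+\sum_{k,k'}U_{k,k'}b_{k'}^{\ast}b_k$.

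Finally I would re-expand $b_k+b_k^{\ast}$ in terms of the operators, again via \eqref{eq:vev}: $b_k+b_k^{\ast}=C_{-k\downarrow}C_{k\uparrow}+C_{k\uparrow}^{\ast}C_{-k\downarrow}^{\ast}-2\beta_k$. Substituting, the scalar contributions combine as $2\sum_k\Delta_k\beta_k-\sum_k\Delta_k\beta_k=\sum_k\Delta_k\beta_k$, so that adding back the kinetic term $\sum_{k,\sigma}\xi_k C_{k\sigma}^{\ast}C_{k\sigma}$ reproduces precisely $H_M$ together with $\sum_{k,k'}U_{k,k'}b_{k'}^{\ast}b_k$, which is the assertion. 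The argument is pure algebra, involving no analysis; the only delicate point is the index bookkeeping in the $\beta_k b_{k'}^{\ast}$-sum, where one must relabel the summation variables and use $U_{k,k'}=U_{k',k}$ before the gap equation can be applied — this is the one place the symmetry hypothesis on $U$ is really needed, and it is where a stray sign or mismatched index would most easily slip in.
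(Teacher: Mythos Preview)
Your argument is correct and is precisely the ``straightforward calculation'' the paper alludes to without spelling out: substitute \eqref{eq:vev} into the quartic term, expand, and use Corollary~\ref{crl:bcsexpectationvalue}(b) together with $U_{k,k'}=U_{k',k}$ to collapse the cross terms into the $\Delta_k$-pieces of $H_M$. There is nothing to add.
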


\begin{remark}
The Hamiltonian $H_M$ is called the mean field approximation for the BCS Hamiltonian $H$.
\end{remark}

\begin{lemma}\label{lm:hkvk} Let $\displaystyle{h_k=C_{k\,\uparrow}^{\ast}C_{k\,\uparrow}+C_{-k\,\downarrow}^{\ast}C_{-k\,\downarrow}}$ and let $\displaystyle{v_k=C_{-k\downarrow}C_{k\uparrow}+C_{k\uparrow}^{\ast}C_{-k\downarrow}^{\ast}}$.

\noindent {\rm (a)}\quad For $n=1,\,2,\,3,\,\ldots$,
\begin{eqnarray*}
\overbrace{ [\,\ldots\,[ }^{2n-1} \,\xi_kh_k-\Delta_kv_k,\,\overbrace{iG_B\,],\,\ldots,\, iG_B\,]}^{2n-1}&=&(-1)^{n-1}\left( 2\theta_k \right)^{2n-1}\times\\
& &\quad \times \left\{ \xi_kv_k+\Delta_k\left( h_k-1 \right) \right\},\\
\overbrace{ [\,\ldots\, [ }^{2n} \,\xi_kh_k-\Delta_kv_k,\,\overbrace{iG_B\,],\,\ldots,\, iG_B\,]}^{2n}&=&(-1)^n\left( 2\theta_k \right)^{2n}\times\\
& &\quad \times \left\{ \xi_k\left( h_k-1 \right)-\Delta_kv_k \right\}.
\end{eqnarray*}

\noindent {\rm (b)} 
\begin{eqnarray*}
e^{-iG_B}\left(\xi_kh_k-\Delta_kv_k\right)e^{iG_B}
&=&\left( \xi_k\cos 2\theta_k+\Delta_k\sin 2\theta_k \right) h_k \\
& &+\left( \xi_k\sin 2\theta_k-\Delta_k\cos 2\theta_k \right) v_k \\
& &+\left( 2\xi_k\sin^2\theta_k-\Delta_k\sin 2\theta_k \right).
\end{eqnarray*}
\end{lemma}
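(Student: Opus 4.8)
The plan is to reduce the whole computation to a handful of commutator identities inside the single mode pair $(k\uparrow,\,-k\downarrow)$, and then obtain (b) from (a) by feeding the result of (a) into Lemma~\ref{lm:formula}. Write $b_k=C_{-k\downarrow}C_{k\uparrow}$, so that $v_k=b_k+b_k^{\ast}$ and $iG_B=-\sum_{k'\in\Lambda}\theta_{k'}\bigl(b_{k'}-b_{k'}^{\ast}\bigr)$. Both $h_k$ and $v_k$ are built only from $C_{k\uparrow}^{(\ast)}$ and $C_{-k\downarrow}^{(\ast)}$, whereas for $k'\neq k$ the operator $b_{k'}-b_{k'}^{\ast}$ involves the disjoint modes $k'\uparrow$ and $-k'\downarrow$; hence by Lemma~\ref{lm:cacr} the operator $\xi_kh_k-\Delta_kv_k$ commutes with every summand of $iG_B$ except the one with $k'=k$. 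Throughout, I may therefore replace $iG_B$ by $-\theta_k\bigl(b_k-b_k^{\ast}\bigr)$.

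First I would record the three basic identities, each a short consequence of Lemma~\ref{lm:cacr}: with $n_{k\uparrow}=C_{k\uparrow}^{\ast}C_{k\uparrow}$, $n_{-k\downarrow}=C_{-k\downarrow}^{\ast}C_{-k\downarrow}$ and $h_k=n_{k\uparrow}+n_{-k\downarrow}$, one gets $[h_k,b_k]=-2b_k$ and $[h_k,b_k^{\ast}]=2b_k^{\ast}$, hence $[h_k,\,b_k-b_k^{\ast}]=-2v_k$; and reducing the products gives $b_kb_k^{\ast}=(1-n_{k\uparrow})(1-n_{-k\downarrow})$ and $b_k^{\ast}b_k=n_{k\uparrow}n_{-k\downarrow}$, so that $[b_k,b_k^{\ast}]=1-h_k$ and therefore $[v_k,\,b_k-b_k^{\ast}]=-2[b_k,b_k^{\ast}]=2(h_k-1)$. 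Setting $P=\xi_kh_k-\Delta_kv_k$, $Q=\xi_kv_k+\Delta_k(h_k-1)$ and $R=\xi_k(h_k-1)-\Delta_kv_k$, these identities (together with $[R,\,iG_B]=[P,\,iG_B]$, since $R=P-\xi_k$ and $\xi_k$ is a scalar) yield the closed system
\[
[P,\,iG_B]=2\theta_k\,Q,\qquad [Q,\,iG_B]=-2\theta_k\,R,\qquad [R,\,iG_B]=2\theta_k\,Q .
\]
Part (a) then follows by a straightforward induction on $n$: taking the commutator with $iG_B$ repeatedly sends $P\mapsto 2\theta_kQ\mapsto -(2\theta_k)^2R\mapsto -(2\theta_k)^3Q\mapsto\cdots$, and matching parity gives $(-1)^{n-1}(2\theta_k)^{2n-1}Q$ for the $(2n-1)$-fold commutator and $(-1)^{n}(2\theta_k)^{2n}R$ for the $2n$-fold one.

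For part (b) I would apply Lemma~\ref{lm:formula} with $A=\xi_kh_k-\Delta_kv_k=P$, $B=G_B$ and $\alpha=1$. The $n$-fold iterated commutator of $P$ with $G_B$ equals $i^{-n}$ times the corresponding $n$-fold commutator with $iG_B$ evaluated in part (a), so the partial sums in Lemma~\ref{lm:formula} reduce to $\sum_{n=0}^{N}\frac{1}{n!}\,C_n$, where $C_0=P$ and, for $n\geq1$, $C_n$ is the $n$-fold commutator of $P$ with $iG_B$. Using the two formulas of (a) and splitting into odd and even $n$, this converges to
\[
P+Q\sum_{n=1}^{\infty}\frac{(-1)^{n-1}(2\theta_k)^{2n-1}}{(2n-1)!}+R\sum_{n=1}^{\infty}\frac{(-1)^{n}(2\theta_k)^{2n}}{(2n)!}=P+(\sin2\theta_k)\,Q+(\cos2\theta_k-1)\,R .
\]
Substituting the definitions of $P$, $Q$, $R$, collecting the coefficients of $h_k$ and of $v_k$, and using $1-\cos2\theta_k=2\sin^2\theta_k$ on the scalar part then gives exactly the stated expression for $e^{-iG_B}(\xi_kh_k-\Delta_kv_k)e^{iG_B}$.

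The only genuine computation is the pair of ``diagonal'' commutators $[h_k,\,b_k-b_k^{\ast}]$ and, above all, $[b_k,b_k^{\ast}]=1-h_k$: this is where the paired structure of the modes enters, and it is precisely this identity that makes two applications of the commutator with $iG_B$ rescale $Q$ and $R$ by $-(2\theta_k)^2$, thereby producing the trigonometric functions in (b). Everything else is routine induction and a resummation of the exponential series into $\sin$ and $\cos$.
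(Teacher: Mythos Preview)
Your argument is correct and follows essentially the same route as the paper: the paper also reduces everything to the two basic identities $[h_k,\,iG_B]=2\theta_k v_k$ and $[v_k,\,iG_B]=-2\theta_k(h_k-1)$ (your $P,Q,R$ system is just a repackaging of these), proves (a) by induction, and then says (b) follows immediately from (a). The only difference is that you spell out the derivation of those two identities via $[b_k,b_k^{\ast}]=1-h_k$ and carry out the $\sin$/$\cos$ resummation for (b) explicitly, whereas the paper leaves both as ``straightforward'' calculations.
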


\begin{proof} (a) \quad A straightforward calculation based on Lemma \ref{lm:cacr} gives
\begin{equation}\label{eq:cr}
[\, h_k,\,iG_B \,]=2\theta_k v_k\,,\qquad [\, v_k,\,iG_B \,]=-2\theta_k\left( h_k-1\right),
\end{equation}
and hence the result is true for $n=1$. Suppose that the result is true for $n$. Then, by \eqref{eq:cr},
\begin{eqnarray*}
& &\overbrace{ [\,\ldots\, [ }^{2n+1} \,\xi_kh_k-\Delta_kv_k,\,\overbrace{iG_B\,],\,\ldots,\, iG_B\,]}^{2n+1} \\
&=&(-1)^n\left( 2\theta_k \right)^{2n}
[\,\xi_k\left( h_k-1 \right)-\Delta_kv_k,\, iG_B\,] \\
&=&(-1)^n\left( 2\theta_k \right)^{2n+1}
\left\{ \xi_kv_k+\Delta_k\left( h_k-1 \right) \right\},
\end{eqnarray*}
and hence
\begin{eqnarray*}
& &\overbrace{ [\,\ldots\, [ }^{2n+2} \,\xi_kh_k-\Delta_kv_k,\,\overbrace{iG_B\,],\,\ldots,\, iG_B\,]}^{2n+2} \\
&=&(-1)^n\left( 2\theta_k \right)^{2n+1}
[\,\xi_kv_k+\Delta_k\left( h_k-1 \right),\, iG_B\,] \\
&=&(-1)^{n+1}\left( 2\theta_k \right)^{2(n+1)}
\left\{ \xi_k\left( h_k-1 \right)-\Delta_kv_k \right\}.
\end{eqnarray*}
Therefore the result is true for $n+1$, and hence for every $n=1,\,2,\,3,\,\ldots$.

Part (b) follows immediately from (a).
\end{proof}

We employ the well-known Bogoliubov transformation of $C_{k\,\sigma}$ \cite{bogoliubov} :
\begin{equation}\label{eq:gamma}
\gamma_{k\,\sigma}=e^{iG_B}C_{k\,\sigma}e^{-iG_B}.
\end{equation}
Note that the operator $\gamma_{k\,\sigma}$ and its adjoint operator $\gamma_{k\,\sigma}^{\ast}$ are both bounded linear operators on $\mathcal{H}=\mathbb{C}^{2^{2M}}$.

\begin{proposition}\label{prp:hm}
\begin{eqnarray*}
H_M&=&\sum_{k\in\Lambda,\,\sigma=\uparrow,\,\downarrow}
\sqrt{\,\xi_k^2+\Delta_k^2\,}\,\gamma_{k\,\sigma}^{\ast}\gamma_{k\,\sigma}\\
& &+\sum_{k\in\Lambda}\left\{ \xi_k-\sqrt{\,\xi_k^2+\Delta_k^2\,}
+\Delta_k\left(\Psi_{BCS},\, C_{-k\downarrow}C_{k\uparrow}\Psi_{BCS}\right)\right\}.
\end{eqnarray*}
\end{proposition}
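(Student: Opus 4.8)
The plan is to reduce the proposition to Lemma~\ref{lm:hkvk}(b) and the defining relations \eqref{eq:theta} for $\theta_k$, and then to remove the conjugation by $e^{iG_B}$. First I would rewrite $H_M$ using the operators $h_k$ and $v_k$ of Lemma~\ref{lm:hkvk}. Because $\Lambda$ is symmetric under $k\mapsto-k$ and $\xi_{-k}=\xi_k$, relabelling $k\mapsto-k$ in the spin-$\downarrow$ part of the kinetic term turns $\sum_{k\in\Lambda,\,\sigma}\xi_k\,C_{k\sigma}^{\ast}C_{k\sigma}$ into $\sum_{k\in\Lambda}\xi_k h_k$; since $\sum_{k\in\Lambda}\Delta_k(C_{-k\downarrow}C_{k\uparrow}+C_{k\uparrow}^{\ast}C_{-k\downarrow}^{\ast})=\sum_{k\in\Lambda}\Delta_k v_k$, this gives
\[
H_M=\sum_{k\in\Lambda}\bigl(\xi_k h_k-\Delta_k v_k\bigr)+\sum_{k\in\Lambda}\Delta_k\bigl(\Psi_{BCS},\,C_{-k\downarrow}C_{k\uparrow}\Psi_{BCS}\bigr).
\]

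Next I would apply Lemma~\ref{lm:hkvk}(b) to each summand $\xi_k h_k-\Delta_k v_k$ and simplify the three coefficients using $\sin 2\theta_k=\Delta_k/\sqrt{\xi_k^2+\Delta_k^2}$ and $\cos 2\theta_k=\xi_k/\sqrt{\xi_k^2+\Delta_k^2}$. The coefficient of $h_k$ collapses to $\sqrt{\xi_k^2+\Delta_k^2}$, the coefficient of $v_k$ becomes identically $0$, and the scalar term is $2\xi_k\sin^2\theta_k-\Delta_k\sin 2\theta_k=\xi_k(1-\cos 2\theta_k)-\Delta_k\sin 2\theta_k=\xi_k-\sqrt{\xi_k^2+\Delta_k^2}$. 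Hence $e^{-iG_B}(\xi_k h_k-\Delta_k v_k)e^{iG_B}=\sqrt{\xi_k^2+\Delta_k^2}\,h_k+\xi_k-\sqrt{\xi_k^2+\Delta_k^2}$, and applying $e^{iG_B}(\cdot)e^{-iG_B}$ to both sides,
\[
\xi_k h_k-\Delta_k v_k=\sqrt{\xi_k^2+\Delta_k^2}\;e^{iG_B}h_k\,e^{-iG_B}+\xi_k-\sqrt{\xi_k^2+\Delta_k^2}.
\]

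Then I would express $e^{iG_B}h_k e^{-iG_B}$ through the Bogoliubov operators of \eqref{eq:gamma}. Since $G_B$ is selfadjoint, $e^{iG_B}$ is unitary, so inserting $e^{-iG_B}e^{iG_B}$ between the two factors of each product $C_{k\uparrow}^{\ast}C_{k\uparrow}$ and $C_{-k\downarrow}^{\ast}C_{-k\downarrow}$ in $h_k$ gives $e^{iG_B}h_k e^{-iG_B}=\gamma_{k\uparrow}^{\ast}\gamma_{k\uparrow}+\gamma_{-k\downarrow}^{\ast}\gamma_{-k\downarrow}$. Summing over $k\in\Lambda$, relabelling $k\mapsto-k$ in the sum of $\gamma_{-k\downarrow}^{\ast}\gamma_{-k\downarrow}$ terms (using once more that $\Lambda$, $\xi_k$ and $\Delta_k$ are invariant under $k\mapsto-k$) to combine the two spins into $\sum_{k\in\Lambda,\,\sigma}\sqrt{\xi_k^2+\Delta_k^2}\,\gamma_{k\sigma}^{\ast}\gamma_{k\sigma}$, and adding back the scalar summand of $H_M$, I obtain the stated formula.

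The argument is mostly routine bookkeeping once Lemma~\ref{lm:hkvk}(b) is in hand; the only steps needing care are the two relabellings $k\mapsto-k$, which rely on the stated symmetries, and verifying that the coefficient of $v_k$ vanishes — this is precisely where the choice \eqref{eq:theta} of $\theta_k$ (equivalently, the gap equation) enters, since it is what makes the Bogoliubov rotation diagonalize the quadratic form. I do not anticipate any genuine obstacle beyond this.
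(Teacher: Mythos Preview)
Your argument is correct and follows essentially the same route as the paper's proof: rewrite $H_M$ in terms of $h_k$ and $v_k$, apply Lemma~\ref{lm:hkvk}(b) together with \eqref{eq:theta} to see that conjugation by $e^{-iG_B}$ kills the $v_k$ term and produces $\sqrt{\xi_k^2+\Delta_k^2}\,h_k$ plus the correct scalar, and then undo the conjugation using the definition \eqref{eq:gamma} of $\gamma_{k\sigma}$. One small correction to your closing commentary: the vanishing of the $v_k$ coefficient uses only the trigonometric relations \eqref{eq:theta}, not the gap equation \eqref{eq:gapequation}, so the parenthetical ``equivalently, the gap equation'' should be dropped.
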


\begin{proof}
Combining Lemma \ref{lm:hkvk} (b) with \eqref{eq:theta} gives
\begin{eqnarray*}
& &e^{-iG_B}H_Me^{iG_B} \\
&=&\sum_{k\in\Lambda} e^{-iG_B}\left(\xi_kh_k-\Delta_kv_k\right)e^{iG_B}+\sum_{k\in\Lambda}\Delta_k\left(\Psi_{BCS},\, C_{-k\downarrow}C_{k\uparrow}\Psi_{BCS}\right)\\
&=&\sum_{k\in\Lambda} \left\{ \sqrt{\,\xi_k^2+\Delta_k^2\,}\,h_k+\xi_k-\sqrt{\,\xi_k^2+\Delta_k^2\,}+\Delta_k\left(\Psi_{BCS},\, C_{-k\downarrow}C_{k\uparrow}\Psi_{BCS}\right)\right\}.
\end{eqnarray*}
The result thus follows from \eqref{eq:gamma}.
\end{proof}

Proposition \ref{prp:hm} immediately yields the following.

\begin{corollary}\label{crl:ground} {\rm (a)}\quad The BCS state $\Psi_{BCS}$ is the ground state of $H_M$, and the ground state energy $E_{BCS}$ is given by
\[
E_{BCS}=\sum_{k\in\Lambda}\left\{ \xi_k-\sqrt{\,\xi_k^2+\Delta_k^2\,}
+\Delta_k\left(\Psi_{BCS},\, C_{-k\downarrow}C_{k\uparrow}\Psi_{BCS}\right)\right\}.
\]

\noindent {\rm (b)}\quad Let $E_{BCS}$ be as in (a). Then the spectrum of $H_M$ is given by
\[
\sigma\left( H_M\right)=\left\{ \sum_{k\in\Lambda} \sqrt{\xi_k^2+\Delta_k^2}\,
\left(\, N_{k\uparrow}+N_{k\downarrow}\,\right)+E_{BCS} \right\}_{N_{k\uparrow},\, N_{k\downarrow}=0,\,1}.
\]
\end{corollary}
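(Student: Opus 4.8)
The plan is to read everything off Proposition \ref{prp:hm}, which already diagonalizes $H_M$ in terms of the operators $\gamma_{k\,\sigma}$. The first thing I would do is record the structural facts about these operators that make the proposition usable: since $\gamma_{k\,\sigma}=e^{iG_B}C_{k\,\sigma}e^{-iG_B}$ is a unitary conjugate of $C_{k\,\sigma}$, Lemma \ref{lm:cacr} transfers verbatim, so the $\gamma_{k\,\sigma}$ and $\gamma_{k\,\sigma}^{\ast}$ again satisfy the canonical anticommutation relations. In particular each $N_{k\,\sigma}:=\gamma_{k\,\sigma}^{\ast}\gamma_{k\,\sigma}$ is an orthogonal projection (it is selfadjoint and $N_{k\,\sigma}^2=\gamma_{k\,\sigma}^{\ast}\gamma_{k\,\sigma}\gamma_{k\,\sigma}^{\ast}\gamma_{k\,\sigma}=\gamma_{k\,\sigma}^{\ast}(1-\gamma_{k\,\sigma}^{\ast}\gamma_{k\,\sigma})\gamma_{k\,\sigma}=N_{k\,\sigma}$ using $\gamma_{k\,\sigma}\gamma_{k\,\sigma}=0$), hence has spectrum $\{0,1\}$, and distinct $N_{k\,\sigma}$ commute. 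Also $\gamma_{k\,\sigma}|0\rangle$ needs to be identified: $\gamma_{k\,\sigma}\Psi_{BCS}=e^{iG_B}C_{k\,\sigma}e^{-iG_B}e^{iG_B}|0\rangle=e^{iG_B}C_{k\,\sigma}|0\rangle=0$ by Lemma \ref{lm:blo}, so $\Psi_{BCS}$ is the joint $0$-eigenvector of all the $N_{k\,\sigma}$.

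For part (a): write $H_M=\sum_{k,\sigma}\sqrt{\xi_k^2+\Delta_k^2}\,N_{k\,\sigma}+E_{BCS}$ from Proposition \ref{prp:hm}, where $E_{BCS}$ is the displayed scalar (times the identity). Since $\sqrt{\xi_k^2+\Delta_k^2}\ge 0$ and each $N_{k\,\sigma}\ge 0$, the operator $\sum_{k,\sigma}\sqrt{\xi_k^2+\Delta_k^2}\,N_{k\,\sigma}$ is a nonnegative operator, so $H_M\ge E_{BCS}$ as operators, and equality $(\Psi_{BCS},H_M\Psi_{BCS})=E_{BCS}$ holds because $\Psi_{BCS}$ is annihilated by every $N_{k\,\sigma}$ and $\|\Psi_{BCS}\|=1$ (the latter because $e^{iG_B}$ is unitary and $\||0\rangle\|=1$). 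Hence $\Psi_{BCS}$ is a ground state with energy $E_{BCS}$.

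For part (b): the family $\{N_{k\,\sigma}\}_{k\in\Lambda,\sigma=\uparrow,\downarrow}$ is a commuting family of orthogonal projections on the finite-dimensional space $\mathcal{H}$, so $\mathcal{H}$ decomposes as an orthogonal direct sum of their joint eigenspaces, indexed by the choices $N_{k\,\sigma}\in\{0,1\}$; I would note that all $2^{2M}$ such joint eigenspaces are nonzero — e.g. produce a joint eigenvector for a prescribed pattern by applying the corresponding product of $\gamma_{k\,\sigma}^{\ast}$'s to $\Psi_{BCS}$, or simply observe that the $2^{2M}$ operators $(\gamma_{k\uparrow}^{\ast})^{n_{k\uparrow}}(\gamma_{k\downarrow}^{\ast})^{n_{k\downarrow}}\cdots\Psi_{BCS}$ are an orthonormal basis by the CAR, mirroring the Remark after Lemma \ref{lm:cacr}. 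On the joint eigenspace with pattern $(N_{k\uparrow},N_{k\downarrow})$ the operator $H_M$ acts as the scalar $\sum_{k\in\Lambda}\sqrt{\xi_k^2+\Delta_k^2}\,(N_{k\uparrow}+N_{k\downarrow})+E_{BCS}$, so $\sigma(H_M)$ is exactly the set of these scalars as the pattern ranges over $\{0,1\}^{2\Lambda}$, which is the claimed formula.

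I do not expect any serious obstacle: the content is entirely in Proposition \ref{prp:hm}, and the rest is the standard fact that a selfadjoint operator written as a nonnegative real combination of commuting orthogonal projections plus a scalar has the ground state and spectrum one reads off directly. The only point requiring a line of care is verifying that every joint eigenspace of the $N_{k\,\sigma}$ is nonzero so that no value is spuriously omitted from $\sigma(H_M)$; this follows from the CAR for $\gamma_{k\,\sigma}$ exactly as in the Remark following Lemma \ref{lm:cacr}.
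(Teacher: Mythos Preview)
Your proposal is correct and is precisely the unpacking of the paper's own argument, which simply states that the corollary follows immediately from Proposition~\ref{prp:hm}. The structural facts you record about the $\gamma_{k\,\sigma}$ (CAR, $\gamma_{k\,\sigma}\Psi_{BCS}=0$, commuting projections $N_{k\,\sigma}$) are exactly those the paper later isolates in Corollary~\ref{crl:properties}, so there is no difference in approach---you have just made the ``immediately'' explicit.
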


\begin{remark}
Corollary \ref{crl:ground} (b) implies that it takes a finite energy $\sqrt{\xi_k^2+\Delta_k^2}$ ($>\Delta_k$) to excite a particle from the BCS state to an upper energy state. So the function $\Delta_k$ of $k\in\Lambda$ corresponds exactly to the energy gap, and hence $\Delta_k$ is called the gap function (see Bardeen, Cooper and Schreiffer \cite{bcs}, and Bogoliubov \cite{bogoliubov}).
\end{remark}

We now study some properties of the operators $\gamma_{k\,\sigma}$ in \eqref{eq:gamma} (see Bogoliubov \cite{bogoliubov}).

\begin{corollary}\label{crl:properties} The operators $\gamma_{k\,\sigma}$ and $\gamma_{k\,\sigma}^{\ast}$ satisfy the following.

\noindent {\rm (a)} \quad $\left\{ \gamma_{k\sigma},\, \gamma_{k'\sigma'}^{\ast} \right\}=\delta_{k k'}\delta_{\sigma \sigma'},\qquad \left\{ \gamma_{k\sigma},\, \gamma_{k'\sigma'} \right\}=\left\{ \gamma_{k\sigma}^{\ast},\, \gamma_{k'\sigma'}^{\ast} \right\}=0$.

\noindent {\rm (b)}\quad $\gamma_{k\,\sigma}\Psi_{BCS}=0$\quad for each $k\in\Lambda$ and for each $\sigma=\uparrow,\,\downarrow$ .

\noindent {\rm (c)}\quad $\displaystyle{
\left\{ \begin{array}{ll} \displaystyle{
\gamma_{k\uparrow}=\cos\theta_k\,C_{k\uparrow}
-\sin\theta_k\, C_{-k\downarrow}^{\ast}\,,}
& \\ \noalign{\vskip0.3cm}
\displaystyle{
\gamma_{-k\downarrow}=\sin\theta_k\,C_{k\uparrow}^{\ast}
+\cos\theta_k\,C_{-k\downarrow}\,.} &
\end{array}
\right.
}$

\noindent {\rm (d)}\quad $\displaystyle{
\left\{ \begin{array}{ll} \displaystyle{
C_{k\uparrow}=\cos\theta_k\,\gamma_{k\uparrow}
+\sin\theta_k\, \gamma_{-k\downarrow}^{\ast}\,,}
& \\ \noalign{\vskip0.3cm}
\displaystyle{
C_{-k\downarrow}=-\sin\theta_k\,\gamma_{k\uparrow}^{\ast}
+\cos\theta_k\,\gamma_{-k\downarrow}\,.} &
\end{array}
\right.
}$
\end{corollary}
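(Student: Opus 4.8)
The plan is to derive all four parts directly from the Bogoliubov transformation $\gamma_{k\sigma}=e^{iG_B}C_{k\sigma}e^{-iG_B}$ together with the earlier lemmas. For part (a), conjugation by the unitary $e^{iG_B}$ is an automorphism of the algebra of bounded operators on $\mathcal{H}=\mathbb{C}^{2^{2M}}$, so it preserves products, sums, and adjoints; consequently the canonical anticommutation relations for $C_{k\sigma}$ from Lemma \ref{lm:cacr} carry over verbatim to $\gamma_{k\sigma}$. Explicitly, $\{\gamma_{k\sigma},\gamma_{k'\sigma'}^{\ast}\}=e^{iG_B}\{C_{k\sigma},C_{k'\sigma'}^{\ast}\}e^{-iG_B}=\delta_{kk'}\delta_{\sigma\sigma'}$, and likewise for the other two relations. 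For part (b), recall $\Psi_{BCS}=e^{iG_B}|0\rangle$, so $\gamma_{k\sigma}\Psi_{BCS}=e^{iG_B}C_{k\sigma}e^{-iG_B}e^{iG_B}|0\rangle=e^{iG_B}C_{k\sigma}|0\rangle=0$ by Lemma \ref{lm:blo}.

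For parts (c) and (d) I would compute the conjugation explicitly. The cleanest route is to apply Lemma \ref{lm:formula} with $A=C_{k\sigma}$ and $B=G_B$: I need the iterated commutators $[\,[\ldots[C_{k\uparrow},G_B],\ldots],G_B]$. Using Lemma \ref{lm:cacr} one finds that $[C_{k\uparrow},iG_B]$ involves only the pair operators at wave vectors $\pm k$; a short calculation should give something like $[C_{k\uparrow},iG_B]=\theta_k\,C_{-k\downarrow}^{\ast}$ (up to sign) and then $[C_{-k\downarrow}^{\ast},iG_B]$ proportional to $-\theta_k\,C_{k\uparrow}$, so that the pair $(C_{k\uparrow},C_{-k\downarrow}^{\ast})$ rotates in a two-dimensional subspace. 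The iterated brackets therefore follow the pattern of sines and cosines (exactly as in the proof of Lemma \ref{lm:bcsstate} and Lemma \ref{lm:hkvk}), and Lemma \ref{lm:formula} sums the series to $\gamma_{k\uparrow}=\cos\theta_k\,C_{k\uparrow}-\sin\theta_k\,C_{-k\downarrow}^{\ast}$. Replacing $k\uparrow$ by $-k\downarrow$ and using $\theta_{-k}=\theta_k$ gives the second line of (c). Part (d) is then immediate by inverting the $2\times 2$ rotation matrix, i.e.\ solving the two linear relations in (c) for $C_{k\uparrow}$ and $C_{-k\downarrow}$; alternatively, conjugate the relations in (c) by $e^{-iG_B}$ (equivalently, replace $\theta_k$ by $-\theta_k$), since $C_{k\sigma}=e^{-iG_B}\gamma_{k\sigma}e^{iG_B}$.

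I expect the only real work to be the sign bookkeeping in the iterated commutators for part (c): one must track the fermionic signs $(-1)^\sharp$ from the Definition and the anticommutators carefully to pin down whether the rotation is by $+\theta_k$ or $-\theta_k$ and to confirm the cross terms vanish (i.e.\ that $[C_{k\uparrow},iG_B]$ has no component along $C_{k\uparrow}$ or along pair operators at other wave vectors). Once the base commutator is computed correctly, everything else is the same geometric-series argument already used twice in this section, so parts (a), (b), and (d) are essentially free. I would present (a) and (b) in one or two lines each, devote the bulk of the proof to establishing the commutator relation and invoking Lemma \ref{lm:formula} for (c), and dispatch (d) by the matrix inversion.
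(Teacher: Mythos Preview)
Your proposal is correct and follows essentially the same route as the paper: parts (a) and (b) are dispatched in one line each via Lemma~\ref{lm:cacr} and Lemma~\ref{lm:blo} together with the definition \eqref{eq:gamma}, part (c) is obtained by computing the iterated commutators $[\,\ldots[C_{k\sigma},iG_B],\ldots,iG_B\,]$ and summing via Lemma~\ref{lm:formula}, and part (d) is deduced directly from (c). One small caution: the second line of (c) is not literally a substitution $k\uparrow\mapsto -k\downarrow$ in the first, since the sign pattern in the odd iterated commutators for $C_{-k\downarrow}$ is $(-1)^n$ rather than $(-1)^{n-1}$; the paper handles this by writing out the two commutator chains separately, and you should do the same to get the signs right.
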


\begin{proof}
Part (a) follows immediately from Lemma \ref{lm:cacr} and \eqref{eq:gamma}.\quad The equality $C_{k\sigma}| 0 \rangle=0$ in Lemma \ref{lm:blo} yields (b).

\noindent (c)\quad  A straightforward calculation based on Lemma \ref{lm:cacr} gives
\begin{eqnarray*}
\overbrace{ [\,\ldots\,[ }^{2n-1} \,C_{k\uparrow},\,\overbrace{iG_B\,],\,\ldots,\, iG_B\,]}^{2n-1}&=&(-1)^{n-1} \theta_k^{\,2n-1}C_{-k\downarrow}^{\ast},\\
\overbrace{ [\,\ldots\, [ }^{2n} \,C_{k\uparrow},\,\overbrace{iG_B\,],\,\ldots,\, iG_B\,]}^{2n}&=&(-1)^n \theta_k^{\,2n}C_{k\uparrow}
\end{eqnarray*}
for $n=1,\,2,\,3,\,\ldots$. The first equality thus follows from Lemma \ref{lm:formula}. Similarly,
\begin{eqnarray*}
\overbrace{ [\,\ldots\,[ }^{2n-1} \,C_{-k\downarrow},\,\overbrace{iG_B\,],\,\ldots,\, iG_B\,]}^{2n-1}&=&(-1)^n \theta_k^{\,2n-1}C_{k\uparrow}^{\ast},\\
\overbrace{ [\,\ldots\, [ }^{2n} \,C_{-k\downarrow},\,\overbrace{iG_B\,],\,\ldots,\, iG_B\,]}^{2n}&=&(-1)^n \theta_k^{\,2n}C_{-k\downarrow}.
\end{eqnarray*}
The second equality follows in a similar manner.

Part (d) follows immediately from (c).
\end{proof}

\section{The energy difference between the BCS and normal states}

In this section we obtain an expression for the energy difference between the BCS and normal states, and show that the BCS state has a lower energy than the normal state.

Let $\Delta_k=0$ for all $k\in\Lambda$. Then, by \eqref{eq:theta}, $\sin2\theta_k=0$ and $\cos2\theta_k=-1$ for $k\in\Lambda$ satisfying $\xi_k<0$. Hence, $\theta_k=\pi/2$. On the other hand, $\sin2\theta_k=0$ and $\cos2\theta_k=1$ for $k\in\Lambda$ satisfying $\xi_k>0$. Hence, $\theta_k=0$. When $\Delta_k=0$ for all $k\in\Lambda$, we assume that $\theta_k=\pi/2$ for $k\in\Lambda$ satisfying $\xi_k=0$. Therefore, if $\Delta_k=0$ for all $k\in\Lambda$, then the BCS state $\Psi_{BCS}$ coincides with the ``Fermi vacuum" $\Psi_F\in\mathcal{H}=\mathbb{C}^{2^{2M}}$ by Lemma \ref{lm:bcsstate} (see \cite{bcs} and \cite{bogoliubov}). Here the Fermi vacuum $\Psi_F$ corresponds to the normal state and is defined by
\[
\Psi_F=\left\{ \,\prod_{k\; (\xi_k\leq 0)} C_{k\uparrow}^{\ast}C_{-k\downarrow}^{\ast} \right\} |0\rangle,
\]
where the symbol $k\; (\xi_k\leq 0)$ stands for $k\in\Lambda$ satisfying $\xi_k\leq 0$.

\begin{lemma}\quad Let $E_{BCS}$ be as in Corollary \ref{crl:ground} (a).

\noindent {\rm (a)}
\begin{eqnarray*}
\left(\Psi_{BCS},\, H\Psi_{BCS}\right)&=&\sum_{k\in\Lambda}\left(\xi_k-\frac{\xi_k^2}{\,\sqrt{\,\xi_k^2+\Delta_k^2\,}\,}\right) \\
& &+\frac{1}{\,4\,}\sum_{k,\,k'\in\Lambda}U_{k,\,k'}\,\frac{\Delta_k\,\Delta_{k'}}{\,\sqrt{\,\xi_k^2+\Delta_k^2\,}\sqrt{\,\xi_{k'}^2+\Delta_{k'}^2\,} \,} \\
&=& E_{BCS}\,.
\end{eqnarray*}

\noindent {\rm (b)}\quad $\displaystyle{\left( \Psi_F,\, H\Psi_F\right)=\sum_{k\in\Lambda}
\left( \xi_k-\left| \xi_k \right| \right) }$.
\end{lemma}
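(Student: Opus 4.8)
The plan is to compute the two expectation values separately, using the Bogoliubov‑transformed form of the Hamiltonian for part (a) and a direct calculation for part (b).

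For part (a), I would first observe that by Lemma \ref{lm:h-hm}, $H = H_M + \sum_{k,k'\in\Lambda} U_{k,k'}\, b_{k'}^{\ast} b_k$, so that
\[
\left(\Psi_{BCS},\, H\Psi_{BCS}\right)
= \left(\Psi_{BCS},\, H_M\Psi_{BCS}\right)
+ \sum_{k,k'\in\Lambda} U_{k,k'}\left(\Psi_{BCS},\, b_{k'}^{\ast} b_k\,\Psi_{BCS}\right).
\]
The first term is immediately $E_{BCS}$ by Corollary \ref{crl:ground}(a) together with Corollary \ref{crl:properties}(b) (since $\gamma_{k\sigma}\Psi_{BCS}=0$ kills the number‑operator part of $H_M$ in Proposition \ref{prp:hm}). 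For the second term I would note that by the definition \eqref{eq:vev}, $b_k\Psi_{BCS} = C_{-k\downarrow}C_{k\uparrow}\Psi_{BCS} - \tfrac12\sin 2\theta_k\,\Psi_{BCS}$, and similarly for $b_k^{\ast}\Psi_{BCS}$; hence by Corollary \ref{crl:bcsexpectationvalue}(a) the vector $b_k\Psi_{BCS}$ has zero component along $\Psi_{BCS}$, i.e. $\left(\Psi_{BCS}, b_k\Psi_{BCS}\right)=0$ and likewise $\left(\Psi_{BCS}, b_{k'}^{\ast}\Psi_{BCS}\right)=0$. The cleanest way to evaluate $\left(\Psi_{BCS}, b_{k'}^{\ast} b_k \Psi_{BCS}\right)$ is to use $U_{k,k}=0$ to restrict to $k\neq k'$, and then exploit that for $k\neq k'$ the operators $b_k$ and $b_{k'}$ act on disjoint pairs of modes, so that $\left(\Psi_{BCS}, b_{k'}^{\ast} b_k \Psi_{BCS}\right) = \overline{\left(\Psi_{BCS}, b_{k'}\Psi_{BCS}\right)}\,\left(\Psi_{BCS}, b_k\Psi_{BCS}\right) = 0$ — but this needs care, because $b_{k'}\Psi_{BCS}$ is generally not proportional to $\Psi_{BCS}$. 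So instead I would expand $\Psi_{BCS}$ as the product over modes from Lemma \ref{lm:bcsstate}, write $\Psi_{BCS} = \phi_k\otimes\psi_k$ where $\phi_k$ involves only the $(k\uparrow,-k\downarrow)$ modes and $\psi_k$ the rest, observe $b_k$ acts only on $\phi_k$ with $\left(\phi_k, b_k\phi_k\right)=0$, and for $k\neq k'$ factor further; the upshot is that $\left(\Psi_{BCS}, b_{k'}^{\ast} b_k\Psi_{BCS}\right)=0$ for all $k,k'$ once $U_{k,k}=0$ is invoked. Hence $\left(\Psi_{BCS}, H\Psi_{BCS}\right) = E_{BCS}$, and the first displayed line in (a) follows by substituting $\left(\Psi_{BCS}, C_{-k\downarrow}C_{k\uparrow}\Psi_{BCS}\right) = \tfrac12\sin 2\theta_k = \tfrac12\Delta_k/\sqrt{\xi_k^2+\Delta_k^2}$ from Corollary \ref{crl:bcsexpectationvalue}(a) and \eqref{eq:theta} into $E_{BCS}$, rewriting $\xi_k - \sqrt{\xi_k^2+\Delta_k^2} + \tfrac12\Delta_k^2/\sqrt{\xi_k^2+\Delta_k^2} = \xi_k - \xi_k^2/\sqrt{\xi_k^2+\Delta_k^2} - \tfrac12\Delta_k^2/\sqrt{\xi_k^2+\Delta_k^2}$ and then replacing $-\tfrac12\Delta_k^2/\sqrt{\xi_k^2+\Delta_k^2}$ using the gap equation \eqref{eq:gapequation} multiplied through by $\tfrac12\Delta_k/\sqrt{\xi_k^2+\Delta_k^2}$.

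For part (b), I would argue directly. From Lemma \ref{lm:blo} and Lemma \ref{lm:cacr}, $\Psi_F$ is the state in which exactly the modes $k\uparrow, -k\downarrow$ with $\xi_k\leq 0$ are occupied and all others empty. The interaction term of $H$ annihilates $\Psi_F$: applying $C_{-k\downarrow}C_{k\uparrow}$ to $\Psi_F$ gives a vector orthogonal to every term in the expansion of $C_{k'\uparrow}^{\ast}C_{-k'\downarrow}^{\ast}\Psi_F$ unless $k=k'$, in which case $C_{k'\uparrow}^{\ast}C_{-k'\downarrow}^{\ast}C_{-k\downarrow}C_{k\uparrow}\Psi_F = \delta_{k,k'}(\text{occupation factor})\,\Psi_F$ but with $U_{k,k}=0$ this contributes nothing; more simply, $\left(\Psi_F, C_{k'\uparrow}^{\ast}C_{-k'\downarrow}^{\ast}C_{-k\downarrow}C_{k\uparrow}\Psi_F\right)$ is nonzero only if the occupations match up, forcing $k=k'$, killed again by $U_{k,k}=0$. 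Hence only the kinetic term survives, and $\left(\Psi_F, \sum_{k,\sigma}\xi_k C_{k\sigma}^{\ast}C_{k\sigma}\Psi_F\right) = \sum_{k\,(\xi_k\leq 0)} 2\xi_k = \sum_{k\in\Lambda}(\xi_k - |\xi_k|)$, using that $2\xi_k = \xi_k - |\xi_k|$ exactly when $\xi_k\leq 0$ and $\xi_k - |\xi_k| = 0$ when $\xi_k > 0$.

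The main obstacle is the vanishing of $\left(\Psi_{BCS}, \sum_{k,k'} U_{k,k'}\, b_{k'}^{\ast} b_k\,\Psi_{BCS}\right)$ in part (a): one must be careful that $b_{k'}\Psi_{BCS}$ is not simply a multiple of $\Psi_{BCS}$, so the cross terms with $k\neq k'$ require the tensor‑factorization argument across modes rather than a one‑line "expectation of $b_k$ is zero." Everything else is bookkeeping with \eqref{eq:theta}, Corollary \ref{crl:bcsexpectationvalue}, and the gap equation \eqref{eq:gapequation}.
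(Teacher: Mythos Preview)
Your argument is correct, but it takes a different route from the paper in both parts.

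For (a), the paper computes the two constituents of $H$ directly on $\Psi_{BCS}$: using Lemma~\ref{lm:bcsstate} together with Lemmas~\ref{lm:blo} and~\ref{lm:cacr} one gets $\left(\Psi_{BCS},\,C_{k\sigma}^{\ast}C_{k\sigma}\Psi_{BCS}\right)=\sin^2\theta_k$ and $\left(\Psi_{BCS},\,C_{k'\uparrow}^{\ast}C_{-k'\downarrow}^{\ast}C_{-k\downarrow}C_{k\uparrow}\Psi_{BCS}\right)=\tfrac14\sin 2\theta_k\sin 2\theta_{k'}$, and then \eqref{eq:theta}, \eqref{eq:gapequation} and Corollary~\ref{crl:bcsexpectationvalue}(a) give both the displayed expression and the identification with $E_{BCS}$. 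Your approach instead splits $H=H_M+H'$ and shows that $\left(\Psi_{BCS},\,H'\Psi_{BCS}\right)=0$ via a tensor--factorization of $\Psi_{BCS}$ over the pairs $(k\!\uparrow,-k\!\downarrow)$; this is essentially the content of Lemma~\ref{lm:hprimeebcs}(b), which the paper proves later (and differently) in Section~5. Having $\left(\Psi_{BCS},H\Psi_{BCS}\right)=E_{BCS}$ in hand, you then recover the explicit formula by algebra with the gap equation. The paper's route is shorter and self--contained at this point in the text; yours reuses the $H_M/H'$ decomposition already in place and anticipates Lemma~\ref{lm:hprimeebcs}(b). One small phrasing issue: it is not that $\left(\Psi_{BCS},\,b_{k'}^{\ast}b_k\Psi_{BCS}\right)=0$ for \emph{all} $k,k'$ --- the diagonal term $k=k'$ need not vanish --- but the off--diagonal terms vanish by factorization and the diagonal ones are killed by $U_{k,k}=0$, so the sum vanishes, which is what you need.

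For (b), the paper simply specializes (a) to $\Delta_k=0$, noting that $\Psi_{BCS}$ then reduces to $\Psi_F$; your direct computation on $\Psi_F$ is equally valid and arguably more transparent.
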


\begin{proof} (a)
\begin{eqnarray*}
\left(\Psi_{BCS},\, H\Psi_{BCS}\right)&=&\sum_{k\in\Lambda,\,\sigma=\uparrow,\,\downarrow}
\xi_k \left(\Psi_{BCS},\, C_{k\,\sigma}^{\ast}C_{k\,\sigma}\Psi_{BCS}\right)\\
& &\quad +\sum_{k,\,k'\in\Lambda}U_{k,\,k'}\left(\Psi_{BCS},\, 
C_{k'\uparrow}^{\ast}C_{-k'\downarrow}^{\ast}C_{-k\downarrow} C_{k\uparrow}
\Psi_{BCS}\right).
\end{eqnarray*}
A straightforward calculation based on Lemmas \ref{lm:blo} and \ref{lm:cacr}
gives
\begin{eqnarray*}
& &\left(\Psi_{BCS},\, C_{k\,\sigma}^{\ast}C_{k\,\sigma}\Psi_{BCS}\right)=
\sin^2\theta_k\,,\\
& &\left(\Psi_{BCS},\,C_{k'\uparrow}^{\ast}C_{-k'\downarrow}^{\ast}C_{-k\downarrow} C_{k\uparrow}\Psi_{BCS}\right)=\frac{1}{\,4\,}\sin 2\theta_k\,\sin 2\theta_{k'}\,.
\end{eqnarray*}
Part (a) thus follows from \eqref{eq:gapequation}, \eqref{eq:theta} and Corollary \ref{crl:bcsexpectationvalue} (a). Part (b) follows immediately from (a). 
\end{proof}

Combining this lemma with \eqref{eq:gapequation} immediately yields the following.

\begin{proposition} The BCS state $\Psi_{BCS}$ has a lower energy than the
Fermi vacuum $\Psi_F$ (the normal state), i.e.,
\[
\left( \Psi_{BCS},\, H\Psi_{BCS}\right)-\left( \Psi_F,\, H\Psi_F \right)=-\frac{1}{\,2\,}
\sum_{k\in\Lambda} \frac{\,\left( \sqrt{\,\xi_k^2+\Delta_k^2\,}-\left| \xi_k\right|\right)^2\,}{\sqrt{\,\xi_k^2+\Delta_k^2\,}}<0\,.
\]
\end{proposition}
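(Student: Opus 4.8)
The plan is to compute the two energies separately using the preceding lemma and then subtract. From part (a) of the lemma I already have the closed form
\[
\left( \Psi_{BCS},\, H\Psi_{BCS}\right)=\sum_{k\in\Lambda}\left(\xi_k-\frac{\xi_k^2}{\,\sqrt{\,\xi_k^2+\Delta_k^2\,}\,}\right)+\frac{1}{\,4\,}\sum_{k,\,k'\in\Lambda}U_{k,\,k'}\,\frac{\Delta_k\,\Delta_{k'}}{\,\sqrt{\,\xi_k^2+\Delta_k^2\,}\sqrt{\,\xi_{k'}^2+\Delta_{k'}^2\,} \,},
\]
and from part (b) I have $\left( \Psi_F,\, H\Psi_F\right)=\sum_{k\in\Lambda}\left( \xi_k-\left| \xi_k\right|\right)$. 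The key move is to rewrite the double sum using the gap equation \eqref{eq:gapequation}: since $\displaystyle\sum_{k'\in\Lambda}U_{k,\,k'}\,\frac{\Delta_{k'}}{\sqrt{\xi_{k'}^2+\Delta_{k'}^2}}=-2\Delta_k$, the double sum collapses to $-\dfrac12\sum_{k\in\Lambda}\dfrac{\Delta_k^2}{\sqrt{\xi_k^2+\Delta_k^2}}$. Hence
\[
\left( \Psi_{BCS},\, H\Psi_{BCS}\right)=\sum_{k\in\Lambda}\left(\xi_k-\frac{\xi_k^2}{\sqrt{\xi_k^2+\Delta_k^2}}-\frac12\,\frac{\Delta_k^2}{\sqrt{\xi_k^2+\Delta_k^2}}\right).
\]

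Next I would subtract termwise. The $\xi_k$ contributions cancel, leaving
\[
\left( \Psi_{BCS},\, H\Psi_{BCS}\right)-\left( \Psi_F,\, H\Psi_F\right)=\sum_{k\in\Lambda}\left(|\xi_k|-\frac{\xi_k^2}{\sqrt{\xi_k^2+\Delta_k^2}}-\frac12\,\frac{\Delta_k^2}{\sqrt{\xi_k^2+\Delta_k^2}}\right).
\]
Now I would check the algebraic identity, for each fixed $k$, that the bracketed quantity equals $-\dfrac{1}{2}\,\dfrac{\left(\sqrt{\xi_k^2+\Delta_k^2}-|\xi_k|\right)^2}{\sqrt{\xi_k^2+\Delta_k^2}}$. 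Writing $s=\sqrt{\xi_k^2+\Delta_k^2}$ and using $\Delta_k^2=s^2-\xi_k^2$, the bracket is $|\xi_k|-\xi_k^2/s-(s^2-\xi_k^2)/(2s)=|\xi_k|-s/2-\xi_k^2/(2s)=-(s^2-2s|\xi_k|+\xi_k^2)/(2s)=-(s-|\xi_k|)^2/(2s)$, which is exactly the claimed summand. Summing over $k\in\Lambda$ gives the displayed formula.

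Finally, strict negativity: each summand $-(s-|\xi_k|)^2/(2s)$ is $\le 0$, and it is strictly negative whenever $\Delta_k\neq 0$ (equivalently $s>|\xi_k|$). Since we assumed the existence of a nontrivial solution $\Delta_k$ to the gap equation — in particular $\Delta_k$ is not identically zero — at least one term is strictly negative, so the whole sum is $<0$. I anticipate no real obstacle here; the only point requiring a little care is the per-$k$ algebraic simplification and the observation that a nontrivial gap function is what makes the inequality strict rather than merely non-strict.
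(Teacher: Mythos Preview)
Your proof is correct and follows precisely the route the paper indicates: the paper's proof consists of the single sentence that the proposition follows from the preceding lemma combined with the gap equation \eqref{eq:gapequation}, and you have simply filled in the omitted algebra. The collapse of the double sum via \eqref{eq:gapequation}, the termwise subtraction, and the per-$k$ identity with $s=\sqrt{\xi_k^2+\Delta_k^2}$ are all carried out correctly.
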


\section{A new superconducting state having a lower energy than the BCS state}

In this section we introduce a new superconducting state explicitly and show that this new state has a lower energy than the BCS state, and hence than the normal state.

Set $E_k=\sqrt{\,\xi_k^2+\Delta_k^2\,}$, $k\in\Lambda$ and set $B_k=C_{-k\downarrow}C_{k\uparrow}$. We abbreviate $\sin\theta_k$ (resp. $\cos\theta_k$) to $S_k$ (resp. to $C_k$). We consider the following vector in $\mathcal{H}=\mathbb{C}^{2^{2M}}$:
\begin{equation*}
\Psi=\frac{\Psi_{BCS}+\Phi}{\,\sqrt{\, 1+\left( \Phi,\,\Phi \right) \,}\,},
\end{equation*}
where \quad $\displaystyle{
\Phi=\frac{1}{\,2\,}\sum_{p,\,p'\in\Lambda} \frac{\,U_{p,\,p'}
\left( C_p^2\,S_{p'}^2+C_{p'}^2\,S_p^2\right)\,}{E_p+E_{p'}}\,
\gamma_{p\uparrow}^{\ast}\,\gamma_{-p\downarrow}^{\ast}\,
\gamma_{p'\uparrow}^{\ast}\,\gamma_{-p'\downarrow}^{\ast}\,\Psi_{BCS}}$ .

We prepare some lemmas.

\begin{lemma}\label{lm:ebcs} {\rm (a)}\quad $\displaystyle{\left( \Psi_{BCS},\,\Phi \right)=0}$.

\noindent {\rm (b)}
\[
H_M\Phi=E_{BCS}\Phi+2\sum_{p,\,p'\in\Lambda} \frac{\,E_{p'}U_{p,\,p'}\left( C_p^2\,S_{p'}^2+C_{p'}^2\,S_p^2\right)\,}{E_p+E_{p'}}\,
\gamma_{p\uparrow}^{\ast}\,\gamma_{-p\downarrow}^{\ast}\,
\gamma_{p'\uparrow}^{\ast}\,\gamma_{-p'\downarrow}^{\ast}\,\Psi_{BCS}.
\]

\noindent {\rm (c)}\quad $\displaystyle{\left( \Psi,\,H_M\Psi \right)=E_{BCS}
+\frac{1}{\,1+\left( \Phi,\,\Phi \right)\,}
\sum_{p,\,p'\in\Lambda} \frac{\,U_{p,\,p'}^{\,2}
\left( C_p^2\,S_{p'}^2+C_{p'}^2\,S_p^2\right)^2\,}{E_p+E_{p'}}}$.
\end{lemma}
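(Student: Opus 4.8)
The plan is to prove the three parts in sequence, since each feeds into the next. For part (a), I would use Corollary \ref{crl:properties}(b), which says $\gamma_{k\sigma}\Psi_{BCS}=0$, together with the anticommutation relations of Corollary \ref{crl:properties}(a). Since $\Phi$ is a linear combination of vectors of the form $\gamma_{p\uparrow}^{\ast}\gamma_{-p\downarrow}^{\ast}\gamma_{p'\uparrow}^{\ast}\gamma_{-p'\downarrow}^{\ast}\Psi_{BCS}$, the inner product $(\Psi_{BCS},\Phi)$ reduces to sums of terms $(\Psi_{BCS},\gamma_{p\uparrow}^{\ast}\gamma_{-p\downarrow}^{\ast}\gamma_{p'\uparrow}^{\ast}\gamma_{-p'\downarrow}^{\ast}\Psi_{BCS})$. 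Taking adjoints and moving the $\gamma$'s (not the $\gamma^{\ast}$'s) to act on the left copy of $\Psi_{BCS}$, or equivalently observing that $\Psi_{BCS}$ has a definite $\gamma$-particle number of $0$ while each term in $\Phi$ has $\gamma$-particle number $4$, forces orthogonality. I expect this to be short.

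For part (b), I would invoke the diagonal form of $H_M$ from Proposition \ref{prp:hm}, namely $H_M=\sum_{k,\sigma}E_k\,\gamma_{k\sigma}^{\ast}\gamma_{k\sigma}+E_{BCS}$ (recognizing that the constant sum there equals $E_{BCS}$ by Corollary \ref{crl:ground}(a)). Then $H_M$ acts on each basis vector $\gamma_{p\uparrow}^{\ast}\gamma_{-p\downarrow}^{\ast}\gamma_{p'\uparrow}^{\ast}\gamma_{-p'\downarrow}^{\ast}\Psi_{BCS}$ simply by counting: the number operator $\sum_{k,\sigma}\gamma_{k\sigma}^{\ast}\gamma_{k\sigma}$ returns the eigenvalue — and here the key point is that for a generic term with $p\neq p'$ the eigenvalue of $\sum_k E_k(N_{k\uparrow}+N_{k\downarrow})$ is $2E_p+2E_{p'}$, which is $2(E_p+E_{p'})$; combined with the coefficient $\tfrac12 U_{p,p'}(C_p^2 S_{p'}^2+C_{p'}^2 S_p^2)/(E_p+E_{p'})$ in $\Phi$, the $(E_p+E_{p'})$ cancels and leaves the stated $2E_{p'}U_{p,p'}(\cdots)/(E_p+E_{p'})$ after symmetrizing the double sum in $p\leftrightarrow p'$ (using $U_{p',p}=U_{p,p'}$ and $E_{-k}=E_k$). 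I would note that the $U_{p,p}=0$ hypothesis kills the diagonal $p=p'$ terms, so one need not worry about $\gamma^{\ast}$ squaring to zero there. Adding $E_{BCS}\Phi$ gives the formula.

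For part (c), I would write $(\Psi,H_M\Psi)=\dfrac{1}{1+(\Phi,\Phi)}\bigl((\Psi_{BCS}+\Phi),H_M(\Psi_{BCS}+\Phi)\bigr)$ and expand into four terms. The cross terms $(\Psi_{BCS},H_M\Phi)$ and $(\Phi,H_M\Psi_{BCS})$ vanish: $H_M\Psi_{BCS}=E_{BCS}\Psi_{BCS}$ by Corollary \ref{crl:ground}(a), so both reduce to multiples of $(\Psi_{BCS},\Phi)=0$ from part (a), and likewise $(\Psi_{BCS},H_M\Phi)$ splits via part (b) into $E_{BCS}(\Psi_{BCS},\Phi)$ plus a multiple of $(\Psi_{BCS},\gamma^{\ast}\gamma^{\ast}\gamma^{\ast}\gamma^{\ast}\Psi_{BCS})=0$. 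The term $(\Psi_{BCS},H_M\Psi_{BCS})=E_{BCS}$. The remaining term $(\Phi,H_M\Phi)$ is computed from part (b): $(\Phi,E_{BCS}\Phi)=E_{BCS}(\Phi,\Phi)$, and the inner product of $\Phi$ with the second piece of $H_M\Phi$ is, by orthonormality of the $\gamma^{\ast}\gamma^{\ast}\gamma^{\ast}\gamma^{\ast}\Psi_{BCS}$ vectors (up to a combinatorial normalization factor that must be tracked carefully), a sum $\sum_{p,p'}\dfrac{2 U_{p,p'}^2(C_p^2 S_{p'}^2+C_{p'}^2 S_p^2)^2}{(E_p+E_{p'})^2}\cdot(\text{norm factor})$. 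Assembling, $(1+(\Phi,\Phi))(\Psi,H_M\Psi)=E_{BCS}(1+(\Phi,\Phi))+\sum_{p,p'}\dfrac{U_{p,p'}^2(C_p^2 S_{p'}^2+C_{p'}^2 S_p^2)^2}{E_p+E_{p'}}$, which rearranges to the claim.

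The main obstacle I anticipate is the bookkeeping in part (c) — specifically, getting the normalization of the four-$\gamma^{\ast}$ states right. The vectors $\gamma_{p\uparrow}^{\ast}\gamma_{-p\downarrow}^{\ast}\gamma_{p'\uparrow}^{\ast}\gamma_{-p'\downarrow}^{\ast}\Psi_{BCS}$ for unordered pairs $\{p,p'\}$ are orthonormal, but the double sum over ordered pairs $(p,p')$ in both $\Phi$ and in $H_M\Phi$ double-counts each unordered pair, and the coefficients are symmetric in $p\leftrightarrow p'$; tracking the resulting factors of $2$ (and the sign from anticommuting $\gamma^{\ast}$'s when reordering) is where an error is most likely to creep in. One clean way around this is to never reorder: treat the expression $\sum_{p,p'}c_{p,p'}\,\gamma_{p\uparrow}^{\ast}\gamma_{-p\downarrow}^{\ast}\gamma_{p'\uparrow}^{\ast}\gamma_{-p'\downarrow}^{\ast}\Psi_{BCS}$ as a formal object, compute $(\Phi,H_M\Phi)$ by pairing term-against-term using the canonical anticommutation relations of Corollary \ref{crl:properties}(a) and $\gamma_{k\sigma}\Psi_{BCS}=0$ directly, and let the combinatorics emerge from the Wick-type contractions rather than from an a priori orthonormal-basis count. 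The $U_{p,p}=0$ condition is essential throughout to avoid ill-defined $(\gamma^{\ast})^2$ terms.
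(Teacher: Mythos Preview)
Your proposal is correct and follows essentially the same approach as the paper: part (a) via Corollary~\ref{crl:properties}(b), part (b) via the diagonal form of $H_M$ in Proposition~\ref{prp:hm}, and part (c) by expanding the four inner-product terms and using (a), (b). The only cosmetic difference is in (b): the paper computes $\gamma_{k\uparrow}^{\ast}\gamma_{k\uparrow}\Phi$ and $\gamma_{-k\downarrow}^{\ast}\gamma_{-k\downarrow}\Phi$ for each fixed $k$ (each equals the single sum $\sum_{p}\tfrac{U_{p,k}(C_p^2S_k^2+C_k^2S_p^2)}{E_p+E_k}\gamma_{p\uparrow}^{\ast}\gamma_{-p\downarrow}^{\ast}\gamma_{k\uparrow}^{\ast}\gamma_{-k\downarrow}^{\ast}\Psi_{BCS}$) and then sums $\sum_k E_k(\cdots)$, which lands directly on the $2E_{p'}/(E_p+E_{p'})$ form without your extra symmetrization step.
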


\begin{proof}
Part (a) follows immediately from Corollary \ref{crl:properties} (b).

\noindent (b)\quad By Proposition \ref{prp:hm},\quad $\displaystyle{
H_M\Phi=E_{BCS}\Phi+\sum_{k\in\Lambda} E_k\left(
\gamma_{k\,\uparrow}^{\ast}\gamma_{k\,\uparrow}
+\gamma_{-k\,\downarrow}^{\ast}\gamma_{-k\,\downarrow} \right)\Phi}$ .

\noindent A straightforward calculation based on Corollary \ref{crl:properties} (a) gives
\[
\gamma_{k\,\uparrow}^{\ast}\gamma_{k\,\uparrow}\Phi
=\gamma_{-k\,\downarrow}^{\ast}\gamma_{-k\,\downarrow}\Phi
=\sum_{p\in\Lambda} \frac{\,U_{p,\,k}\left( C_p^2\,S_k^2+C_k^2\,S_p^2\right)\,}{E_p+E_k}\, \gamma_{p\uparrow}^{\ast}\,\gamma_{-p\downarrow}^{\ast}\,
\gamma_{k\uparrow}^{\ast}\,\gamma_{-k\downarrow}^{\ast}\,\Psi_{BCS}\,,
\]
from which (b) follows.

\noindent (c)\quad By (a) and (b),
\begin{eqnarray*}
\left( \Psi,\,H_M\Psi \right)&=&\frac{1}{\,1+\left( \Phi,\,\Phi \right)\,}
\left\{\, E_{BCS}+\left( \Phi,\,H_M\Phi \right)\,\right\} \\
&=&E_{BCS}+\frac{2}{\,1+\left( \Phi,\,\Phi \right)\,}
\sum_{p,\,p'\in\Lambda} \frac{\,E_{p'}U_{p,\,p'}\left( C_p^2\,S_{p'}^2
+C_{p'}^2\,S_p^2\right)\,}{E_p+E_{p'}}\times \\
& &\qquad\qquad \times\left( \Phi,\,
\gamma_{p\uparrow}^{\ast}\,\gamma_{-p\downarrow}^{\ast}\,
\gamma_{p'\uparrow}^{\ast}\,\gamma_{-p'\downarrow}^{\ast}\,\Psi_{BCS}\right).
\end{eqnarray*}
Part (c) thus follows from Corollary \ref{crl:properties} (a).
\end{proof}

Set $H'=H-H_M$ . Then, by Lemma \ref{lm:h-hm},
\begin{equation}\label{eq:hprime}
H'=\sum_{k,\,k'\in\Lambda}U_{k,\,k'}\left\{ B_{k'}^{\ast}\,B_k
-C_{k'}\,S_{k'}\left( B_k^{\ast}+B_k \right)+C_k\,S_k\,C_{k'}\,S_{k'}\right\}.
\end{equation}

\begin{lemma}\label{lm:hprimeebcs} Let $H'$ be as in \eqref{eq:hprime}.

\noindent {\rm (a)}\quad $\displaystyle{
H'\Psi_{BCS}=-\sum_{k,\,k'\in\Lambda}U_{k,\,k'}\,S_k^2\,C_{k'}^2\,
\gamma_{k\uparrow}^{\ast}\,\gamma_{-k\downarrow}^{\ast}\,
\gamma_{k'\uparrow}^{\ast}\,\gamma_{-k'\downarrow}^{\ast}\,\Psi_{BCS}
}$ .

\noindent {\rm (b)}\quad $\displaystyle{\left( \Psi_{BCS},\,H'\Psi_{BCS} \right)=0}$.

\noindent {\rm (c)}\quad $\displaystyle{\left( \Phi,\,H'\Psi_{BCS} \right)=
-\frac{1}{\,2\,}\sum_{p,\,p'\in\Lambda} \frac{\,U_{p,\,p'}^{\,2}
\left( C_p^2\,S_{p'}^2+C_{p'}^2\,S_p^2\right)^2\,}{E_p+E_{p'}}
}$ .
\end{lemma}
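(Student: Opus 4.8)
\textbf{Proof proposal for Lemma~\ref{lm:hprimeebcs}.}
The plan is to prove the three parts in the order (a) $\Rightarrow$ (b) $\Rightarrow$ (c), since each later part follows quickly from the explicit formula obtained in (a). For part~(a), I would start from the expression \eqref{eq:hprime} for $H'$ and apply it term by term to $\Psi_{BCS}$. The key facts are Corollary~\ref{crl:properties}(b), which says $\gamma_{k\sigma}\Psi_{BCS}=0$, and the inverse Bogoliubov relations in Corollary~\ref{crl:properties}(d), which let me rewrite $B_k=C_{-k\downarrow}C_{k\uparrow}$ and $B_k^{\ast}=C_{k\uparrow}^{\ast}C_{-k\downarrow}^{\ast}$ in terms of the $\gamma$'s. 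Expanding $B_k$ via (d) and pushing the annihilation operators $\gamma_{k\uparrow},\gamma_{-k\downarrow}$ to the right (they kill $\Psi_{BCS}$), one finds $B_k\Psi_{BCS}=C_kS_k\Psi_{BCS}-S_k^2\,\gamma_{k\uparrow}^{\ast}\gamma_{-k\downarrow}^{\ast}\Psi_{BCS}$ and similarly $B_k^{\ast}\Psi_{BCS}=C_kS_k\Psi_{BCS}+C_k^2\,\gamma_{k\uparrow}^{\ast}\gamma_{-k\downarrow}^{\ast}\Psi_{BCS}$ (up to the anticommutation bookkeeping, using Corollary~\ref{crl:properties}(a)). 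Then $B_{k'}^{\ast}B_k\Psi_{BCS}$ is computed by first applying $B_k$, then $B_{k'}^{\ast}$, again moving $\gamma$-annihilators right; the constant and single-pair terms must cancel against the $-C_{k'}S_{k'}(B_k^{\ast}+B_k)$ and $+C_kS_kC_{k'}S_{k'}$ pieces, leaving exactly the four-$\gamma$ term $-U_{k,k'}S_k^2C_{k'}^2\,\gamma_{k\uparrow}^{\ast}\gamma_{-k\downarrow}^{\ast}\gamma_{k'\uparrow}^{\ast}\gamma_{-k'\downarrow}^{\ast}\Psi_{BCS}$ after summing over $k,k'$.

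Part~(b) is then immediate: in $\left(\Psi_{BCS},H'\Psi_{BCS}\right)$ the right-hand side of (a) is a linear combination of vectors of the form $\gamma_{k\uparrow}^{\ast}\gamma_{-k\downarrow}^{\ast}\gamma_{k'\uparrow}^{\ast}\gamma_{-k'\downarrow}^{\ast}\Psi_{BCS}$, each of which is orthogonal to $\Psi_{BCS}$ because $\gamma_{k\sigma}\Psi_{BCS}=0$ (take the adjoint and use Corollary~\ref{crl:properties}(a),(b)). For part~(c), I would pair $\Phi$ against the formula from (a). Both $\Phi$ and $H'\Psi_{BCS}$ are spanned by the four-$\gamma$ vectors $\gamma_{p\uparrow}^{\ast}\gamma_{-p\downarrow}^{\ast}\gamma_{p'\uparrow}^{\ast}\gamma_{-p'\downarrow}^{\ast}\Psi_{BCS}$, and by Corollary~\ref{crl:properties}(a) these form an orthogonal family with a computable norm (each equal to $1$ when $p\neq p'$, with the usual care about the ordering/anticommutation signs and the $p=p'$ cases, which vanish since $(\gamma_{p\uparrow}^{\ast})^2=0$). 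Taking the inner product and using $U_{p,p'}=U_{p',p}$ to symmetrize, the double sum $-\tfrac12\sum_{k,k'}U_{k,k'}S_k^2C_{k'}^2\cdot\overline{\left(\text{coeff. of }\Phi\right)}$ collapses to $-\tfrac12\sum_{p,p'}U_{p,p'}^2(C_p^2S_{p'}^2+C_{p'}^2S_p^2)^2/(E_p+E_{p'})$, which is the claimed identity.

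The main obstacle I anticipate is the sign and combinatorial bookkeeping in part~(a): carefully tracking the anticommutators when moving $\gamma_{k\uparrow},\gamma_{-k\downarrow}$ past $\gamma_{k'\uparrow}^{\ast},\gamma_{-k'\downarrow}^{\ast}$ for $k\neq k'$ (which produces signs) versus $k=k'$ (which produces $\delta$-contractions via Corollary~\ref{crl:properties}(a)), and verifying that all the lower-order ($0$-pair and $1$-pair) contributions of $B_{k'}^{\ast}B_k\Psi_{BCS}$ exactly match the subtracted terms $-C_{k'}S_{k'}(B_k^{\ast}+B_k)+C_kS_kC_{k'}S_{k'}$ so that only the four-$\gamma$ term survives. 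Once the correct normalization of the four-$\gamma$ basis vectors is pinned down, parts~(b) and~(c) are routine orthogonality computations, with the only subtlety in (c) being the symmetrization of the coefficient $C_p^2S_{p'}^2+C_{p'}^2S_p^2$ against the unsymmetrized $S_k^2C_{k'}^2$ coming from (a).
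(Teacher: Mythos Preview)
Your proposal is correct and follows essentially the same route as the paper: for (a) the paper also derives the identity $B_k=C_kS_k(1-\gamma_{k\uparrow}^{\ast}\gamma_{k\uparrow}-\gamma_{-k\downarrow}^{\ast}\gamma_{-k\downarrow})-C_k^2\gamma_{k\uparrow}\gamma_{-k\downarrow}-S_k^2\gamma_{k\uparrow}^{\ast}\gamma_{-k\downarrow}^{\ast}$ from Corollary~\ref{crl:properties}(a),(d), computes $B_k\Psi_{BCS}$, $B_k^{\ast}\Psi_{BCS}$, and $B_{k'}^{\ast}B_k\Psi_{BCS}$ exactly as you describe, and then notes the cancellation; parts (b) and (c) are dispatched in one line by invoking (a) together with Corollary~\ref{crl:properties}(a),(b), which is precisely the orthogonality argument you outline.
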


\begin{proof} (a)\quad Corollary \ref{crl:properties} (a) and (d) yields
\begin{equation}\label{eq:bk}
B_k=C_k\,S_k\left( 1
-\gamma_{k\uparrow}^{\ast}\gamma_{k\uparrow}
-\gamma_{-k\downarrow}^{\ast}\gamma_{-k\downarrow} \right)
-C_k^2\,\gamma_{k\uparrow}\gamma_{-k\downarrow}
-S_k^2\,\gamma_{k\uparrow}^{\ast}\gamma_{-k\downarrow}^{\ast}\,.
\end{equation}
Hence,\begin{eqnarray*}
B_k\Psi_{BCS}&=&\left( C_k\,S_k-S_k^2\,\gamma_{k\uparrow}^{\ast}\gamma_{-k\downarrow}^{\ast} \right)\Psi_{BCS}\,,\\
B_k^{\ast}\Psi_{BCS}&=&\left( C_k\,S_k+C_k^2\,\gamma_{k\uparrow}^{\ast}\gamma_{-k\downarrow}^{\ast} \right)\Psi_{BCS}\,,\\
B_{k'}^{\ast}\,B_k\Psi_{BCS}
&=&\left( C_k\,S_k\,C_{k'}\,S_{k'}+C_k\,S_k\,C_{k'}^2\gamma_{k'\uparrow}^{\ast}
\gamma_{-k'\downarrow}^{\ast}-S_k^2\,C_{k'}\,S_{k'}\gamma_{k\uparrow}^{\ast}
\gamma_{-k\downarrow}^{\ast}\right. \\
& & \left. -S_k^2\,C_{k'}^2\gamma_{k\uparrow}^{\ast}\,
\gamma_{-k\downarrow}^{\ast}\,\gamma_{k'\uparrow}^{\ast}\,
\gamma_{-k'\downarrow}^{\ast} \right)\Psi_{BCS}\,.
\end{eqnarray*}
Part (a) thus follows. Parts (b) and (c) follow immediately from (a) and Corollary \ref{crl:properties} (a), (b).
\end{proof}

\begin{lemma}\label{lm:bkphi} {\rm (a)}
\begin{eqnarray*}
\hspace{-5mm}
B_k\Phi&=&\left( C_k\,S_k-S_k^2\,\gamma_{k\uparrow}^{\ast}\gamma_{-k\downarrow}^{\ast}\right)\Phi+C_k^2\sum_{p\in\Lambda} \frac{\,U_{k,\,p}
\left( C_k^2\,S_p^2+C_p^2\,S_k^2 \right)\,}{E_k+E_p}\,
\gamma_{p\uparrow}^{\ast}\,\gamma_{-p\downarrow}^{\ast}\Psi_{BCS}\\
& &-2C_k\,S_k \sum_{p\in\Lambda} \frac{\,U_{k,\,p}
\left( C_k^2\,S_p^2+C_p^2\,S_k^2 \right)\,}{E_k+E_p}\,
\gamma_{k\uparrow}^{\ast}\,\gamma_{-k\downarrow}^{\ast}
\gamma_{p\uparrow}^{\ast}\,\gamma_{-p\downarrow}^{\ast}\Psi_{BCS}\,.
\end{eqnarray*}

\noindent {\rm (b)}\quad $\displaystyle{
\left( \Phi,\,B_k\Phi \right)=C_k\,S_k\left\{ \left( \Phi,\,\Phi \right)
-2\sum_{p\in\Lambda}\frac{\,U_{k,\,p}^{\,2}
\left( C_k^2\,S_p^2+C_p^2\,S_k^2 \right)^2\,}{\left( E_k+E_p\right)^2} \right\} }$ .

\noindent {\rm (c)}
\begin{eqnarray*}
& &\left( \Phi,\,B_{k'}^{\ast}B_k\Phi \right)\\
&=&C_k\,S_k\,C_{k'}\,S_{k'}\left[ \left( \Phi,\,\Phi \right)
\phantom{ \frac{\,U_{k,\,p}^{\,2}\left( C_k^2\,S_p^2+C_p^2\,S_k^2 \right)^2\,}{\left( E_k+E_p\right)^2} } \right.\\
& &\qquad\quad-\left. 2\sum_{p\in\Lambda} \left\{ \frac{\,U_{k,\,p}^{\,2}
\left( C_k^2\,S_p^2+C_p^2\,S_k^2 \right)^2\,}{\left( E_k+E_p\right)^2}
+\frac{\,U_{k',\,p}^{\,2}
\left( C_{k'}^2\,S_p^2+C_p^2\,S_{k'}^2 \right)^2\,}{\left( E_{k'}+E_p\right)^2}
\right\} \right] \\
& &+4C_k\,S_k\,C_{k'}\,S_{k'}\frac{\,U_{k,\,k'}^{\,2}
\left( C_k^2\,S_{k'}^2+C_{k'}^2\,S_k^2 \right)^2\,}{\left( E_k+E_{k'}\right)^2}
\\
& &+\left( C_k^2\,C_{k'}^2+S_k^2\,S_{k'}^2 \right)\sum_{p\in\Lambda}
\frac{\, U_{k,\,p}U_{k',\,p}\left( C_k^2\,S_p^2+C_p^2\,S_k^2 \right)
\left( C_{k'}^2\,S_p^2+C_p^2\,S_{k'}^2 \right)\,}{
\left( E_k+E_p\right)\left( E_{k'}+E_p\right) }\,.
\end{eqnarray*}
\end{lemma}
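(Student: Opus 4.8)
\textbf{Proof proposal for Lemma \ref{lm:bkphi}.}

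The plan is to compute everything by expressing $B_k$ and $B_{k'}^{\ast}$ through the Bogoliubov operators $\gamma_{k\sigma},\gamma_{k\sigma}^{\ast}$ via formula \eqref{eq:bk}, and then pushing these operators through the four-$\gamma$-creation structure of $\Phi$ using only the anticommutation relations of Corollary \ref{crl:properties} (a) together with $\gamma_{k\sigma}\Psi_{BCS}=0$ from Corollary \ref{crl:properties} (b). Concretely, write $\Phi=\sum_{p,p'}c_{p,p'}\,\gamma_{p\uparrow}^{\ast}\gamma_{-p\downarrow}^{\ast}\gamma_{p'\uparrow}^{\ast}\gamma_{-p'\downarrow}^{\ast}\Psi_{BCS}$ with $c_{p,p'}=\tfrac12 U_{p,p'}(C_p^2 S_{p'}^2+C_{p'}^2 S_p^2)/(E_p+E_{p'})$; note $c$ is symmetric in its two indices. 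For part (a), apply each of the three pieces of \eqref{eq:bk} to $\Phi$: the $C_kS_k(1-\gamma_{k\uparrow}^{\ast}\gamma_{k\uparrow}-\gamma_{-k\downarrow}^{\ast}\gamma_{-k\downarrow})$ piece produces $C_kS_k\Phi$ minus the ``occupation'' contributions (when the number operator lands on a mode matching $k$, which happens exactly when one of $p,p'$ equals $k$); the $-C_k^2\gamma_{k\uparrow}\gamma_{-k\downarrow}$ piece annihilates a $k$-pair from $\Phi$, leaving a two-$\gamma^{\ast}$ term with the sum collapsed to one index; and the $-S_k^2\gamma_{k\uparrow}^{\ast}\gamma_{-k\downarrow}^{\ast}$ piece prepends another creation pair — but by Pauli exclusion (two $\gamma_{k\uparrow}^{\ast}$ or two $\gamma_{-k\downarrow}^{\ast}$ acting on anything give zero) this term survives only partially, in fact it recombines with part of the number-operator term; tracking the signs from anticommuting past the existing four factors, these assemble into the stated expression. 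I would carry this out one piece at a time and simplify.

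For part (b) I would pair $\Phi$ against the expression for $B_k\Phi$ just obtained. The term $(C_kS_k-S_k^2\gamma_{k\uparrow}^{\ast}\gamma_{-k\downarrow}^{\ast})\Phi$ contributes $C_kS_k(\Phi,\Phi)$ plus $-S_k^2(\Phi,\gamma_{k\uparrow}^{\ast}\gamma_{-k\downarrow}^{\ast}\Phi)$; the latter inner product, by Corollary \ref{crl:properties} (a)--(b), equals a sum over $p$ of $|c_{k,p}|^2$-type quantities (the six-$\gamma^{\ast}$ overlaps vanish, leaving only the ``diagonal'' contraction where the inserted $k$-pair matches one already present). The $C_k^2\sum_p(\cdots)\gamma_{p\uparrow}^{\ast}\gamma_{-p\downarrow}^{\ast}\Psi_{BCS}$ term is orthogonal to $\Phi$ (two $\gamma^{\ast}$ vs.\ four). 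The last, $-2C_kS_k\sum_p(\cdots)\gamma_{k\uparrow}^{\ast}\gamma_{-k\downarrow}^{\ast}\gamma_{p\uparrow}^{\ast}\gamma_{-p\downarrow}^{\ast}\Psi_{BCS}$ term again overlaps $\Phi$ diagonally, contributing another $C_kS_k\sum_p$ of the same squared coefficients. Collecting the coefficient of $C_kS_k$ and matching $c_{k,p}=\tfrac12 U_{k,p}(C_k^2S_p^2+C_p^2S_k^2)/(E_k+E_p)$ reproduces $\left(\Phi,\Phi\right)-2\sum_p U_{k,p}^{2}(C_k^2S_p^2+C_p^2S_k^2)^2/(E_k+E_p)^2$ after a bookkeeping check of the factor of $2$ versus the two places (from the number-operator term and from the double-pair term) where these squares appear.

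Part (c) is the same computation carried one step further: apply $B_{k'}^{\ast}$ (the adjoint of \eqref{eq:bk}, i.e.\ $C_{k'}S_{k'}(1-\gamma_{k'\uparrow}^{\ast}\gamma_{k'\uparrow}-\gamma_{-k'\downarrow}^{\ast}\gamma_{-k'\downarrow})-S_{k'}^2\gamma_{k'\uparrow}\gamma_{-k'\downarrow}-C_{k'}^2\gamma_{k'\uparrow}^{\ast}\gamma_{-k'\downarrow}^{\ast}$) to the three-term expansion of $B_k\Phi$ from (a), then take the inner product with $\Phi$; only terms whose $\gamma^{\ast}$-count matches four survive. This generates the main $C_kS_kC_{k'}S_{k'}(\Phi,\Phi)$ term, the subtracted sums over $p$ indexed separately by the $k$-contraction and the $k'$-contraction, the extra ``coincidence'' term when $p=k'$ in one sum and $p=k$ in the other (giving the isolated $U_{k,k'}^2$ term with a $+4$ coefficient), and finally the genuinely new cross term $(C_k^2 C_{k'}^2+S_k^2 S_{k'}^2)\sum_p U_{k,p}U_{k',p}(\cdots)(\cdots)/[(E_k+E_p)(E_{k'}+E_p)]$, which arises when the $C_k^2$-branch of $B_k\Phi$ (a two-$\gamma^{\ast}$ state proportional to $\gamma_{p\uparrow}^{\ast}\gamma_{-p\downarrow}^{\ast}\Psi_{BCS}$) is hit by the $C_{k'}^2\gamma_{k'\uparrow}^{\ast}\gamma_{-k'\downarrow}^{\ast}$-branch of $B_{k'}^{\ast}$, plus the mirror contribution from the $S_k^2$/$S_{k'}^2$ branches; the coefficient $C_k^2 C_{k'}^2+S_k^2 S_{k'}^2$ is exactly what these two matching possibilities sum to. The main obstacle throughout is purely combinatorial: keeping the anticommutation signs straight when moving a $\gamma^{\ast}$ or $\gamma$ past a block of four $\gamma^{\ast}$'s, and correctly enumerating which index coincidences ($p=k$, $p'=k$, $p=k'$, etc.) produce non-vanishing contractions — the double-counting that yields the factors $2$ and $4$ is where an error is most likely, so I would verify those by a small special case (e.g.\ $|\Lambda|=2$).
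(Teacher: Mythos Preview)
Your approach is exactly the paper's: use \eqref{eq:bk} to write $B_k$ in Bogoliubov operators, compute the action of each piece on $\Phi$ via Corollary \ref{crl:properties} (a), (b), and then take inner products. The paper in fact records the two intermediate identities
\[
(\gamma_{k\uparrow}^{\ast}\gamma_{k\uparrow}+\gamma_{-k\downarrow}^{\ast}\gamma_{-k\downarrow})\Phi
=2\sum_{p}\frac{U_{k,p}(C_k^2S_p^2+C_p^2S_k^2)}{E_k+E_p}\,
\gamma_{k\uparrow}^{\ast}\gamma_{-k\downarrow}^{\ast}\gamma_{p\uparrow}^{\ast}\gamma_{-p\downarrow}^{\ast}\Psi_{BCS},
\qquad
\gamma_{k\uparrow}\gamma_{-k\downarrow}\Phi
=-\sum_{p}\frac{U_{k,p}(C_k^2S_p^2+C_p^2S_k^2)}{E_k+E_p}\,
\gamma_{p\uparrow}^{\ast}\gamma_{-p\downarrow}^{\ast}\Psi_{BCS},
\]
from which (a) follows at once; (b) and (c) are then declared immediate.

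One correction to your bookkeeping. In your discussion of (a) and (b) you say the $-S_k^2\gamma_{k\uparrow}^{\ast}\gamma_{-k\downarrow}^{\ast}$ piece ``survives only partially'' and that $(\Phi,\gamma_{k\uparrow}^{\ast}\gamma_{-k\downarrow}^{\ast}\Phi)$ reduces to a diagonal sum of $|c_{k,p}|^2$ terms. That is not right: $\gamma_{k\uparrow}^{\ast}\gamma_{-k\downarrow}^{\ast}\Phi$ is a sum of six-$\gamma^{\ast}$ vectors (the terms of $\Phi$ with a $k$-pair already present are killed outright by $(\gamma_{k\uparrow}^{\ast})^2=0$, not converted into four-$\gamma^{\ast}$ vectors), so it is \emph{orthogonal} to the four-$\gamma^{\ast}$ vector $\Phi$ and contributes nothing to $(\Phi,B_k\Phi)$. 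In (a) this piece simply stays as $-S_k^2\gamma_{k\uparrow}^{\ast}\gamma_{-k\downarrow}^{\ast}\Phi$ with no recombination; the entire $-2C_kS_k\sum_p(\cdots)$ correction in (b) comes from the number-operator term alone. Once you fix this, the coefficients in (b) and (c) fall out exactly as stated without any further cancellation to track.
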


\begin{proof} (a)\quad A straightforward calculation based on Corollary \ref{crl:properties} (a) gives
\begin{eqnarray*}
\hspace{-4mm}\left( \gamma_{k\uparrow}^{\ast}\gamma_{k\uparrow}
+\gamma_{-k\downarrow}^{\ast}\gamma_{-k\downarrow} \right)\Phi
&=&2\sum_{p\in\Lambda} \frac{\,U_{k,\,p}
\left( C_k^2\,S_p^2+C_p^2\,S_k^2 \right)\,}{E_k+E_p}\,
\gamma_{k\uparrow}^{\ast}\,\gamma_{-k\downarrow}^{\ast}
\gamma_{p\uparrow}^{\ast}\,\gamma_{-p\downarrow}^{\ast}\Psi_{BCS}\,,\\
\gamma_{k\uparrow}\gamma_{-k\downarrow}\Phi
&=&-\sum_{p\in\Lambda} \frac{\,U_{k,\,p}
\left( C_k^2\,S_p^2+C_p^2\,S_k^2 \right)\,}{E_k+E_p}\,
\gamma_{p\uparrow}^{\ast}\,\gamma_{-p\downarrow}^{\ast}\Psi_{BCS}\,.
\end{eqnarray*}
Part (a) thus follows from \eqref{eq:bk}. Parts (b) and (c) follow immediately from (a) and Corollary \ref{crl:properties} (a), (b).
\end{proof}

\begin{lemma}\label{lm:trianglee} Let $H'$ be as in \eqref{eq:hprime}. Then
\[
\left( \Phi,\,H'\Phi \right)=\left\{\, 1+\left( \Phi,\,\Phi \right)\,\right\}
\triangle E,
\]
where
\begin{eqnarray*}
\triangle E&=&\sum_{k,\,k'\in\Lambda} U_{k,\,k'}\,
\frac{\, C_k^2\,C_{k'}^2+S_k^2\,S_{k'}^2\,}{1+\left( \Phi,\,\Phi \right)}\,
\times \\
& &\times \sum_{p\in\Lambda}
\frac{\, U_{k,\,p}U_{k',\,p}\left( C_k^2\,S_p^2+C_p^2\,S_k^2 \right)
\left( C_{k'}^2\,S_p^2+C_p^2\,S_{k'}^2 \right)\,}{
\left( E_k+E_p\right)\left( E_{k'}+E_p\right) } \\
& &+4\sum_{k,\,k'\in\Lambda} U_{k,\,k'}\,
\frac{\, C_k\,S_k\,C_{k'}\,S_{k'}\,}{1+\left( \Phi,\,\Phi \right)}\,
\frac{\, U_{k,\,k'}^{\,2}
\left( C_k^2\,S_{k'}^2+C_{k'}^2\,S_k^2 \right)^2\,}
{\left( E_k+E_{k'}\right)^2}\,.
\end{eqnarray*}
\end{lemma}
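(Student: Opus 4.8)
The plan is to compute $\left( \Phi,\,H'\Phi \right)$ directly by expanding $H'$ according to \eqref{eq:hprime} and substituting the three building blocks already assembled in Lemma \ref{lm:bkphi}. Writing out \eqref{eq:hprime}, we have
\[
\left( \Phi,\,H'\Phi \right)=\sum_{k,\,k'\in\Lambda}U_{k,\,k'}\left\{ \left( \Phi,\,B_{k'}^{\ast}B_k\Phi \right)-C_{k'}\,S_{k'}\left[ \left( \Phi,\,B_k^{\ast}\Phi \right)+\left( \Phi,\,B_k\Phi \right)\right]+C_k\,S_k\,C_{k'}\,S_{k'}\left( \Phi,\,\Phi \right)\right\}.
\]
Here $\left( \Phi,\,B_k^{\ast}\Phi \right)=\overline{\left( \Phi,\,B_k\Phi \right)}$, and since everything in sight is real, $\left( \Phi,\,B_k^{\ast}\Phi \right)=\left( \Phi,\,B_k\Phi \right)$; so the three middle terms contribute $-2C_{k'}\,S_{k'}\left( \Phi,\,B_k\Phi \right)$ per $(k,k')$. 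First I would substitute Lemma \ref{lm:bkphi}(b) and (c) into the above.

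Next, I would collect the coefficient of $\left( \Phi,\,\Phi \right)$. From Lemma \ref{lm:bkphi}(c) the leading term is $C_k\,S_k\,C_{k'}\,S_{k'}\left( \Phi,\,\Phi \right)$, from Lemma \ref{lm:bkphi}(b) (times $-2C_{k'}\,S_{k'}$) it is $-2C_k\,S_k\,C_{k'}\,S_{k'}\left( \Phi,\,\Phi \right)$, and the last term of $H'$ gives $+C_k\,S_k\,C_{k'}\,S_{k'}\left( \Phi,\,\Phi \right)$; these sum to zero. The point of the somewhat unusual normalization in the definitions of $\Phi$ and $\triangle E$ is exactly this cancellation. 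What remains after the $\left( \Phi,\,\Phi \right)$-terms drop out is, up to the overall $1/\{1+(\Phi,\Phi)\}$ that I would factor, a sum of three kinds of terms: the "$p$-sum, diagonal pair" pieces coming from $-2C_{k'}S_{k'}$ times the $p$-sum in (b); the "$p$-sum, mixed pair" pieces from the $U_{k,p}U_{k',p}$ sum and the $U_{k,k'}^2$ square appearing in the last two lines of (c); and the $-2\{\cdots\}$ bracket in (c) times $-2C_{k'}S_{k'}$ — wait, that bracket is inside (c) already, multiplied by $C_k S_k C_{k'} S_{k'}$. I would carefully match these against the two groups of terms defining $\triangle E$: the $U_{k,k'}(C_k^2 C_{k'}^2 + S_k^2 S_{k'}^2)$-weighted $p$-sum, and the $4 U_{k,k'} C_k S_k C_{k'} S_{k'} U_{k,k'}^2(\cdots)^2/(E_k+E_{k'})^2$ term.

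The main obstacle will be the bookkeeping: several of the surviving terms are individually nonzero but cancel in pairs after one relabels summation indices $k\leftrightarrow k'$ (and uses the stated symmetries $U_{k',k}=U_{k,k'}$, $U_{-k,-k'}=U_{k,k'}$, together with $\theta_{-k}=\theta_k$, i.e. $C_{-k}=C_k$, $S_{-k}=S_k$, $E_{-k}=E_k$). Concretely, the term from $-2C_{k'}S_{k'}$ times the $p$-sum in (b) is $+4 U_{k,k'} C_k S_k C_{k'} S_{k'}\sum_p U_{k,p}^2(C_k^2 S_p^2 + C_p^2 S_k^2)^2/(E_k+E_p)^2$, while the matching half of the $-2\{\cdots\}$ bracket in (c) (the $U_{k,p}^2$ half) multiplied through by $U_{k,k'}C_k S_k C_{k'}S_{k'}$ gives $-2 U_{k,k'} C_k S_k C_{k'} S_{k'}\sum_p U_{k,p}^2(\cdots)^2/(E_k+E_p)^2$, and the $U_{k',p}^2$ half gives the same after $k\leftrightarrow k'$; summing over all $k,k'$ these combine to leave a single contribution that, together with the isolated $4 U_{k,k'}C_k S_k C_{k'} S_{k'} U_{k,k'}^2(\cdots)^2/(E_k+E_{k'})^2$ term from (c), reproduces the second group in $\triangle E$. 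The remaining $U_{k,p}U_{k',p}$-sum in (c), weighted by $U_{k,k'}(C_k^2 C_{k'}^2 + S_k^2 S_{k'}^2)$, is exactly the first group of $\triangle E$. Once every term is accounted for, dividing by $1+(\Phi,\Phi)$ — or rather, recognizing that the whole expression equals $\{1+(\Phi,\Phi)\}\triangle E$ with $\triangle E$ as defined — finishes the proof. I would present the computation as a single aligned chain with the $(\Phi,\Phi)$-cancellation highlighted first, then the index-swap cancellation, then the identification of the two surviving groups.
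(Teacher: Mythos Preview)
Your approach is exactly the paper's: expand $(\Phi,H'\Phi)$ via \eqref{eq:hprime} and substitute Lemma~\ref{lm:bkphi}(b),(c); the paper in fact stops there and declares the result, leaving the cancellations you describe to the reader. One small correction to your bookkeeping: the $+4$ contribution from $-2C_{k'}S_{k'}$ times the $p$-sum in (b) and the two $-2$ contributions from the bracket in (c) (after the $k\leftrightarrow k'$ swap) cancel \emph{exactly}, so the second group of $\triangle E$ is precisely the isolated $4U_{k,k'}C_kS_kC_{k'}S_{k'}\,U_{k,k'}^{2}(\cdots)^2/(E_k+E_{k'})^2$ term from (c), not that term plus a residual.
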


\begin{proof}\quad By \eqref{eq:hprime},
\begin{eqnarray*}
\left( \Phi,\,H'\Phi \right)&=&\sum_{k,\,k'\in\Lambda} U_{k,\,k'} \left[\,
\left( \Phi,\, B_{k'}^{\ast}\,B_k \Phi \right)
-C_{k'}\,S_{k'}\left( \Phi,\,\left\{ B_k^{\ast}+B_k \right\}\Phi \right)
\right.\\
& &\quad +\left.C_k\,S_k\,C_{k'}\,S_{k'}\left( \Phi,\,\Phi \right) \,\right].
\end{eqnarray*}
The result thus follows from Lemma \ref{lm:bkphi} (b), (c).
\end{proof}

Note that $\triangle E<0$. We now show that the state $\Psi$ above has a lower energy than the BCS state $\Psi_{BCS}$ .

\begin{theorem} Let $\triangle E$ be as in Lemma \ref{lm:trianglee}. Then
the state $\Psi$ has a lower energy than the BCS state $\Psi_{BCS}$, and hence than the Fermi vacuum $\Psi_F$ ,i.e.,
\[
\left( \Psi,\, H\Psi \right)-\left( \Psi_{BCS},\, H\Psi_{BCS}\right)=\triangle E<0.
\]
\end{theorem}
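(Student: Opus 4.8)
The plan is to decompose $H=H_M+H'$ and compute $(\Psi,H\Psi)$ by splitting off the $H_M$ and $H'$ contributions separately, using that $\Psi=(\Psi_{BCS}+\Phi)/\sqrt{1+(\Phi,\Phi)}$ and that $(\Psi_{BCS},\Phi)=0$ by Lemma \ref{lm:ebcs}(a). For the $H_M$ part, Lemma \ref{lm:ebcs}(c) already gives
\[
\left( \Psi,\,H_M\Psi \right)=E_{BCS}+\frac{1}{\,1+\left( \Phi,\,\Phi \right)\,}
\sum_{p,\,p'\in\Lambda} \frac{\,U_{p,\,p'}^{\,2}
\left( C_p^2\,S_{p'}^2+C_{p'}^2\,S_p^2\right)^2\,}{E_p+E_{p'}},
\]
so the remaining task is to evaluate $(\Psi,H'\Psi)$. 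Expanding,
\[
\left( \Psi,\,H'\Psi \right)=\frac{1}{\,1+\left( \Phi,\,\Phi \right)\,}\left\{ \left( \Psi_{BCS},\,H'\Psi_{BCS} \right)+2\,\mathrm{Re}\left( \Phi,\,H'\Psi_{BCS} \right)+\left( \Phi,\,H'\Phi \right)\right\}.
\]

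Next I would substitute the three ingredients already established: $(\Psi_{BCS},H'\Psi_{BCS})=0$ from Lemma \ref{lm:hprimeebcs}(b); $(\Phi,H'\Psi_{BCS})=-\tfrac12\sum_{p,p'}U_{p,p'}^{\,2}(C_p^2S_{p'}^2+C_{p'}^2S_p^2)^2/(E_p+E_{p'})$ from Lemma \ref{lm:hprimeebcs}(c) (real, so $\mathrm{Re}$ is harmless); and $(\Phi,H'\Phi)=\{1+(\Phi,\Phi)\}\triangle E$ from Lemma \ref{lm:trianglee}. Then
\[
\left( \Psi,\,H'\Psi \right)=\frac{1}{\,1+\left( \Phi,\,\Phi \right)\,}\left\{ -\sum_{p,\,p'\in\Lambda}\frac{\,U_{p,\,p'}^{\,2}\left( C_p^2\,S_{p'}^2+C_{p'}^2\,S_p^2\right)^2\,}{E_p+E_{p'}}\right\}+\triangle E.
\]
Adding $(\Psi,H_M\Psi)$ and $(\Psi,H'\Psi)$, the two single sums over $p,p'$ cancel exactly, leaving $(\Psi,H\Psi)=E_{BCS}+\triangle E$. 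Since the lemma preceding the theorem identifies $(\Psi_{BCS},H\Psi_{BCS})=E_{BCS}$, subtracting gives $(\Psi,H\Psi)-(\Psi_{BCS},H\Psi_{BCS})=\triangle E$, and the remark that $\triangle E<0$ finishes the statement.

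\textbf{Main obstacle.} The conceptually delicate point is not any of these final manipulations — they are just bookkeeping once the cited lemmas are in hand — but rather the sign claim $\triangle E<0$, which the text asserts in the sentence ``Note that $\triangle E<0$'' without proof in this excerpt. Establishing it requires examining the two sums defining $\triangle E$: in each, one has an outer factor $U_{k,k'}\le 0$ multiplying a factor that is manifestly nonnegative (either $C_k^2C_{k'}^2+S_k^2S_{k'}^2\ge 0$ times a product $U_{k,p}U_{k',p}(\cdots)(\cdots)$ whose sign over the $p$-sum one must control, or $C_kS_kC_{k'}S_{k'}$ times $U_{k,k'}^2(\cdots)^2/(E_k+E_{k'})^2\ge 0$ with $C_k,S_k\ge 0$ since $0\le\theta_k\le\pi/2$). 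The second sum is clearly $\le 0$ termwise. For the first sum the subtle part is that $U_{k,p}U_{k',p}$ need not have a fixed sign as $p$ varies, so one would want to recognize the whole double sum as (a negative multiple of) a sum of squares — e.g. writing it as $-\sum_p$ of the square of $\sum_k$ of something — which is exactly the structure one should expect from a second-order perturbation-theory energy shift, and which I would verify by rearranging the order of summation. So the hard part is the positive-definiteness/sum-of-squares rewriting behind $\triangle E<0$; everything else is a direct substitution of Lemmas \ref{lm:ebcs}, \ref{lm:hprimeebcs}, and \ref{lm:trianglee}.
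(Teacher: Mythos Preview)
Your argument is correct and is exactly the paper's approach: decompose $H=H_M+H'$ and combine Lemma~\ref{lm:ebcs}(c), Lemma~\ref{lm:hprimeebcs}(b),(c), and Lemma~\ref{lm:trianglee} so that the $\sum_{p,p'}U_{p,p'}^{2}(\cdots)^2/(E_p+E_{p'})$ terms cancel, leaving $(\Psi,H\Psi)=E_{BCS}+\triangle E$. Your observation about the sign of $\triangle E$ is apt---the paper, like your sketch, simply asserts $\triangle E<0$ rather than proving it.
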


\begin{proof}\quad Since $\displaystyle{\left( \Psi,\, H\Psi \right)=\left( \Psi,\, H_M\Psi \right)+\left( \Psi,\, H'\Psi \right)}$, \  the result follows from Lemma \ref{lm:ebcs} (c), Lemma \ref{lm:hprimeebcs} (b), (c) and Lemma
\ref{lm:trianglee}.
\end{proof}

\section{A new gap equation}

In section 3 we use the BCS state $\Psi_{BCS}$ to deal with the expectation
values of the operators $C_{-k\downarrow}C_{k\uparrow}$ and
$C_{k\uparrow}^{\ast}C_{-k\downarrow}^{\ast}$ (see \eqref{eq:vev}). But we originally need to use the ground state of the BCS Hamiltonian to deal
with the expectation values of such operators. The ground state of the BCS Hamiltonian is studied by several authors. See Mattis and Lieb \cite{mattislieb}, Richardson \cite{richardson} and von Delft \cite{delft} for example. They assumed that $U_{k,\,k'}$ (see \eqref{eq:hamiltonian}) is a negative constant if $k$ and $k'$ both belong to the neighborhood of the Fermi surface, and 0 otherwise. So little is known about the ground state when $U_{k,\,k'}$ does \textit{not} satisfy the assumption just above. We therefore try to use our superconducting state $\Psi$ in the preceding section instead (see \eqref{eq:tildevev} below). This is because our state $\Psi$ has a lower energy than the BCS state $\Psi_{BCS}$.

To this end we have to begin with the following new gap equation. Let
$\widetilde{\Delta}_k$ be a function of $k\in\Lambda$. We assume the existence
of the following $\widetilde{\Delta}_k$ : \quad $\widetilde{\Delta}_k$
satisfies $\widetilde{\Delta}_k \geq 0$ and $\widetilde{\Delta}_{-k}=\widetilde{\Delta}_k$, and is also a solution to the new gap equation
\begin{equation}\label{eq:tildegapequation}
\widetilde{\Delta}_k=-\frac{1}{\,2\,} \sum_{k'\in\Lambda}U_{k,\,k'}\,
\frac{\widetilde{\Delta}_{k'}}
{\,\sqrt{\,\xi_{k'}^2+\widetilde{\Delta}_{k'}^2\,}\,}
\left( 1-\frac{4D_{k'}}{\,D+2\,}\right)\,,
\end{equation}
where
\[
D_{k'}=\frac{1}{\,4\,} \sum_{p\in\Lambda} \frac{\,U_{k',\,p}^{\,2}\,}
{\,\left( \sqrt{\,\xi_{k'}^2+\widetilde{\Delta}_{k'}^2\,}+
\sqrt{\,\xi_p^2+\widetilde{\Delta}_p^2\,} \right)^2\,}
\left( 1-\frac{ \xi_{k'}\,\xi_p }{\,
\sqrt{\,\xi_{k'}^2+\widetilde{\Delta}_{k'}^2\,}
\sqrt{\,\xi_p^2+\widetilde{\Delta}_p^2\,} \,} \right)^2,
\]
$\displaystyle{ D=\sum_{k'\in\Lambda} D_{k'} }$ .

\begin{remark}\label{rmk:numerical}
A numerical calculation gives $4D_{k'}/(D+2)\leq O(10^{-17})$ in the case of aluminum. So it is expected that $\widetilde{\Delta}_k$ is nearly equal to
$\Delta_k$ and that $\widetilde{\Delta}_k \geq 0$.
\end{remark}

Let $\widetilde{\theta}_k$ be a function of $k\in\Lambda$ and let it satisfy
\begin{equation}\label{eq:tildetheta}
\sin 2\widetilde{\theta}_k=\frac{\widetilde{\Delta}_k}{\,\sqrt{\,\xi_k^2+
\widetilde{\Delta}_k^2\,}\,}\,,\qquad
\cos 2\widetilde{\theta}_k=\frac{\xi_k}{\,\sqrt{\,\xi_k^2+
\widetilde{\Delta}_k^2\,}\,}
\end{equation}
with $0 \leq \widetilde{\theta}_k \leq \pi/2$. \  Note that
$\widetilde{\theta}_{-k}=\widetilde{\theta}_k$ . We denote by
$\widetilde{G}_B$ the following bounded, selfadjoint operator on
$\mathcal{H}=\mathbb{C}^{2^{2M}}$:
\[
\widetilde{G}_B=i\sum_{k\in\Lambda} \widetilde{\theta}_k
\left( C_{-k\downarrow}C_{k\uparrow}-
C_{k\uparrow}^{\ast}C_{-k\downarrow}^{\ast} \right).
\]
Here, $\widetilde{\theta}_k$ is given by \eqref{eq:tildetheta}. We set
\[
\widetilde{\Psi}_{BCS}=e^{i\widetilde{G}_B}|0\rangle
\in\mathcal{H}=\mathbb{C}^{2^{2M}}.
\]
We now give another expression for the state $\widetilde{\Psi}_{BCS}$. A proof
similar to that of Lemma \ref{lm:bcsstate} yields the following.

\begin{lemma}\label{lm:tildebcsstate} \quad $\displaystyle{
\widetilde{\Psi}_{BCS}=\left\{ \,\prod_{k\in\Lambda}\left(
\cos\widetilde{\theta}_k+\sin\widetilde{\theta}_k\,
C_{k\uparrow}^{\ast}C_{-k\downarrow}^{\ast} \right) \,\right\}|0\rangle }$.
\end{lemma}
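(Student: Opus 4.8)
The plan is to mirror exactly the argument used for Lemma~\ref{lm:bcsstate}, since the only difference between $\Psi_{BCS}$ and $\widetilde{\Psi}_{BCS}$ is that the angles $\theta_k$ are replaced by $\widetilde{\theta}_k$, and nowhere in the proof of Lemma~\ref{lm:bcsstate} was the defining relation \eqref{eq:theta} (or the gap equation \eqref{eq:gapequation}) actually used. First I would record that the operators $C_{-k\downarrow}C_{k\uparrow}-C_{k\uparrow}^{\ast}C_{-k\downarrow}^{\ast}$ for distinct $k$ commute — this is the same commutator identity already invoked in the proof of Lemma~\ref{lm:bcsstate}, and it holds for purely algebraic reasons (Lemma~\ref{lm:cacr}), independent of the coefficients. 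Consequently the exponential $e^{i\widetilde{G}_B}=\exp\bigl[\sum_k(-\widetilde{\theta}_k)(C_{-k\downarrow}C_{k\uparrow}-C_{k\uparrow}^{\ast}C_{-k\downarrow}^{\ast})\bigr]$ factorizes as $\prod_{k\in\Lambda}\exp\bigl[-\widetilde{\theta}_k(C_{-k\downarrow}C_{k\uparrow}-C_{k\uparrow}^{\ast}C_{-k\downarrow}^{\ast})\bigr]$.

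Next I would reproduce the per-mode power computation: using Lemmas~\ref{lm:blo} and~\ref{lm:cacr} one checks by induction that, acting on $|0\rangle$, the odd powers $\bigl[-\widetilde{\theta}_k(C_{-k\downarrow}C_{k\uparrow}-C_{k\uparrow}^{\ast}C_{-k\downarrow}^{\ast})\bigr]^{2n-1}$ equal $(-1)^{n-1}\widetilde{\theta}_k^{\,2n-1}C_{k\uparrow}^{\ast}C_{-k\downarrow}^{\ast}|0\rangle$ and the even powers equal $(-1)^n\widetilde{\theta}_k^{\,2n}|0\rangle$, exactly as in Lemma~\ref{lm:bcsstate} with $\theta_k$ replaced by $\widetilde{\theta}_k$. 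Summing the series then gives $\exp\bigl[-\widetilde{\theta}_k(C_{-k\downarrow}C_{k\uparrow}-C_{k\uparrow}^{\ast}C_{-k\downarrow}^{\ast})\bigr]|0\rangle=(\cos\widetilde{\theta}_k+\sin\widetilde{\theta}_k\,C_{k\uparrow}^{\ast}C_{-k\downarrow}^{\ast})|0\rangle$. Applying the factorization from the previous step and noting that the remaining factors $\exp\bigl[-\widetilde{\theta}_{k'}(\cdots)\bigr]$ for $k'\neq k$ commute past $C_{k\uparrow}^{\ast}C_{-k\downarrow}^{\ast}$ (again Lemma~\ref{lm:cacr}), one obtains the claimed product formula.

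Since the paper explicitly says "A proof similar to that of Lemma~\ref{lm:bcsstate}", the natural presentation is a one- or two-sentence proof that points to Lemma~\ref{lm:bcsstate} and flags that $\theta_k$ is to be replaced throughout by $\widetilde{\theta}_k$, observing that the proof of Lemma~\ref{lm:bcsstate} never used the specific value of $\theta_k$. There is essentially no obstacle here: the result is a direct transcription. If anything, the only point deserving a word of care is confirming that the commutativity of the distinct-mode generators, and hence the factorization of $e^{i\widetilde{G}_B}$, does not depend on any relation among the $\widetilde{\theta}_k$ — it does not, being a consequence of the canonical anticommutation relations alone — so the entire chain of equalities goes through verbatim.
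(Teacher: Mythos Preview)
Your proposal is correct and matches the paper's approach exactly: the paper gives no separate proof for this lemma but simply states that a proof similar to that of Lemma~\ref{lm:bcsstate} applies, and your write-up is precisely that proof transcribed with $\widetilde{\theta}_k$ in place of $\theta_k$. Your observation that the argument of Lemma~\ref{lm:bcsstate} nowhere uses the specific definition \eqref{eq:theta} of $\theta_k$ is exactly the point that makes the transcription legitimate.
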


An immediate implication of this lemma is the following.

\begin{corollary}\label{crl:tildevevbk}
\[
\left( \widetilde{\Psi}_{BCS},\,C_{-k\downarrow}C_{k\uparrow}\widetilde{\Psi}_{BCS}\right)
=\left( \widetilde{\Psi}_{BCS},\,C_{k\uparrow}^{\ast}C_{-k\downarrow}^{\ast}\widetilde{\Psi}_{BCS}\right)
=\frac{1}{\,2\,}\sin 2\widetilde{\theta}_k\,.
\]
\end{corollary}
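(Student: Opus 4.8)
The plan is to mimic exactly the proof of Corollary \ref{crl:bcsexpectationvalue}(a), now with $\widetilde{\theta}_k$ in place of $\theta_k$. By Lemma \ref{lm:tildebcsstate} we have the explicit product form $\widetilde{\Psi}_{BCS}=\bigl\{\prod_{k\in\Lambda}(\cos\widetilde{\theta}_k+\sin\widetilde{\theta}_k\,C_{k\uparrow}^{\ast}C_{-k\downarrow}^{\ast})\bigr\}|0\rangle$, so the whole computation is a finite-dimensional bookkeeping exercise built on Lemmas \ref{lm:blo} and \ref{lm:cacr}.

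First I would fix $k\in\Lambda$ and separate out the two tensor factors (the $k$-pair and the $-k$-pair) from all the other $k'\neq\pm k$ factors; the canonical anticommutation relations of Lemma \ref{lm:cacr} guarantee that the operators $C_{-k\downarrow}C_{k\uparrow}$ and $C_{k\uparrow}^{\ast}C_{-k\downarrow}^{\ast}$ act only on the $k$-pair factor, and since each remaining factor $(\cos\widetilde{\theta}_{k'}+\sin\widetilde{\theta}_{k'}\,C_{k'\uparrow}^{\ast}C_{-k'\downarrow}^{\ast})|0\rangle$ is a unit vector, those factors contribute $1$ to the inner product. Then, on the two-dimensional space spanned by $|0\rangle$ and $C_{k\uparrow}^{\ast}C_{-k\downarrow}^{\ast}|0\rangle$, I would compute directly: $C_{-k\downarrow}C_{k\uparrow}$ sends $C_{k\uparrow}^{\ast}C_{-k\downarrow}^{\ast}|0\rangle$ to $|0\rangle$ and kills $|0\rangle$, so $C_{-k\downarrow}C_{k\uparrow}(\cos\widetilde{\theta}_k+\sin\widetilde{\theta}_k\,C_{k\uparrow}^{\ast}C_{-k\downarrow}^{\ast})|0\rangle=\sin\widetilde{\theta}_k|0\rangle$, and pairing this against $(\cos\widetilde{\theta}_k+\sin\widetilde{\theta}_k\,C_{k\uparrow}^{\ast}C_{-k\downarrow}^{\ast})|0\rangle$ gives $\cos\widetilde{\theta}_k\sin\widetilde{\theta}_k=\tfrac12\sin2\widetilde{\theta}_k$. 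The computation for $C_{k\uparrow}^{\ast}C_{-k\downarrow}^{\ast}$ is the adjoint statement and gives the same value, either by a symmetric direct calculation or by noting that the expectation value is real and taking adjoints.

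Since Lemma \ref{lm:tildebcsstate} is explicitly asserted to follow ``by a proof similar to that of Lemma \ref{lm:bcsstate},'' and the present statement is word-for-word Corollary \ref{crl:bcsexpectationvalue}(a) with tildes added, there is no real obstacle: the only thing to be careful about is the sign bookkeeping in the $(-1)^{\sharp}$ factors of Definition (the creation/annihilation operators for the $k$-pair versus the $-k$-pair), but these signs cancel in the product $C_{-k\downarrow}C_{k\uparrow}$ acting on $C_{k\uparrow}^{\ast}C_{-k\downarrow}^{\ast}|0\rangle$ exactly as in Lemma \ref{lm:bcsstate}, so the end result is clean. I would therefore simply write: ``Combining Lemma \ref{lm:tildebcsstate} with Lemmas \ref{lm:blo} and \ref{lm:cacr} gives the result,'' which is the same one-line proof the paper gives for Corollary \ref{crl:bcsexpectationvalue}(a).
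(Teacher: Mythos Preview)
Your proposal is correct and matches the paper's approach exactly: the paper states this corollary as ``an immediate implication'' of Lemma \ref{lm:tildebcsstate}, which is precisely the one-line argument you arrive at (Lemma \ref{lm:tildebcsstate} combined with Lemmas \ref{lm:blo} and \ref{lm:cacr}, mirroring Corollary \ref{crl:bcsexpectationvalue}(a)). One tiny remark: the operator $C_{-k\downarrow}C_{k\uparrow}$ actually touches only the single $k$-factor in the product (the $-k$-factor involves $C_{-k\uparrow}^{\ast}C_{k\downarrow}^{\ast}$, different modes), so your separation of both the $k$- and $-k$-factors is harmless but unnecessary.
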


We introduce another Bogoliubov transformation of $C_{k\,\sigma}$:
\begin{equation}\label{eq:tildegamma}
\widetilde{\gamma}_{k\,\sigma}=e^{i\widetilde{G}_B}C_{k\,\sigma}
e^{-i\widetilde{G}_B}.
\end{equation}
Note that the operator $\widetilde{\gamma}_{k\,\sigma}$ and its adjoint
operator $\widetilde{\gamma}_{k\,\sigma}^{\ast}$ are both bounded linear
operators on $\mathcal{H}=\mathbb{C}^{2^{2M}}$.

A proof similar to that of Corollary \ref{crl:properties} yields the following.

\begin{corollary}\label{crl:tildeproperties} The operators
$\widetilde{\gamma}_{k\,\sigma}$ and $\widetilde{\gamma}_{k\,\sigma}^{\ast}$
satisfy the following.

\noindent {\rm (a)} \quad $\left\{ \widetilde{\gamma}_{k\sigma},\,
\widetilde{\gamma}_{k'\sigma'}^{\ast} \right\}=
\delta_{k k'}\delta_{\sigma \sigma'},\qquad
\left\{ \widetilde{\gamma}_{k\sigma},\,\widetilde{\gamma}_{k'\sigma'} \right\}
=\left\{ \widetilde{\gamma}_{k\sigma}^{\ast},\,
\widetilde{\gamma}_{k'\sigma'}^{\ast} \right\}=0$.

\noindent {\rm (b)}\quad $\widetilde{\gamma}_{k\,\sigma}\widetilde{\Psi}_{BCS}
=0$\quad for each $k\in\Lambda$ and for each
$\sigma=\uparrow,\,\downarrow$ .

\noindent {\rm (c)}\quad $\displaystyle{
\left\{ \begin{array}{ll} \displaystyle{
\widetilde{\gamma}_{k\uparrow}=\cos\widetilde{\theta}_k\,C_{k\uparrow}
-\sin\widetilde{\theta}_k\, C_{-k\downarrow}^{\ast}\,,}
& \\ \noalign{\vskip0.3cm}
\displaystyle{
\widetilde{\gamma}_{-k\downarrow}=\sin\widetilde{\theta}_k\,
C_{k\uparrow}^{\ast}+\cos\widetilde{\theta}_k\,C_{-k\downarrow}\,.} &
\end{array}
\right.
}$

\noindent {\rm (d)}\quad $\displaystyle{
\left\{ \begin{array}{ll} \displaystyle{
C_{k\uparrow}=\cos\widetilde{\theta}_k\,\widetilde{\gamma}_{k\uparrow}
+\sin\widetilde{\theta}_k\,\widetilde{\gamma}_{-k\downarrow}^{\ast}\,,}
& \\ \noalign{\vskip0.3cm}
\displaystyle{
C_{-k\downarrow}=-\sin\widetilde{\theta}_k\,
\widetilde{\gamma}_{k\uparrow}^{\ast}+\cos\widetilde{\theta}_k\,
\widetilde{\gamma}_{-k\downarrow}\,.} &
\end{array}
\right.
}$
\end{corollary}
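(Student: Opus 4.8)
The plan is to imitate the proof of Corollary~\ref{crl:properties} line by line, replacing $G_B$, $\theta_k$, $\gamma_{k\sigma}$ and $\Psi_{BCS}$ throughout by $\widetilde{G}_B$, $\widetilde{\theta}_k$, $\widetilde{\gamma}_{k\sigma}$ and $\widetilde{\Psi}_{BCS}$. Since $\widetilde{G}_B$ has exactly the same algebraic form as $G_B$, all the commutators that occur have the same shape, and no new phenomenon appears.

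For (a), note that $\widetilde{G}_B$ is bounded and selfadjoint, so $e^{i\widetilde{G}_B}$ is a unitary operator on $\mathcal{H}=\mathbb{C}^{2^{2M}}$ and $\widetilde{\gamma}_{k\sigma}^{\ast}=e^{i\widetilde{G}_B}C_{k\sigma}^{\ast}e^{-i\widetilde{G}_B}$ by \eqref{eq:tildegamma}. Conjugation by a fixed unitary is a $\ast$-automorphism, so it carries the canonical anticommutation relations of Lemma~\ref{lm:cacr} to the corresponding relations for the $\widetilde{\gamma}_{k\sigma}$; this is (a). For (b), Lemma~\ref{lm:blo} gives $C_{k\sigma}|0\rangle=0$, and since $\widetilde{\Psi}_{BCS}=e^{i\widetilde{G}_B}|0\rangle$ by definition, one has $\widetilde{\gamma}_{k\sigma}\widetilde{\Psi}_{BCS}=e^{i\widetilde{G}_B}C_{k\sigma}e^{-i\widetilde{G}_B}e^{i\widetilde{G}_B}|0\rangle=e^{i\widetilde{G}_B}C_{k\sigma}|0\rangle=0$.

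For (c), I would first compute, using Lemma~\ref{lm:cacr}, the single commutators
\[
[\,C_{k\uparrow},\,i\widetilde{G}_B\,]=\widetilde{\theta}_k\,C_{-k\downarrow}^{\ast},\qquad
[\,C_{-k\downarrow},\,i\widetilde{G}_B\,]=-\,\widetilde{\theta}_k\,C_{k\uparrow}^{\ast},
\]
and then, exactly as in the proof of Corollary~\ref{crl:properties}(c), obtain by induction on the number of nested brackets that the $(2n-1)$-fold and $2n$-fold iterated commutators of $C_{k\uparrow}$ with $i\widetilde{G}_B$ equal $(-1)^{n-1}\widetilde{\theta}_k^{\,2n-1}C_{-k\downarrow}^{\ast}$ and $(-1)^{n}\widetilde{\theta}_k^{\,2n}C_{k\uparrow}$ respectively, with the analogous identities for $C_{-k\downarrow}$. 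Applying Lemma~\ref{lm:formula} with $A=C_{k\uparrow}$ (resp.\ $A=C_{-k\downarrow}$), $B=\widetilde{G}_B$ and $\alpha=-1$, and resumming the even and odd parts of the resulting series into $\cos\widetilde{\theta}_k$ and $\sin\widetilde{\theta}_k$, yields (c). Finally, (d) follows by solving the $2\times2$ linear system in (c); its coefficient matrix has determinant $\cos^2\widetilde{\theta}_k+\sin^2\widetilde{\theta}_k=1$, so inversion amounts to the substitution $\widetilde{\theta}_k\mapsto-\widetilde{\theta}_k$ together with interchanging the roles of $C_{k\uparrow}$, $C_{-k\downarrow}$ and their adjoints.

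The only points needing a little attention are the sign bookkeeping in the inductive step for the iterated commutators and the choice $\alpha=-1$, which is what makes Lemma~\ref{lm:formula} produce $e^{i\widetilde{G}_B}C_{k\sigma}e^{-i\widetilde{G}_B}=\widetilde{\gamma}_{k\sigma}$ rather than its inverse $e^{-i\widetilde{G}_B}C_{k\sigma}e^{i\widetilde{G}_B}$. Apart from that, the argument is a verbatim transcription of Corollary~\ref{crl:properties} and its proof, so I do not anticipate any genuine obstacle.
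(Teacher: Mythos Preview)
Your proposal is correct and is exactly the argument the paper has in mind: the paper does not give a separate proof but simply states that ``a proof similar to that of Corollary~\ref{crl:properties} yields the following,'' and what you have written is precisely that transcription with $\theta_k$, $G_B$, $\gamma_{k\sigma}$, $\Psi_{BCS}$ replaced by their tilded counterparts. Your attention to the choice $\alpha=-1$ in Lemma~\ref{lm:formula} and to the sign bookkeeping in the iterated commutators is well placed, and the inversion for part~(d) via the orthogonal $2\times2$ system is the intended step.
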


Set $\widetilde{E}_k=\sqrt{\,\xi_k^2+\widetilde{\Delta}_k^2\,}$, $k\in\Lambda$
and set $B_k=C_{-k\downarrow}C_{k\uparrow}$. We abbreviate
$\sin\widetilde{\theta}_k$ (resp. $\cos\widetilde{\theta}_k$)
to $\widetilde{S}_k$ (resp. to $\widetilde{C}_k$). We now consider the
following vector in $\mathcal{H}=\mathbb{C}^{2^{2M}}$:
\begin{equation}\label{eq:tildepsi}
\widetilde{\Psi}=\frac{\widetilde{\Psi}_{BCS}+\widetilde{\Phi}}
{\,\sqrt{\, 1+\left( \widetilde{\Phi},\,\widetilde{\Phi} \right) \,}\,},
\end{equation}
where \quad $\displaystyle{
\widetilde{\Phi}=\frac{1}{\,2\,}\sum_{p,\,p'\in\Lambda} \frac{\,U_{p,\,p'}
\left( \widetilde{C}_p^2\,\widetilde{S}_{p'}^2+
\widetilde{C}_{p'}^2\,\widetilde{S}_p^2\right)\,}
{\widetilde{E}_p+\widetilde{E}_{p'}}\,
\widetilde{\gamma}_{p\uparrow}^{\ast}\,
\widetilde{\gamma}_{-p\downarrow}^{\ast}\,
\widetilde{\gamma}_{p'\uparrow}^{\ast}\,
\widetilde{\gamma}_{-p'\downarrow}^{\ast}\,\widetilde{\Psi}_{BCS} }$ . For all
$k\in\Lambda$, set
\begin{equation}\label{eq:tildevev}
\left\{ \begin{array}{ll} \displaystyle{
C_{-k\downarrow}C_{k\uparrow}=
\left(\widetilde{\Psi},\, C_{-k\downarrow}C_{k\uparrow}\widetilde{\Psi}\right)
+\widetilde{b}_k \,,} & \\
\noalign{ \vskip0.3cm }
\displaystyle{
C_{k\uparrow}^{\ast}C_{-k\downarrow}^{\ast}=
\left(\widetilde{\Psi},\,C_{k\uparrow}^{\ast}C_{-k\downarrow}^{\ast}
\widetilde{\Psi}\right)+\widetilde{b}_k^{\ast}
\,.} &
\end{array}\right.
\end{equation}
Here, $\widetilde{\Psi}$ is given by \eqref{eq:tildepsi}. The following corresponds to Corollary \ref{crl:bcsexpectationvalue} (b).

\begin{corollary}\label{crl:tildepsi}\quad $\displaystyle{
\widetilde{\Delta}_k=-\sum_{k'\in\Lambda} U_{k,\,k'}\,
\left( \widetilde{\Psi},\,C_{-k'\downarrow}C_{k'\uparrow}\widetilde{\Psi}
\right) }$.
\end{corollary}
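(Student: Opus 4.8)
The plan is to compute $\left( \widetilde{\Psi},\,C_{-k'\downarrow}C_{k'\uparrow}\widetilde{\Psi}\right)$ explicitly and then compare the right-hand side with the new gap equation \eqref{eq:tildegapequation}. Everything proceeds in exact parallel to section 3, but now using the tilde-objects: $\widetilde{\Psi}$ in place of $\Psi$, $\widetilde{\gamma}_{k\sigma}$ in place of $\gamma_{k\sigma}$, and $\widetilde{\theta}_k,\widetilde{\Delta}_k,\widetilde{E}_k$ in place of their untilded counterparts. The key observation is that Lemma \ref{lm:bkphi} (b) was proved purely from Corollary \ref{crl:properties} (a), (b) and the structure of $\Phi$; since $\widetilde{\Phi}$ has exactly the same structure built from the $\widetilde{\gamma}$'s, and Corollary \ref{crl:tildeproperties} gives the same anticommutation relations and the same annihilation property $\widetilde{\gamma}_{k\sigma}\widetilde{\Psi}_{BCS}=0$, the analogue of Lemma \ref{lm:bkphi} (b) holds verbatim with tildes.

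Concretely, first I would record $B_k = \widetilde{C}_k\widetilde{S}_k\bigl(1-\widetilde{\gamma}_{k\uparrow}^{\ast}\widetilde{\gamma}_{k\uparrow}-\widetilde{\gamma}_{-k\downarrow}^{\ast}\widetilde{\gamma}_{-k\downarrow}\bigr)-\widetilde{C}_k^2\,\widetilde{\gamma}_{k\uparrow}\widetilde{\gamma}_{-k\downarrow}-\widetilde{S}_k^2\,\widetilde{\gamma}_{k\uparrow}^{\ast}\widetilde{\gamma}_{-k\downarrow}^{\ast}$, the tilde-version of \eqref{eq:bk}, which follows from Corollary \ref{crl:tildeproperties} (a), (d). Then I would expand
\[
\bigl(\widetilde{\Psi},\,B_{k}\widetilde{\Psi}\bigr)=\frac{1}{\,1+(\widetilde{\Phi},\widetilde{\Phi})\,}\Bigl\{\bigl(\widetilde{\Psi}_{BCS},B_k\widetilde{\Psi}_{BCS}\bigr)+\bigl(\widetilde{\Psi}_{BCS},B_k\widetilde{\Phi}\bigr)+\bigl(\widetilde{\Phi},B_k\widetilde{\Psi}_{BCS}\bigr)+\bigl(\widetilde{\Phi},B_k\widetilde{\Phi}\bigr)\Bigr\}.
\]
By Corollary \ref{crl:tildevevbk} the first term is $\tfrac12\sin2\widetilde{\theta}_k=\widetilde{C}_k\widetilde{S}_k$; the two cross terms vanish because $\widetilde{\Phi}$ lies in the four-$\widetilde{\gamma}$-excitation sector while $B_k\widetilde{\Psi}_{BCS}$ has components only in the zero- and two-excitation sectors (the tilde-analogue of the computation in Lemma \ref{lm:hprimeebcs} (a)); and the last term is the tilde-analogue of Lemma \ref{lm:bkphi} (b), namely $\widetilde{C}_k\widetilde{S}_k\bigl\{(\widetilde{\Phi},\widetilde{\Phi})-2\sum_{p}U_{k,p}^2(\widetilde{C}_k^2\widetilde{S}_p^2+\widetilde{C}_p^2\widetilde{S}_k^2)^2/(\widetilde{E}_k+\widetilde{E}_p)^2\bigr\}$. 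Adding up gives
\[
\bigl(\widetilde{\Psi},\,C_{-k\downarrow}C_{k\uparrow}\widetilde{\Psi}\bigr)=\widetilde{C}_k\widetilde{S}_k\left(1-\frac{2}{\,1+(\widetilde{\Phi},\widetilde{\Phi})\,}\sum_{p\in\Lambda}\frac{U_{k,p}^2\bigl(\widetilde{C}_k^2\widetilde{S}_p^2+\widetilde{C}_p^2\widetilde{S}_k^2\bigr)^2}{\bigl(\widetilde{E}_k+\widetilde{E}_p\bigr)^2}\right).
\]

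Finally I would translate this into the $\xi,\widetilde{\Delta}$ variables: by \eqref{eq:tildetheta}, $\widetilde{C}_k\widetilde{S}_k=\tfrac12\sin2\widetilde{\theta}_k=\widetilde{\Delta}_k/(2\widetilde{E}_k)$, and $\widetilde{C}_k^2\widetilde{S}_p^2+\widetilde{C}_p^2\widetilde{S}_k^2=\tfrac12\bigl(1-\cos2\widetilde{\theta}_k\cos2\widetilde{\theta}_p\bigr)=\tfrac12\bigl(1-\xi_k\xi_p/(\widetilde{E}_k\widetilde{E}_p)\bigr)$ using the product-to-sum identity; hence the $p$-sum is exactly $D_k$ as defined in the statement. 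One then needs to identify the normalization factor: $(\widetilde{\Phi},\widetilde{\Phi})$ computed from Corollary \ref{crl:tildeproperties} (a) equals $\sum_{p,p'}U_{p,p'}^2(\widetilde{C}_p^2\widetilde{S}_{p'}^2+\widetilde{C}_{p'}^2\widetilde{S}_p^2)^2/(\widetilde{E}_p+\widetilde{E}_{p'})^2 = D$, so $1+(\widetilde{\Phi},\widetilde{\Phi})=D+2$ wait—more precisely one checks the constant so that $2/(1+(\widetilde{\Phi},\widetilde{\Phi}))\cdot D_k = 4D_k/(D+2)$, which pins down $(\widetilde{\Phi},\widetilde{\Phi})=D/2$; I would verify this bookkeeping carefully against the definition of $D$ in the statement. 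Substituting, $\bigl(\widetilde{\Psi},\,C_{-k\downarrow}C_{k\uparrow}\widetilde{\Psi}\bigr)=\tfrac{\widetilde{\Delta}_k}{2\widetilde{E}_k}\bigl(1-\tfrac{4D_k}{D+2}\bigr)$, and multiplying by $-U_{k,k'}$ and summing over $k'$ reproduces the right-hand side of \eqref{eq:tildegapequation}, which by hypothesis equals $\widetilde{\Delta}_k$.

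The main obstacle I anticipate is not conceptual but the precise constant-tracking in the normalization: getting the factor in $\bigl(\widetilde{\Phi},\widetilde{\Phi}\bigr)$ and the coefficient $2$ versus $4$ to line up so that $\tfrac{2}{1+(\widetilde{\Phi},\widetilde{\Phi})}D_k$ becomes exactly $\tfrac{4D_k}{D+2}$ as written in \eqref{eq:tildegapequation}. This requires computing $\bigl(\widetilde{\Phi},\widetilde{\Phi}\bigr)$ exactly — using Corollary \ref{crl:tildeproperties} (a) to contract the eight $\widetilde{\gamma}$-operators and Corollary \ref{crl:tildeproperties} (b) to kill the leftover terms — and comparing with the stated definition of $D=\sum_{k'}D_{k'}$; the factor $1/4$ inside $D_{k'}$ combined with the $(\widetilde{C}_k^2\widetilde{S}_p^2+\widetilde{C}_p^2\widetilde{S}_k^2)^2 = \tfrac14(1-\xi_k\xi_p/(\widetilde{E}_k\widetilde{E}_p))^2$ identity must be reconciled, and I would double-check whether a symmetry factor of $2$ from the unordered pair $\{p,p'\}$ enters. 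Everything else is a mechanical repetition of the section-3 arguments with tildes inserted.
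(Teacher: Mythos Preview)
Your proposal is correct and follows essentially the same route as the paper: expand $\bigl(\widetilde{\Psi},B_k\widetilde{\Psi}\bigr)$ into four pieces, kill the cross terms, use Corollary~\ref{crl:tildevevbk} for the diagonal BCS piece and the tilde-analogue of Lemma~\ref{lm:bkphi}(b) for the $\widetilde{\Phi}$ piece, then invoke the new gap equation~\eqref{eq:tildegapequation}. The constant-tracking you flag as the main worry does work out exactly as you suspect: one finds $\bigl(\widetilde{\Phi},\widetilde{\Phi}\bigr)=\tfrac12\sum_{p,p'}U_{p,p'}^2(\widetilde{C}_p^2\widetilde{S}_{p'}^2+\widetilde{C}_{p'}^2\widetilde{S}_p^2)^2/(\widetilde{E}_p+\widetilde{E}_{p'})^2=D/2$ (the factor $1/2$ coming from the unordered-pair symmetry together with the $1/2$ in the definition of $\widetilde{\Phi}$), so $2D_k/\bigl(1+(\widetilde{\Phi},\widetilde{\Phi})\bigr)=4D_k/(D+2)$ on the nose.
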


\begin{proof}\quad By \eqref{eq:tildepsi},
\begin{eqnarray*}
& &\left( \widetilde{\Psi},\,B_k\widetilde{\Psi}\right)\\
&=&\frac{\,\left( \widetilde{\Psi}_{BCS},\,B_k\widetilde{\Psi}_{BCS}\right)+
\left( \widetilde{\Psi}_{BCS},\,B_k\widetilde{\Phi}\right)+
\left( \widetilde{\Phi},\,B_k\widetilde{\Psi}_{BCS}\right)+
\left( \widetilde{\Phi},\,B_k\widetilde{\Phi}\right) \,}
{1+\left( \widetilde{\Phi},\,\widetilde{\Phi} \right)}.
\end{eqnarray*}
An argument similar to that in the proof of Lemma \ref{lm:bkphi} (a) gives
\begin{eqnarray*}
\hspace{-4mm}
B_k\widetilde{\Phi}&=&
(\widetilde{C}_k\,\widetilde{S}_k
-\widetilde{S}_k^2\,\widetilde{\gamma}_{k\uparrow}^{\ast}\widetilde{\gamma}_{-k\downarrow}^{\ast} ) \widetilde{\Phi}
+\widetilde{C}_k^2\sum_{p\in\Lambda} \frac{\,U_{k,\,p}
\left( \widetilde{C}_k^2\,\widetilde{S}_p^2+
 \widetilde{C}_p^2\,\widetilde{S}_k^2 \right)\,}
{\widetilde{E}_k+\widetilde{E}_p}\,
\widetilde{\gamma}_{p\uparrow}^{\ast}\,\widetilde{\gamma}_{-p\downarrow}^{\ast}
\widetilde{\Psi}_{BCS}\\
& &-2\widetilde{C}_k\,\widetilde{S}_k \sum_{p\in\Lambda} \frac{\,U_{k,\,p}
\left( \widetilde{C}_k^2\,\widetilde{S}_p^2
 +\widetilde{C}_p^2\,\widetilde{S}_k^2 \right)\,}
{\widetilde{E}_k+\widetilde{E}_p}\,
\widetilde{\gamma}_{k\uparrow}^{\ast}\,\widetilde{\gamma}_{-k\downarrow}^{\ast}
\widetilde{\gamma}_{p\uparrow}^{\ast}\,\widetilde{\gamma}_{-p\downarrow}^{\ast}
\widetilde{\Psi}_{BCS}\,.
\end{eqnarray*}
Hence, $\displaystyle{\left( \widetilde{\Psi}_{BCS},\,B_k\widetilde{\Phi}\right)=0}$,
\[
\left( \widetilde{\Phi},\,B_k\widetilde{\Phi} \right)
=\widetilde{C}_k\,\widetilde{S}_k\left\{
\left( \widetilde{\Phi},\,\widetilde{\Phi} \right)
-2\sum_{p\in\Lambda} \frac{\,U_{k,\,p}^2
\left( \widetilde{C}_k^2\,\widetilde{S}_p^2
 +\widetilde{C}_p^2\,\widetilde{S}_k^2 \right)^2\,}
{\left( \widetilde{E}_k+\widetilde{E}_p\right)^2} \right\}.
\]
Similarly, $\displaystyle{ \left( \widetilde{\Phi},\,B_k\widetilde{\Psi}_{BCS}
\right)=0}$. The result thus follows from Corollary \ref{crl:tildevevbk} and
\eqref{eq:tildegapequation}.
\end{proof}

Combining Corollary \ref{crl:tildepsi} with \eqref{eq:tildevev} yields the
following.

\begin{lemma} Set
\begin{eqnarray*}
\widetilde{H}_M&=&\sum_{k\in\Lambda,\,\sigma=\uparrow,\,\downarrow}\xi_k\, 
C_{k\,\sigma}^{\ast}C_{k\,\sigma}-\sum_{k\in\Lambda} \widetilde{\Delta}_k
\left( C_{-k\downarrow}C_{k\uparrow}+
C_{k\uparrow}^{\ast}C_{-k\downarrow}^{\ast} \right)\\
& &\quad +\sum_{k\in\Lambda} \widetilde{\Delta}_k
\left(\widetilde{\Psi},\,C_{-k\downarrow}C_{k\uparrow}\widetilde{\Psi}\right).
\end{eqnarray*}
Then the BCS Hamiltonian \eqref{eq:hamiltonian} is rewritten as
$\displaystyle{
H=\widetilde{H}_M+\sum_{k,\,k'\in\Lambda} U_{k,\,k'}\,
\widetilde{b}_{k'}^{\ast} \,\widetilde{b}_k}$ .
\end{lemma}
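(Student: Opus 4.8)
The plan is to mimic the proof of Lemma~\ref{lm:h-hm}: substitute the decomposition \eqref{eq:tildevev} into the quartic interaction term of the BCS Hamiltonian \eqref{eq:hamiltonian}, expand the resulting product, and collect terms. Write $B_k=C_{-k\downarrow}C_{k\uparrow}$, so that $B_k^{\ast}=C_{k\uparrow}^{\ast}C_{-k\downarrow}^{\ast}$ and the interaction term equals $\sum_{k,k'\in\Lambda}U_{k,k'}\,B_{k'}^{\ast}B_k$. Abbreviate $\beta_k=(\widetilde{\Psi},\,B_k\widetilde{\Psi})$; the proof of Corollary~\ref{crl:tildepsi} (via Corollary~\ref{crl:tildevevbk} and the explicit formulas for $(\widetilde{\Phi},\,B_k\widetilde{\Phi})$) exhibits $\beta_k$ as a real number, so \eqref{eq:tildevev} reads $B_k=\beta_k+\widetilde{b}_k$ and $B_k^{\ast}=\beta_k+\widetilde{b}_k^{\ast}$ with the same constant $\beta_k$. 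Expanding $B_{k'}^{\ast}B_k=(\beta_{k'}+\widetilde{b}_{k'}^{\ast})(\beta_k+\widetilde{b}_k)$ produces a $c$-number part $\sum_{k,k'}U_{k,k'}\beta_{k'}\beta_k$, two cross terms $\sum_{k,k'}U_{k,k'}\beta_{k'}\widetilde{b}_k$ and $\sum_{k,k'}U_{k,k'}\widetilde{b}_{k'}^{\ast}\beta_k$, and the quadratic remainder $\sum_{k,k'}U_{k,k'}\widetilde{b}_{k'}^{\ast}\widetilde{b}_k$.

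Next I would apply Corollary~\ref{crl:tildepsi} in the form $\sum_{k'\in\Lambda}U_{k,k'}\beta_{k'}=-\widetilde{\Delta}_k$, together with the symmetry $U_{k,k'}=U_{k',k}$, to the first three sums: the two cross terms collapse to $-\sum_{k}\widetilde{\Delta}_k\widetilde{b}_k$ and $-\sum_{k}\widetilde{\Delta}_k\widetilde{b}_k^{\ast}$, and the $c$-number part to $-\sum_{k}\widetilde{\Delta}_k\beta_k$. I would then use $\widetilde{b}_k+\widetilde{b}_k^{\ast}=(B_k+B_k^{\ast})-2\beta_k$ to rewrite $-\sum_k\widetilde{\Delta}_k(\widetilde{b}_k+\widetilde{b}_k^{\ast})=-\sum_k\widetilde{\Delta}_k(B_k+B_k^{\ast})+2\sum_k\widetilde{\Delta}_k\beta_k$. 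Summing everything, the interaction term becomes $-\sum_k\widetilde{\Delta}_k(B_k+B_k^{\ast})+\sum_k\widetilde{\Delta}_k\beta_k+\sum_{k,k'}U_{k,k'}\widetilde{b}_{k'}^{\ast}\widetilde{b}_k$.

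Finally, adding back the kinetic term $\sum_{k,\sigma}\xi_k C_{k\sigma}^{\ast}C_{k\sigma}$, which \eqref{eq:tildevev} leaves untouched, the first three groups are exactly the definition of $\widetilde{H}_M$ with $\beta_k=(\widetilde{\Psi},\,C_{-k\downarrow}C_{k\uparrow}\widetilde{\Psi})$, so $H=\widetilde{H}_M+\sum_{k,k'}U_{k,k'}\widetilde{b}_{k'}^{\ast}\widetilde{b}_k$. There is no genuine obstacle: the computation has the same structure as Lemma~\ref{lm:h-hm}, with $\Delta_k,\Psi_{BCS},b_k$ replaced by $\widetilde{\Delta}_k,\widetilde{\Psi},\widetilde{b}_k$ and the gap equation replaced by \eqref{eq:tildegapequation}. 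The only steps needing mild care are the index relabeling that uses $U_{k,k'}=U_{k',k}$ in the second cross term, and checking that $(\widetilde{\Psi},\,C_{k\uparrow}^{\ast}C_{-k\downarrow}^{\ast}\widetilde{\Psi})=\beta_k$ rather than merely $\overline{\beta_k}$, which holds because $\beta_k$ is real.
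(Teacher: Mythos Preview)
Your proof is correct and follows exactly the approach the paper indicates: the paper merely states that the lemma follows by combining Corollary~\ref{crl:tildepsi} with \eqref{eq:tildevev}, and you have spelled out precisely that combination. The structure is identical to the (also unwritten) proof of Lemma~\ref{lm:h-hm}, with $\Psi_{BCS},\Delta_k,b_k$ replaced by $\widetilde{\Psi},\widetilde{\Delta}_k,\widetilde{b}_k$.
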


\begin{remark}
The Hamiltonian $\widetilde{H}_M$ as well as $H_M$ (see Lemma \ref{lm:h-hm}) is also the mean field approximation for the BCS Hamiltonian $H$.
\end{remark}

A proof similar to that of Proposition \ref{prp:hm} yields the following.

\begin{proposition}\label{prp:tildehm}
\begin{eqnarray*}
\widetilde{H}_M&=&\sum_{k\in\Lambda,\,\sigma=\uparrow,\,\downarrow}
\sqrt{\,\xi_k^2+\widetilde{\Delta}_k^2\,}\,
\widetilde{\gamma}_{k\,\sigma}^{\ast}\widetilde{\gamma}_{k\,\sigma}\\
& &\quad +\sum_{k\in\Lambda}\left\{ \xi_k-\sqrt{\,\xi_k^2+\widetilde{\Delta}_k^2\,}+\widetilde{\Delta}_k \left( \widetilde{\Psi},\,C_{-k\downarrow}C_{k\uparrow}\widetilde{\Psi} \right) \right\}.
\end{eqnarray*}
\end{proposition}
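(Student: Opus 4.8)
The plan is to mimic exactly the proof of Proposition \ref{prp:hm}, replacing $G_B$, $\theta_k$, $\Delta_k$, $\gamma_{k\,\sigma}$, $E_{BCS}$ by their tilde-counterparts $\widetilde{G}_B$, $\widetilde{\theta}_k$, $\widetilde{\Delta}_k$, $\widetilde{\gamma}_{k\,\sigma}$ throughout. First I would conjugate $\widetilde{H}_M$ by $e^{i\widetilde{G}_B}$: write $\widetilde{H}_M=\sum_{k\in\Lambda}(\xi_k h_k-\widetilde{\Delta}_k v_k)+\sum_{k\in\Lambda}\widetilde{\Delta}_k(\widetilde{\Psi},C_{-k\downarrow}C_{k\uparrow}\widetilde{\Psi})$, using the operators $h_k=C_{k\uparrow}^{\ast}C_{k\uparrow}+C_{-k\downarrow}^{\ast}C_{-k\downarrow}$ and $v_k=C_{-k\downarrow}C_{k\uparrow}+C_{k\uparrow}^{\ast}C_{-k\downarrow}^{\ast}$ from Lemma \ref{lm:hkvk}.

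The key computational input is the analogue of Lemma \ref{lm:hkvk}(b) with $\theta_k$ replaced by $\widetilde{\theta}_k$: since the commutation relations \eqref{eq:cr} involve only the coefficient in front of $v_k$ in the generator, and $\widetilde{G}_B$ has the same algebraic form as $G_B$ (just with $\widetilde{\theta}_k$ in place of $\theta_k$), one obtains
\[
e^{-i\widetilde{G}_B}\left(\xi_k h_k-\widetilde{\Delta}_k v_k\right)e^{i\widetilde{G}_B}
=\left(\xi_k\cos 2\widetilde{\theta}_k+\widetilde{\Delta}_k\sin 2\widetilde{\theta}_k\right)h_k
+\left(\xi_k\sin 2\widetilde{\theta}_k-\widetilde{\Delta}_k\cos 2\widetilde{\theta}_k\right)v_k
+\left(2\xi_k\sin^2\widetilde{\theta}_k-\widetilde{\Delta}_k\sin 2\widetilde{\theta}_k\right).
\]
Now I substitute the defining relations \eqref{eq:tildetheta} for $\sin 2\widetilde{\theta}_k$ and $\cos 2\widetilde{\theta}_k$. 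The coefficient of $h_k$ becomes $(\xi_k^2+\widetilde{\Delta}_k^2)/\sqrt{\xi_k^2+\widetilde{\Delta}_k^2}=\sqrt{\xi_k^2+\widetilde{\Delta}_k^2}$; the coefficient of $v_k$ becomes $(\xi_k\widetilde{\Delta}_k-\widetilde{\Delta}_k\xi_k)/\sqrt{\xi_k^2+\widetilde{\Delta}_k^2}=0$, which is exactly the point of the relation \eqref{eq:tildetheta}; and the scalar term simplifies, using $2\sin^2\widetilde{\theta}_k=1-\cos 2\widetilde{\theta}_k$, to $\xi_k-\sqrt{\xi_k^2+\widetilde{\Delta}_k^2}$. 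Hence
\[
e^{-i\widetilde{G}_B}\widetilde{H}_M e^{i\widetilde{G}_B}
=\sum_{k\in\Lambda}\left\{\sqrt{\xi_k^2+\widetilde{\Delta}_k^2}\,h_k
+\xi_k-\sqrt{\xi_k^2+\widetilde{\Delta}_k^2}
+\widetilde{\Delta}_k\left(\widetilde{\Psi},C_{-k\downarrow}C_{k\uparrow}\widetilde{\Psi}\right)\right\}.
\]
Conjugating back by $e^{-i\widetilde{G}_B}$ on the left and $e^{i\widetilde{G}_B}$ on the right, and using the definition \eqref{eq:tildegamma} of $\widetilde{\gamma}_{k\,\sigma}$ — which gives $e^{i\widetilde{G}_B}h_k e^{-i\widetilde{G}_B}=\widetilde{\gamma}_{k\uparrow}^{\ast}\widetilde{\gamma}_{k\uparrow}+\widetilde{\gamma}_{-k\downarrow}^{\ast}\widetilde{\gamma}_{-k\downarrow}$ — together with $\widetilde{E}_{-k}=\widetilde{E}_k$ to rewrite the sum over both spin labels, yields the claimed formula.

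The only step requiring care, and the one I would flag as the main obstacle, is establishing the displayed conjugation identity for $\xi_k h_k-\widetilde{\Delta}_k v_k$: one must check that the inductive commutator computation of Lemma \ref{lm:hkvk}(a) goes through verbatim with $\theta_k\leadsto\widetilde{\theta}_k$. This is immediate because the relations $[h_k,i\widetilde{G}_B]=2\widetilde{\theta}_k v_k$ and $[v_k,i\widetilde{G}_B]=-2\widetilde{\theta}_k(h_k-1)$ hold by the same Lemma \ref{lm:cacr} calculation that produced \eqref{eq:cr} — the generator's algebraic structure is unchanged — so Lemma \ref{lm:formula} applies and the series sums to the stated exponential conjugate. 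Everything else is the routine bookkeeping already carried out in Section 3, so I would simply state ``A proof similar to that of Proposition \ref{prp:hm} yields the result'' and, if desired, record the one substitution $\xi_k\sin 2\widetilde{\theta}_k-\widetilde{\Delta}_k\cos 2\widetilde{\theta}_k=0$ as the crucial consequence of \eqref{eq:tildetheta}.
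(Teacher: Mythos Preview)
Your proposal is correct and is precisely the approach the paper takes: the paper does not write out a separate proof at all but simply states ``A proof similar to that of Proposition \ref{prp:hm} yields the following,'' and your write-up carries out exactly that substitution $\theta_k\leadsto\widetilde{\theta}_k$, $\Delta_k\leadsto\widetilde{\Delta}_k$, $G_B\leadsto\widetilde{G}_B$ in Lemma \ref{lm:hkvk}(b) followed by \eqref{eq:tildetheta} and \eqref{eq:tildegamma}. There is nothing to add.
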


Proposition \ref{prp:tildehm}, together with Corollary
\ref{crl:tildeproperties} (b), immediately yields the following.

\begin{corollary}\label{crl:tildeground} {\rm (a)}\quad The state
$\widetilde{\Psi}_{BCS}$ is the ground state of $\widetilde{H}_M$, and
the ground state energy $\widetilde{E}_{BCS}$ is given by
\[
\widetilde{E}_{BCS}=\sum_{k\in\Lambda} \left\{
\xi_k-\sqrt{\,\xi_k^2+\widetilde{\Delta}_k^2\,}+
\widetilde{\Delta}_k \left( \widetilde{\Psi},\,C_{-k\downarrow}C_{k\uparrow}
\widetilde{\Psi} \right) \right\}.
\]

\noindent {\rm (b)}\quad Let $\widetilde{E}_{BCS}$ be as in (a). Then the
spectrum of $\widetilde{H}_M$ is given by
\[
\sigma\left( \widetilde{H}_M\right)=\left\{ \sum_{k\in\Lambda}
\sqrt{\xi_k^2+\widetilde{\Delta}_k^2}\,
\left(\,N_{k\uparrow}+N_{k\downarrow}\,\right)+
\widetilde{E}_{BCS} \right\}_{N_{k\uparrow},\, N_{k\downarrow}=0,\,1}.
\]
\end{corollary}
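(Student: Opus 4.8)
The plan is to mimic exactly the proof structure already used for Proposition~\ref{prp:hm} and Corollary~\ref{crl:ground}. First I would observe that the new objects $\widetilde{\theta}_k$, $\widetilde{G}_B$, $\widetilde{\gamma}_{k\sigma}$, $\widetilde{\Psi}_{BCS}$ and $\widetilde{H}_M$ are the literal analogues of $\theta_k$, $G_B$, $\gamma_{k\sigma}$, $\Psi_{BCS}$ and $H_M$ with $\Delta_k$ replaced by $\widetilde{\Delta}_k$; consequently Lemma~\ref{lm:hkvk}, with $\theta_k$ replaced by $\widetilde{\theta}_k$ and $\Delta_k$ by $\widetilde{\Delta}_k$, holds verbatim for $\widetilde{G}_B$ (the computation rests only on Lemma~\ref{lm:cacr} and on the defining relations $[h_k,i\widetilde G_B]=2\widetilde\theta_k v_k$, $[v_k,i\widetilde G_B]=-2\widetilde\theta_k(h_k-1)$, which are formally identical). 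This is what Proposition~\ref{prp:tildehm} already records.

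The substantive step is part~(a). Starting from the expression for $\widetilde{H}_M$ in Proposition~\ref{prp:tildehm}, the second sum is a scalar (a $c$-number times the identity), namely $\widetilde{E}_{BCS}$ as defined in the statement, while the first sum is $\sum_{k,\sigma}\widetilde{E}_k\,\widetilde{\gamma}_{k\sigma}^{\ast}\widetilde{\gamma}_{k\sigma}$, a nonnegative operator because each $\widetilde{\gamma}_{k\sigma}^{\ast}\widetilde{\gamma}_{k\sigma}$ is a nonnegative (orthogonal projection–type) operator with $\widetilde{E}_k=\sqrt{\xi_k^2+\widetilde\Delta_k^2}\ge 0$. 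Hence $\widetilde{H}_M\ge \widetilde{E}_{BCS}$, and by Corollary~\ref{crl:tildeproperties}(b) we have $\widetilde{\gamma}_{k\sigma}\widetilde{\Psi}_{BCS}=0$ for every $k,\sigma$, so $\widetilde{H}_M\widetilde{\Psi}_{BCS}=\widetilde{E}_{BCS}\widetilde{\Psi}_{BCS}$; this identifies $\widetilde{\Psi}_{BCS}$ as the ground state and $\widetilde{E}_{BCS}$ as the ground state energy. For part~(b), I would note that by Corollary~\ref{crl:tildeproperties}(a) the operators $\widetilde\gamma_{k\sigma},\widetilde\gamma_{k\sigma}^{\ast}$ satisfy the same canonical anticommutation relations as $C_{k\sigma},C_{k\sigma}^{\ast}$, and $\widetilde\Psi_{BCS}$ is annihilated by all $\widetilde\gamma_{k\sigma}$; therefore the $2^{2M}$ vectors obtained by applying products $\prod(\widetilde\gamma_{k\uparrow}^{\ast})^{N_{k\uparrow}}(\widetilde\gamma_{-k\downarrow}^{\ast})^{N_{k\downarrow}}$ to $\widetilde\Psi_{BCS}$ form an orthonormal basis of $\mathcal H=\mathbb C^{2^{2M}}$ of eigenvectors of $\widetilde H_M$, with $\widetilde\gamma_{k\sigma}^{\ast}\widetilde\gamma_{k\sigma}$ acting as multiplication by the occupation number $N_{k\sigma}\in\{0,1\}$. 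Reading off eigenvalues from the diagonalized form of $\widetilde H_M$ gives the displayed spectrum.

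I do not expect any genuine obstacle: the only point needing a word of care is that $\widetilde{E}_{BCS}$ is indeed a scalar and not merely notation for a sum of operators — but the second sum in Proposition~\ref{prp:tildehm} manifestly contains no operators (it is a sum of real numbers $\xi_k-\widetilde E_k+\widetilde\Delta_k(\widetilde\Psi,C_{-k\downarrow}C_{k\uparrow}\widetilde\Psi)$), so this is immediate. Thus the proof is a two-line reference to Proposition~\ref{prp:tildehm} and Corollary~\ref{crl:tildeproperties}(b), exactly parallel to the proof of Corollary~\ref{crl:ground}.

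\begin{proof}
This is immediate from Proposition \ref{prp:tildehm} and Corollary \ref{crl:tildeproperties}.

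\noindent (a)\quad By Proposition \ref{prp:tildehm},
\[
\widetilde{H}_M=\sum_{k\in\Lambda,\,\sigma=\uparrow,\,\downarrow}
\sqrt{\,\xi_k^2+\widetilde{\Delta}_k^2\,}\,
\widetilde{\gamma}_{k\,\sigma}^{\ast}\widetilde{\gamma}_{k\,\sigma}
+\widetilde{E}_{BCS}\,,
\]
where $\widetilde{E}_{BCS}$ is the scalar given in the statement. Since each
$\widetilde{\gamma}_{k\,\sigma}^{\ast}\widetilde{\gamma}_{k\,\sigma}$ is a
nonnegative operator and $\sqrt{\,\xi_k^2+\widetilde{\Delta}_k^2\,}\geq 0$, it
follows that $\widetilde{H}_M\geq \widetilde{E}_{BCS}$. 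On the other hand,
Corollary \ref{crl:tildeproperties} (b) gives
$\widetilde{\gamma}_{k\,\sigma}\widetilde{\Psi}_{BCS}=0$ for each $k\in\Lambda$
and each $\sigma=\uparrow,\,\downarrow$, and hence
$\widetilde{H}_M\widetilde{\Psi}_{BCS}=\widetilde{E}_{BCS}\widetilde{\Psi}_{BCS}$.
Therefore $\widetilde{\Psi}_{BCS}$ is the ground state of $\widetilde{H}_M$
and $\widetilde{E}_{BCS}$ is the ground state energy.

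\noindent (b)\quad By Corollary \ref{crl:tildeproperties} (a) the operators
$\widetilde{\gamma}_{k\,\sigma}$ and $\widetilde{\gamma}_{k\,\sigma}^{\ast}$
satisfy the canonical anticommutation relations, and by (b) of that corollary
$\widetilde{\gamma}_{k\,\sigma}\widetilde{\Psi}_{BCS}=0$ for every $k$ and
$\sigma$. Hence the $2^{2M}$ vectors
\[
\left\{ \,\prod_{k\in\Lambda}\left(\widetilde{\gamma}_{k\uparrow}^{\ast}
\right)^{N_{k\uparrow}}\left(\widetilde{\gamma}_{-k\downarrow}^{\ast}
\right)^{N_{k\downarrow}} \,\right\}\widetilde{\Psi}_{BCS}\,,\qquad
N_{k\uparrow},\, N_{k\downarrow}=0,\,1\,,
\]
form an orthonormal basis of $\mathcal{H}=\mathbb{C}^{2^{2M}}$ consisting of
eigenvectors of $\widetilde{H}_M$, on which
$\widetilde{\gamma}_{k\,\uparrow}^{\ast}\widetilde{\gamma}_{k\,\uparrow}$ acts
as multiplication by $N_{k\uparrow}$ and
$\widetilde{\gamma}_{-k\,\downarrow}^{\ast}\widetilde{\gamma}_{-k\,\downarrow}$
as multiplication by $N_{k\downarrow}$. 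Reading off the eigenvalues from the
expression for $\widetilde{H}_M$ in Proposition \ref{prp:tildehm} gives the
claimed spectrum.
\end{proof}
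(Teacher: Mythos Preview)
Your proposal is correct and follows exactly the approach the paper takes: the paper simply says that Proposition~\ref{prp:tildehm} together with Corollary~\ref{crl:tildeproperties}(b) immediately yields the result, precisely in parallel to how Corollary~\ref{crl:ground} was deduced from Proposition~\ref{prp:hm}. Your write-up just makes explicit the details behind that one-line justification.
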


\begin{remark}
We see from Corollary \ref{crl:tildeground} (b) that it takes a finite energy
$\sqrt{\xi_k^2+\widetilde{\Delta}_k^2}$ ($>\widetilde{\Delta}_k$) to excite
a particle from the state $\widetilde{\Psi}_{BCS}$ to an upper energy state. So
$\widetilde{\Delta}_k$ as well as $\Delta_k$ corresponds exactly to the
energy gap, and hence $\widetilde{\Delta}_k$ as well as $\Delta_k$ is the gap
function.
\end{remark}

\begin{remark}
Beginning with our new gap equation \eqref{eq:tildegapequation} we arrive at the results similar to those in section 3, i.e., to those in the BCS-Bogoliubov theory.
\end{remark}

\thebibliography{88}

\bibitem{bcs} J. Bardeen, L. N. Cooper and J. R. Schrieffer, \textit{
Theory of superconductivity}, Phys. Rev. \textbf{108} (1957), 1175--1204.

\bibitem{bogoliubov} N. N. Bogoliubov, \textit{A new method in the theory of superconductivity I}, Soviet Phys. JETP \textbf{34} (1958), 41--46.

\bibitem{delft} J. von Delft, \textit{Superconductivity in ultrasmall metallic grains}, Ann. Phys. \textbf{10} (2001), 1--60.

\bibitem{hubbard} T. Gerisch and A. Rieckers, \textit{Limiting dynamics, KMS-states, and macroscopic phase angle for weakly inhomogeneous BCS-models}, Helv. Phys. Acta \textbf{70} (1997), 727--750.

\bibitem{mattislieb} D. Mattis and E. Lieb, \textit{Exact wave functions in superconductivity}, J. Math. Phys. \textbf{2} (1961), 602--609.

\bibitem{nambujona} Y. Nambu and G. Jona-Lasinio, \textit{Dynamical model of elementary particles based on an analogy with superconductivity}, Phys. Rev. \textbf{122} (1961), 345--358.

\bibitem{richardson} R. W. Richardson, \textit{A restricted class of exact eigenstates of the pairing-force Hamiltonian}, Phys. Lett. \textbf{3} (1963), 277--279.

\endthebibliography

\end{document}